\theoremstyle{plain}
\newtheorem{theorem}{Theorem}[section]
\newtheorem{lemma}[theorem]{Lemma}
\newtheorem*{shorttheoremstatement}{Theorem \ref{shorttheorem}}
\newtheorem{corollary}[theorem]{Corollary}
\newtheorem{proposition}[theorem]{Proposition}
\theoremstyle{definition}
\newtheorem{definition}[theorem]{Definition}
\newtheorem{example}[theorem]{Example}
\newtheorem*{problem}{Problem}
\theoremstyle{remark}
\newtheorem*{remark}{Remark}
\newtheorem*{question}{Question}
\newtheorem*{assumption}{Standing Assumption}
\newcommand{\R}{\ensuremath{\mathbb{R}}}
\newcommand{\Z}{\ensuremath{\mathbb{Z}}}
\newcommand{\Q}{\ensuremath{\mathbb{Q}}}
\newcommand{\C}{\ensuremath{\mathbb{C}}}
\newcommand{\D}{\ensuremath{\mathbf{D}}}
\newcommand{\field}{\ensuremath{K}}
\newcommand{\U}{\ensuremath{\mathbf{U}}}
\newcommand{\F}{\ensuremath{\mathbf{F}}}
\newcommand{\bfa}{\ensuremath{\mathbf{A}}}
\newcommand{\A}{\bfa}
\newcommand{\bfh}{\ensuremath{\mathbf{H}}}
\newcommand{\bfl}{\ensuremath{\mathbf{L}}}
\newcommand{\bft}{\ensuremath{\mathbf{T}}}
\newcommand{\bfs}{\ensuremath{\mathbf{S}}}
\newcommand{\bfn}{\ensuremath{\mathbf{N}}}
\newcommand{\bfg}{\ensuremath{\mathbf{G}}}
\newcommand{\mchn}{\ensuremath{\mathcal{H}^{2n+1}}}
\newcommand{\Id}{\ensuremath{\mathrm{Id}}}
\newcommand{\nn}{\ensuremath{\mathfrak{n}}}
\newcommand{\ahf}{\ensuremath{\mathcal{A}_{\bfh \mid
      \F}}}
\newcommand{\ahu}{\ensuremath{\mathcal{A}_{\bfh \mid
      \U}}}
\newcommand{\as}{\ensuremath{\mathcal{A}_{\bft}^1}}
\newcommand{\incl}{\ensuremath{i}}
\newcommand{\into}{\hookrightarrow}
\providecommand{\cyc}[1]{\left\langle#1\right\rangle}
\DeclareMathOperator{\Mod}{Mod}
\DeclareMathOperator{\Aut}{Aut}
\DeclareMathOperator{\Out}{Out}
\DeclareMathOperator{\Comm}{Comm}
\DeclareMathOperator{\Fitt}{Fitt}
\DeclareMathOperator{\rank}{rank}
\DeclareMathOperator{\Inn}{Inn}
\DeclareMathOperator{\GL}{GL}
\DeclareMathOperator{\SL}{SL}
\DeclareMathOperator{\PSL}{PSL}
\DeclareMathOperator{\PGL}{PGL}
\DeclareMathOperator{\SO}{SO}
\DeclareMathOperator{\SU}{SU}
\DeclareMathOperator{\Sp}{Sp}
\DeclareMathOperator{\GSp}{GSp}
\DeclareMathOperator{\Rad}{Rad}
\DeclareMathOperator{\Nil}{Nil}
\DeclareMathOperator{\rrank}{rank_\R}
\newcommand\restr[2]{{
  \left.\kern-\nulldelimiterspace 
  #1 
  \vphantom{\big|} 
  \right|_{#2} 
  }}
\newcommand\suchthat{ \mathrel{}\middle| \mathrel{}}
\renewcommand{\bold}[1]{\medskip \noindent {\bf #1 }\nopagebreak}
\newcommand{\italics}[1]{\medskip \noindent {\it #1 }\nopagebreak}
\begin{document}

\title{Abstract Commensurators of Lattices in Lie Groups}
\author{Daniel Studenmund} 
\date{\today}
\maketitle

\begin{abstract}
  Let $\Gamma$ be a lattice in a simply-connected solvable Lie
  group. We construct a $\Q$-defined algebraic group $\mathcal{A}$
  such that the abstract commensurator of $\Gamma$ is isomorphic to
  $\mathcal{A}(\Q)$ and $\Aut(\Gamma)$ is commensurable with
  $\mathcal{A}(\Z)$. Our proof uses the algebraic hull construction,
  due to Mostow, to define an algebraic group $\bfh$ so that
  commensurations of $\Gamma$ extend to $\Q$-defined automorphisms of
  $\bfh$. We prove an analogous result for lattices in connected
  linear Lie groups whose semisimple quotient satisfies superrigidity.
\end{abstract}

\tableofcontents

\section{Introduction}

Given a group $\Gamma$, its {\em abstract commensurator}
$\Comm(\Gamma)$ is the set of equivalence classes of isomorphisms
between finite index subgroups of $\Gamma$, where two isomorphisms are
equivalent if they agree on a finite index subgroup of
$\Gamma$. Elements of $\Comm(\Gamma)$ are called {\em commensurations}
of $\Gamma$. The abstract commensurator forms a group under
composition.

The computation of $\Comm(\Gamma)$ is a fundamental
problem. Commensurations play an important role in the study of
rigidity, see e.g. \cite{zimmer} and \cite{margulis}. Commensurations
also arise in classification problems in geometry and topology,
e.g. \cite{neumannreid}, \cite{farbweinberger1},
\cite{farbweinberger2}, \cite{leiningerlongreid}, and \cite{avramidi}.

The structure of $\Comm(\Gamma)$ is often much richer than that of
$\Aut(\Gamma)$. For example, $\Aut(\Z^n) \cong \GL_n(\Z)$ while
$\Comm( \Z^n ) \cong \GL_n(\Q)$. There are a few notable exceptions,
which include the cases that $\Gamma$ is a higher genus mapping class
group, that $\Gamma = \Out(F_n)$ for $n \geq 4$, or that $\Gamma$ is a
nonarithmetic lattice in a semisimple Lie group without compact
factors and not locally isomorphic
to $\PSL_2(\R)$. In these cases, $\Comm(\Gamma)$ is virtually
isomorphic to $\Gamma$; see \cite{ivanov}, \cite{farbhandel}, and
\cite{margulis}, respectively.

This paper is motivated by the following problem.
\begin{problem}
  Let $G$ be a (connected, linear, real) Lie group and let
  $\Gamma \leq G$ be a lattice. Compute $\Comm(\Gamma)$.
\end{problem}

\begin{assumption} Unless otherwise noted, in this paper every Lie
  group is assumed to be real and connected, and to admit a faithful
  continuous linear representation. In particular, semisimple Lie
  groups have finite center.
\end{assumption}

 Every Lie group $G$ satisfies a short exact sequence
\[
1 \to \Rad(G) \to G \to G^{ss} \to 1,
\]
where $\Rad(G)$ is the maximal connected solvable normal Lie subgroup
of $G$, and $G^{ss}$ is semisimple. The study of Lie groups therefore
roughly splits into three pieces: one for solvable groups, one for
semisimple groups, and a final piece to combine the previous two. Our
computation of $\Comm(\Gamma)$ follows this outline.

\bold{Semisimple $G$:} Suppose $G$ is a connected semisimple Lie
group, not locally isomorphic to $\SL_2(\R)$, and $\Gamma \leq G$ is
an irreducible lattice. Then the computation of $\Comm(\Gamma)$ is a
result of work by Borel, Mostow, Prasad, and Margulis. Recall that the
{\em relative commensurator} of $\Gamma$ in $G$ is defined as
\[
\Comm_G(\Gamma) = \left\{ g\in G \suchthat \Gamma \cap g
  \Gamma g^{-1} \text{ is of finite index in }\Gamma \text{ and }g\Gamma
  g^{-1} \right \}.
\]
\begin{itemize}
  [
  leftmargin=\parindent
  ,itemsep=3pt
  ]
\item If $\Gamma$ is abstractly commensurable to $\bfg(\Z)$ for some
  $\Q$-defined, adjoint semisimple algebraic group $\bfg$ with no
  $\Q$-defined normal subgroup $\bfn$ such that $\bfn(\R)$ is compact,
  then $\Comm_\bfg(\Gamma) = \bfg(\Q)$ \cite{boreldensity}. (Such a
  lattice $\Gamma$ is called {\em arithmetic}.) For example, if
  $\Gamma = \PSL_n(\Z)$ for $n\geq 2$, then $\Gamma$ is abstractly
  commensurable with the group $\bfg(\Z)$, where $\bfg$ is the
  semisimple algebraic group $\bfg = \PGL_n$, and so
  $\Comm_\bfg(\Gamma) \cong \bfg(\Q)$; see \S\ref{pslnsection} for
  details.

\item A major theorem of Margulis \cite{margulis} says that $\Gamma$
  is arithmetic if and only if $[ \Comm_G(\Gamma) : \Gamma] = \infty$,
  which occurs if and only if $\Comm_G(\Gamma)$ is dense in $G$.
   
\item If $G$ has no center and no compact factors, then every
  commensuration of $\Gamma$ extends to an automorphism of $G$ by
  Mostow--Prasad--Margulis rigidity \cite{mostowrigidity}.

\item The inner automorphisms of a semisimple real Lie group are
  finite index in the automorphism group. Therefore
  \[
  \Comm(\Gamma) \doteq 
  \begin{cases} 
    \bfg(\Q) & \text{if } \Gamma \text{ is arithmetic} \\
    \Gamma & \text{if } \Gamma \text{ is non-arithmetic}, 
  \end{cases}
  \]
  where $H \doteq K$ if and only if $H$ and $K$ are abstractly
  commensurable, i.e. contain isomorphic finite index subgroups. See
  Theorem \ref{semicomm} for a more precise statement.
\end{itemize}

\begin{remark} In the case $G = \PSL_2(\R)$, every lattice is either
  virtually free or virtually the fundamental group of a closed
  surface. In either case, the abstract commensurator is not
  linear; see Proposition \ref{psl2comm}. The abstract commensurator
  of a surface group has been studied in \cite{odden} and
  \cite{biswasnag}, and may be described as a certain subgroup of the
  mapping class group of the universal 2-dimensional hyperbolic
  solenoid.
\end{remark}

\bold{Solvable $G$:} Suppose $G$ is a connected, simply-connected
solvable real Lie group and $\Gamma \leq G$ is a lattice. In contrast
with the semisimple case, $\Aut(\Gamma)$ is not typically abstractly
commensurable with $\Gamma$. On the other hand, the fact that
$\Aut(\Gamma)$ is commensurable with the $\Z$-points of a $\Q$-defined
algebraic group holds for both arithmetic lattices in higher rank
semisimple groups and lattices in simply-connected solvable groups, at
least on passage to a subgroup of finite index in $\Gamma$; see
\cite[1.12]{bauesgrunewald} and \cite[Ch8]{segalbook}.

In the case that $\Gamma$ is a lattice in a simply-connected nilpotent
group, arithmeticity of $\Aut(\Gamma)$ is a classical result of
Baumslag and Auslander. Merzljakov \cite{merz} showed that
$\Aut(\Gamma)$ embeds in some $\GL_n(\Z)$ for any polycyclic group
$\Gamma$, and this was extended to virtually polycyclic groups by
Wehrfritz \cite{wehrfritz}. For more history and a detailed discussion
of arithmeticity results see
\cite{bauesgrunewald}, whose Theorem 1.3 provides a deeper statement
on the structure of $\Aut(\Gamma)$ for virtually polycyclic $\Gamma$.

This similarity between arithmetic semisimple lattices and solvable
lattices is reflected in their abstract commensurators. For example,
consider $G = \R^n$ and $\Gamma = \Z^n$. Then $\Aut(\Z^n) = \GL_n(\Z)$
is arithmetic in the $\Q$-defined real algebraic group $\Aut(\R^n) =
\GL_n(\R)$, and $\Comm(\Gamma) = \GL_n(\Q)$. Our first main theorem
extends this to lattices in arbitrary simply-connected solvable
groups, following techniques of \cite{bauesgrunewald}.
\begin{theorem} \label{introsolv} Let $\Gamma$ be a lattice in a
  connected, simply-connected solvable Lie group $G$. Then there is
  some $\Q$-defined algebraic group $\mathcal{A}_\Gamma$ such that
  \[
  \Comm(\Gamma) \cong \mathcal{A}_\Gamma(\Q)
  \]
  and the image of $\Aut(\Gamma)$ in $\mathcal{A}_\Gamma(\Q)$ is
  commensurable with $\mathcal{A}_\Gamma(\Z)$.
\end{theorem}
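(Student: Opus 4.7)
The plan is to translate commensurations of $\Gamma$ into $\Q$-defined automorphisms of a single algebraic group via Mostow's algebraic hull. Since $\Gamma$ is a lattice in a simply-connected solvable Lie group, it is polycyclic, and Mostow's construction produces a $\Q$-defined solvable linear algebraic group $\bfh = \bfh_\Gamma$ with unipotent radical $\U$ satisfying: $\Gamma$ embeds as a Zariski-dense subgroup of $\bfh(\Q)$; $\dim \U$ equals the Hirsch length of $\Gamma$; the centralizer of $\Gamma$ in $\bfh$ equals $Z(\bfh)$; and $\bfh / \U$ is a $\Q$-torus. The group $\bfh$ is determined up to $\Q$-isomorphism by the commensurability class of $\Gamma$, and one may arrange a $\Z$-structure on $\bfh$ for which $\Gamma$ is commensurable with $\bfh(\Z)$.

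I would define $\mathcal{A}_\Gamma := \Aut(\bfh)$, the $\Q$-defined algebraic group of algebraic automorphisms of $\bfh$, and construct an isomorphism
\[
\Phi \colon \Comm(\Gamma) \longrightarrow \mathcal{A}_\Gamma(\Q)
\]
as follows. For an isomorphism $\phi\colon \Gamma_1 \to \Gamma_2$ between finite-index subgroups of $\Gamma$, both $\Gamma_i$ have algebraic hull canonically $\Q$-isomorphic to $\bfh$, so $\phi$ extends uniquely to a $\Q$-defined automorphism $\tilde\phi \in \mathcal{A}_\Gamma(\Q)$. Uniqueness of $\tilde\phi$ together with the Zariski density of $\Gamma_1$ implies that equivalent commensurations produce the same extension, so $\Phi$ is well-defined and injective. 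For surjectivity, any $\psi \in \mathcal{A}_\Gamma(\Q)$ sends $\Gamma$ to $\psi(\Gamma)$, which is commensurable with $\Gamma$ since both are commensurable with $\bfh(\Z)$; restricting $\psi$ to $\Gamma \cap \psi^{-1}(\Gamma)$ yields a commensuration that maps to $\psi$ under $\Phi$. For the second assertion, under $\Phi$ the image of $\Aut(\Gamma)$ is exactly the stabilizer of $\Gamma$ in $\mathcal{A}_\Gamma(\Q)$, and since $\Gamma$ is commensurable with $\bfh(\Z)$ this stabilizer is commensurable with the stabilizer of $\bfh(\Z)$, which is $\mathcal{A}_\Gamma(\Z)$.

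The principal obstacle is the rigidity step: showing that every isomorphism between finite-index subgroups of $\Gamma$ extends uniquely to a $\Q$-defined automorphism of $\bfh$. This should reduce to Mostow's rigidity theorem for lattices in simply-connected solvable Lie groups, but care is needed both to reinterpret it on the level of the algebraic hull and to ensure $\Q$-rationality of the extension. A related technical point is producing a $\Z$-structure on $\bfh$ making $\Gamma$ commensurable with $\bfh(\Z)$, which underlies both the surjectivity of $\Phi$ and the identification of $\Aut(\Gamma)$ with an arithmetic subgroup of $\mathcal{A}_\Gamma$.
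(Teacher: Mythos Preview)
Your proposal has a genuine gap in the surjectivity step, and it stems from a false claim: that $\Gamma$ is commensurable with $\bfh(\Z)$. The defining property \ref{commensurability} of the algebraic hull only says that $\Gamma \cap \bfh(\Z)$ has finite index in $\Gamma$, not in $\bfh(\Z)$. In general $\Gamma$ is strictly smaller than $\bfh(\Z)$ up to finite index; the paper explicitly warns of this in the introduction. Concretely, for the \textsc{Sol} lattice $\Gamma = \Z^2 \rtimes \langle\psi\rangle$ the hull is $\bfh \cong \C^3 \rtimes \C^*$, with one unipotent coordinate recording the ``shadow'' of the $\langle\psi\rangle$-factor. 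The $\Q$-defined automorphism of $\bfh$ that scales this extra unipotent coordinate by some $q\in\Q^*\setminus\{\pm 1\}$ does not take $\Gamma$ to a commensurable subgroup: on $\bfh/\F$ it sends the cyclic image of $\Gamma$, generated by $(1,\psi)$, to the group generated by $(q,\psi)$, and these meet only in the identity. So $\Comm(\Gamma)\to\Aut_\Q(\bfh)$ is not surjective, and you cannot take $\mathcal{A}_\Gamma=\Aut(\bfh)$.

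The paper repairs this by defining $\mathcal{A}_\Gamma$ to be the Zariski closure of the image $\xi(\Comm(\Gamma))$ inside $\Aut(\bfh)$, and then proving the nontrivial fact that $\xi(\Comm(\Gamma))=\mathcal{A}_\Gamma(\Q)$. The substance of the argument is Theorem~\ref{commhfdescription}: one identifies a finite-index subgroup $\Comm_{\bfh\mid\F}(\Gamma)$ of $\Comm(\Gamma)$ with the $\Q$-points of the subgroup $\ahf\leq\Aut(\bfh)$ of automorphisms trivial on $\bfh/\F$, using the unipotent shadow and the decomposition $\ahf=\Inn_\F^\bfh\cdot\as$. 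This is exactly the step your sketch glosses over, and it cannot be replaced by the ``both commensurable with $\bfh(\Z)$'' shortcut. The same issue undermines your argument that $\Aut(\Gamma)$ is commensurable with $\mathcal{A}_\Gamma(\Z)$; the paper instead invokes the Baues--Grunewald result that $A_{\Gamma\mid F}$ is commensurable with $\ahf(\Z)$.
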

\begin{remark}
  If $G$ is `sufficiently nice' then $\mathcal{A}_\Gamma(\R) =
  \Aut(G)$. This is proved in Theorem \ref{nilpotentcomm} in the case
  that $G$ is nilpotent. See Proposition \ref{thesisadded1} for a more
  general result.
\end{remark}
\begin{remark}
  Any virtually polycyclic group contains a subgroup of finite index
  that embeds as a lattice in a connected, simply-connected solvable
  Lie group. Therefore Theorem \ref{introsolv} describes
  $\Comm(\Gamma)$ for any virtually polycyclic group $\Gamma$.
\end{remark}

A fundamental difficulty in dealing with lattices in solvable groups
is lack of rigidity; automorphisms of a lattice may not extend to
automorphisms of its ambient Lie group, even virtually. There are a
number of results addressing this to some extent, most notably
\cite{witte}. Instead of applying results providing rigidity in the
ambient Lie group, our proof of Theorem \ref{introsolv} uses methods
developed by Baues and Grunewald in \cite{bauesgrunewald}, following
work of Grunewald and Platonov \cite{grunewaldplatonov,
  grunewaldplatonovrigidity}. 

Our proof utilizes the {\em virtual algebraic hull}, a connected
solvable $\Q$-defined algebraic group $\bfh$ in which $\Gamma$
virtually embeds as a Zariski-dense subgroup. The construction of the
virtual algebraic hull is due to Mostow \cite{mostowrep}. (See
\cite[\S4]{raghunathan} for an alternate construction.) There is a
natural map
\[
\xi : \Comm(\Gamma) \to \Aut(\bfh)
\]
such that $\xi([\phi])$ is $\Q$-defined for each $[\phi] \in
\Comm(\Gamma)$. The automorphism group $\Aut(\bfh)$ naturally has the
structure of a $\Q$-defined algebraic group, and we set
$\mathcal{A}_\Gamma$ equal to the Zariski-closure of $\xi(
\Comm(\Gamma) )$ in $\Aut(\bfh)$. Note that our map $\xi$ extends the
map $\Aut(\tilde \Gamma) \to \Aut(\bfh)$ defined in
\cite[\S4.1]{bauesgrunewald} for some subgroup $\tilde\Gamma \leq
\Gamma$ of finite index.

\begin{remark}
  Baues extends Mostow's algebraic hull construction to certain
  virtually polycyclic groups $\Gamma$ in \cite{bauesinfrasolv}, and
  this hull is applied in \cite{bauesgrunewald} to describe
  $\Aut(\Gamma)$ and $\Out(\Gamma)$. Though our proof of Theorem
  \ref{introsolv} is heavily based on the techniques in
  \cite{bauesgrunewald}, we use only the identity component of the
  algebraic hull. This is because $\Comm(\Gamma)$ only depends on
  $\Gamma$ up to commensurability.
\end{remark}

\begin{remark}
  Though the group $\mathcal{A}_\Gamma$ of Theorem \ref{introsolv} is
  defined abstractly, a finite index subgroup of $\Comm(\Gamma)$ can
  be understood fairly concretely. There is a unique maximal
  normal nilpotent subgroup $\Fitt(\Gamma) \leq \Gamma$. Let $\F$
  denote the Zariski-closure of $\Fitt(\Gamma)$ in $\bfh$. Define
  $\Comm_{\bfh \mid \F}(\Gamma)$ to be the group of commensurations
  trivial on $\Gamma / \Fitt(\Gamma)$. By rigidity of tori,
  $\Comm_{\bfh \mid \F}$ is of finite index in $\Comm(\Gamma)$. The
  group $\Comm_{\bfh \mid \F}$ decomposes as the product of the group
  of commensurations arising from conjugation by elements of $\F(\Q)$
  and the group of commensurations fixing a maximal $\Q$-defined torus
  $\bft \leq \bfh$. See \S\ref{commhfsection} and \S\ref{solvsection}
  for details.
\end{remark}

\bold{General $G$:} When $G$ is not necessarily either semisimple or
solvable, we prove:

\begin{theorem}\label{shorttheorem}
  Suppose $G$ is a connected, linear Lie group with connected,
  simply-connected solvable radical. Suppose $\Gamma \leq G$ is a
  lattice with the property that there is no surjection $\phi: G \to
  H$ to any group $H$ locally isomorphic to any $\SO(1,n)$ or
  $\SU(1,n)$ so that $\phi(\Gamma)$ is a lattice in $H$.  Then
  \begin{enumerate}
  \item $\Gamma$ virtually embeds in the group of $\Q$-points of a
    $\Q$-defined algebraic group $\bfg$ with Zariski-dense image so
    that every commensuration $[\phi] \in \Comm(\Gamma)$ induces a
    unique $\Q$-defined automorphism of $\bfg$ virtually extending
    $\phi$.
  \item There is a $\Q$-defined algebraic group
    $\mathcal{B}$ so that
    \[
    \Comm(\Gamma) \cong \mathcal{B}(\Q)
    \]
    and the image of $\Aut(\Gamma)$ in $\mathcal{B}$ is commensurable
    with $\mathcal{B}(\Z)$.
  \end{enumerate}
\end{theorem}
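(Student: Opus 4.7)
The plan is to reduce to a combination of Theorem \ref{introsolv} with the arithmetic--semisimple theory developed in the introduction, via the short exact sequence
\[
1 \to \Rad(G) \to G \to G^{ss} \to 1.
\]
Write $\Gamma_R = \Gamma \cap \Rad(G)$ and $\bar\Gamma = \Gamma/\Gamma_R$. Because $\Rad(G)$ is simply-connected, the Mostow fibration for lattices ensures $\Gamma_R$ is a lattice in $\Rad(G)$ and $\bar\Gamma$ is a lattice in $G^{ss}$. The hypothesis forbidding surjections of $G$ onto $\SO(1,n)$ or $\SU(1,n)$ type groups in which $\Gamma$ has lattice image is precisely the input needed to invoke Margulis superrigidity for each simple factor into which $\bar\Gamma$ projects densely, yielding arithmeticity of $\bar\Gamma$ inside a $\Q$-defined semisimple algebraic group $\bfs$ with $\Comm(\bar\Gamma) \approx \bfs(\Q)$, and moreover giving $\Q$-rational extensions of representations of $\bar\Gamma$ to $\bfs$.

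First I would apply Theorem \ref{introsolv} to $\Gamma_R$ to obtain a $\Q$-defined solvable algebraic hull $\bfh$ in which (a finite-index subgroup of) $\Gamma_R$ sits Zariski-densely, together with the map $\Comm(\Gamma_R) \to \Aut(\bfh)$ with $\Q$-defined image. The conjugation action of $\bar\Gamma$ on $\Gamma_R$ gives a homomorphism $\bar\Gamma \to \Aut(\bfh)$; by superrigidity this extends $\Q$-rationally to a morphism $\bfs \to \Aut(\bfh)$. One then defines $\bfg := \bfh \rtimes \bfs$, a $\Q$-defined algebraic group into which a finite-index subgroup of $\Gamma$ embeds Zariski-densely via the extension class of $\Gamma$ over $\bar\Gamma$. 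For assertion (1), I would show that any $[\phi] \in \Comm(\Gamma)$ virtually preserves $\Gamma_R$, which follows from an intrinsic characterization of $\Gamma_R$ as (a finite-index subgroup of) the preimage in $\Gamma$ of the maximal normal solvable subgroup of $\bar\Gamma$, itself trivial once we replace $\bar\Gamma$ by a torsion-free finite-index subgroup. Hence $[\phi]$ restricts to a commensuration of $\Gamma_R$ and descends to a commensuration of $\bar\Gamma$; these extend, respectively, to $\Q$-defined automorphisms of $\bfh$ by Theorem \ref{introsolv} and of $\bfs$ by Mostow--Prasad--Margulis rigidity, and the compatibility of the two extensions with the conjugation action assembles into a unique $\Q$-defined automorphism of $\bfg$ virtually extending $\phi$; uniqueness follows from Zariski-density of the image of $\Gamma$.

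For assertion (2), I would define $\mathcal{B}$ as the Zariski closure inside the $\Q$-defined algebraic group $\Aut(\bfg)$ of the image of $\Comm(\Gamma)$ under the map $\xi$ of (1), and then repeat the strategy outlined for Theorem \ref{introsolv}: elements of $\mathcal{B}(\Q)$ preserve a finite-index subgroup of the image of $\Gamma$ in $\bfg(\Q)$, giving a well-defined inverse map $\mathcal{B}(\Q) \to \Comm(\Gamma)$; the image of $\Aut(\Gamma)$ corresponds to those $\Q$-automorphisms preserving the integral lattice in $\bfg$, which is commensurable with $\mathcal{B}(\Z)$ by Borel--Harish-Chandra. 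The main obstacle will be the assembly step: showing that a compatible pair of $\Q$-automorphisms of $\bfh$ and $\bfs$, arising from a single commensuration of $\Gamma$, actually glues into an automorphism of the semidirect product $\bfh \rtimes \bfs$. This requires both the intrinsic characterization of $\Gamma_R$ mentioned above so that commensurations virtually preserve it, and control of the extension class $[\Gamma] \in H^2(\bar\Gamma, \Gamma_R)$, which must be respected up to cohomological triviality after passing to a finite-index subgroup; this last point exploits vanishing/finiteness properties of the relevant $H^2$ for the arithmetic group $\bar\Gamma$ under the given rigidity hypothesis. The remainder of the argument is a diagram chase formally parallel to the solvable case.
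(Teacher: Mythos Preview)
Your overall plan matches the paper's: build $\bfg = \bfh \rtimes \bfs$ from the solvable hull and an arithmetic semisimple group, then extend commensurations. But the assembly step you flag as the obstacle is misdescribed, and the proposed fix via $H^2$ would not work.

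Given a commensuration $\phi$ of $\Gamma$, you correctly obtain $\Phi_R \in \Aut_\Q(\bfh)$ from the restriction to $\Gamma_R$ and $\Phi_S \in \Aut_\Q(\bfs)$ from the induced commensuration of $\bar\Gamma$. The pair $(\Phi_R,\Phi_S)$ is indeed compatible with the conjugation action, so it defines an automorphism $\Phi$ of $\bfh \rtimes \bfs$. But $\Phi$ need not virtually extend $\phi$: the map $\Phi$ sends $\bfs$ to $\bfs$, whereas $\phi$ may send $\Gamma_s = \Gamma \cap \bfs$ to a subgroup not contained in $\bfs$. The discrepancy $s \mapsto \phi(s)\Phi(s)^{-1}$ is a cocycle on (a finite-index subgroup of) $\Gamma_s$ with values in $\bfh$, so the obstruction is an $H^1$ phenomenon, not an $H^2$ one. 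The extension class $[\Gamma] \in H^2(\bar\Gamma,\Gamma_R)$ is automatically preserved because $\phi$ is an isomorphism of the extension itself; what is \emph{not} preserved is the particular splitting $\Gamma_s \hookrightarrow \Gamma$. The remark at the end of \S\ref{gensection} gives exactly the counterexamples when $H^1(\Gamma_s,\Q) \neq 0$.

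The paper's mechanism for correcting this is concrete rather than cohomological: one shows that the Zariski closure of $\phi(\Gamma_s)$ in $\bfg$ is \emph{semisimple} (this uses a vanishing theorem of Starkov, and it is precisely here that the exclusion of $\SO(1,n)$ and $\SU(1,n)$ quotients enters), hence by conjugacy of Levi subgroups (Theorem \ref{levimostow}) there is some $u \in \U(\Q)$ with $\Inn_u \circ \phi$ virtually mapping $\Gamma_s$ into $\bfs$. One must then verify, via the unipotent-shadow machinery (Lemma \ref{shadowaction}), that conjugation by such $u$ is itself a commensuration of $\Gamma$; this is Lemma \ref{unipotentconj} and is not automatic. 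With that in hand, $\Inn_{u^{-1}} \circ \Phi$ virtually extends $\phi$.

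A secondary gap: your claim that $\Aut(\Gamma)$ lands in $\mathcal{B}(\Z)$ ``by Borel--Harish-Chandra'' presumes $\Gamma$ is commensurable with $\bfg(\Z)$, which is false in general since $\Gamma_R$ is much smaller than $\bfh(\Z)$. The paper's Step 4 instead shows that the conjugating element $u$ above can be chosen in a fixed lattice $\Lambda \leq \U(\Q)$ independent of $\phi$ (Lemma \ref{uisnice}), using complete reducibility of the $\Gamma_s$-action on $\uu$.
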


The group $\bfg$ of Theorem \ref{shorttheorem} is, roughly speaking,
constructed as the semidirect product of the virtual algebraic hull
$\bfh$ of the ``solvable part'' of $\Gamma$ and a semisimple group
$\bfs$ such that $\bfs(\Z)$ is commensurable with the ``semisimple
part'' of $\Gamma$.  The technical work comes first in making this
precise, and second in constructing an action of $\bfs$ on $\bfh$
compatible with the group structure of $\Gamma$.

The hypothesis that $\Gamma$ does not surject to a lattice in either
$\SO(1,n)$ or $\SU(1,n)$ is used to apply the superrigidity results of
Margulis and Corlette, which are used to extend commensurations of
$\Gamma$ to automorphisms of $\bfg$. In the case that $\Gamma$
surjects to a non-superrigid lattice, our construction may fail to
produce a candidate group $\bfg$. Even in the presence of such a
candidate group $\bfg$, commensurations do not generally extend to
automorphisms of $\bfg$. Additional commensurations arise from the
nontriviality of $H^1(\Gamma, \Q)$; see the remark at the end of
\S\ref{gensection}.

\begin{remark}
  If $\bfa$ is a $\Q$-defined algebraic group, then there is a natural
  map $\Xi : \Aut_\Q( \bfa ) \to \Comm(\bfa(\Z))$. If $\bfa$ is
  unipotent, or if $\bfa$ is $\Q$-simple, semisimple, and such that
  $\bfa(\R)$ is not compact and has no factor isogenous to
  $\PSL_2(\R)$, then $\Xi$ is injective because $\bfa(\Z)$ is
  Zariski-dense in $\bfa$, and $\Xi$ is surjective because $\bfa(\Z)$
  is strongly rigid in $\bfa$ by results of Malcev and
  Mostow--Prasad--Margulis (see Theorems \ref{nilpotentcomm} and
  \ref{semisimplecomm}). See \cite{grunewaldplatonov} for analogous
  results in the case that $\bfa$ is solvable.

  The difficulty in proving our results comes from the fact that
  lattices in solvable Lie groups need not be commensurable with the
  $\Z$-points of any algebraic group; see \cite{segalbook} for an
  example. When $\Gamma$ is a lattice in a simply-connected solvable
  group, the algebraic hull construction provides an algebraic group
  $\bfh$ so that $\Gamma$ virtually embeds in $\bfh(\Z)$ as a
  Zariski-dense subgroup, but in general the image of this embedding
  may be of infinite index. Despite this, automorphisms of $\bfh$
  extending commensurations of $\Gamma$ may be understood in terms of
  the algebraic structure of $\bfh$.
\end{remark}


\bold{Outline:} We review basic results in the theory of linear
algebraic groups in \S\ref{prelimsection}. We define and review basic
properties of the abstract commensurator in \S\ref{commsection},
including definitions of commensuristic and strongly commensuristic
subgroups.

In \S\ref{nilsection} we prove Theorem \ref{introsolv} for nilpotent
$G$ using classical rigidity of nilpotent lattices. In
\S\ref{hullsection}, we review the basic theory of polycyclic groups
and the definition of the algebraic hull. Our exposition largely
follows \cite{bauesgrunewald}. We define the unipotent shadow and
discuss the algebraic structure of $\Aut(\bfh)$. In
\S\ref{solvsection} we prove Theorem \ref{introsolv}.

In \S\ref{semisection} we review results on commensurations of
lattices in semisimple Lie groups, which are due primarily to Borel,
Mostow, Prasad, and Margulis. In \S\ref{gensection} we combine the
solvable and semisimple cases to prove Theorem \ref{shorttheorem}.


\bold{Acknowledgements:} I am pleased to acknowledge helpful
conversations with Matt Emerton, Wouter van Limbeek, Madhav Nori, John
Sun, Preston Wake, Alex Wright, and Kevin Wortman. I am deeply
grateful to Dave Morris for many helpful conversations and
correspondences about lattices in Lie groups. Benson Farb and Wouter
van Limbeek provided helpful comments on early drafts of this
paper. The anonymous referee provided many comments and suggestions
that significantly improved the exposition of this paper. Above all, I
am immensely thankful to Benson Farb for setting me on my feet and
providing me with support and encouragement as I learned to walk.

\section{Notation and preliminaries} \label{prelimsection}

If $g,h$ are elements of a group, their commutator is written $[g,h] =
g h g^{-1} h^{-1}$. A group $\Gamma$ {\em virtually} has a property
$P$ if there is a finite index subgroup of $\Gamma$ with $P$.  In
particular, if $\Gamma \leq G$, say that a homomorphism $\phi : \Gamma
\to H$ virtually extends to a homomorphism $\Phi : G \to H$ if there
is a finite index subgroup $\Gamma_0 \leq \Gamma$ so that
$\restr{\phi}{\Gamma_0} = \restr{\Phi}{\Gamma_0}$.

\subsection{Algebraic groups} We use the basic theory of linear
algebraic groups. A good general reference is \cite{borelbook}. Our
preliminaries overlap with those in \cite{bauesgrunewald}.

Let $\field \subseteq \C$ be a subfield. A {\em linear algebraic
  group} $\A$ is a subgroup of $\GL_n(\C)$ for some natural number $n$
that is closed in the Zariski topology. An algebraic group $\A$ is
{\em $\field$-defined} if it is closed in the Zariski topology with
closed subsets those defined by polynomials with coefficients in a
subfield $\field$ of $\C$. A $\field$-defined algebraic group is called a
$\field$-group. A $\field$-group is {\em $\field$-simple} if it has no
connected normal $\field$-defined subgroup, and {\em absolutely
  simple} if it has no connected normal subgroup defined over
$\C$. (Such groups are sometimes called ``almost $\field$-simple'' or
``absolutely almost simple,'' respectively.)

If $R$ is a subring of $\C$, then define $\A(R) = \A \cap \GL_n(R)
\subseteq \GL_n(\C)$. If $V$ is a complex vector space with a fixed
basis, then $V(R)$ denotes the collection of $R$-linear combinations
of basis vectors. Every algebraic group has finitely many
Zariski-connected components. The connected component of the identity
$\A^0$ is a finite index subgroup of $\A$.

\begin{proposition}[cf. {\cite[1.3]{borelbook}}] \label{closuredefinition}
  If $\A$ is $\field$-defined and $\Gamma \leq \A(\field)$ is a
  subgroup, then the Zariski-closure of $\Gamma$ is a $\field$-defined
  subgroup.
\end{proposition}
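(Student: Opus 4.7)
The plan is to use a Galois descent argument, which is the standard way to detect $\field$-definedness in characteristic zero. Let $\overline{\Gamma}$ denote the Zariski closure of $\Gamma$ in $\A$. First I would observe that $\overline{\Gamma}$ is automatically a (Zariski-closed) subgroup of $\A$: since left and right multiplication and inversion are regular maps, the closure of a subgroup is again a subgroup. So the only content is in showing $\overline{\Gamma}$ is cut out by polynomials with coefficients in $\field$.

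Next I would let $\Sigma = \Aut(\C/\field)$ act on $\C$ fixing $\field$ pointwise, and let it act on $\C[\GL_n] = \C[x_{ij}, \det^{-1}]$ by acting on coefficients. Because $\A$ is $\field$-defined, the ideal $I(\A) \subseteq \C[\GL_n]$ is $\Sigma$-stable, and passes to an ideal on the coordinate ring $\C[\A]$ that remains $\Sigma$-stable. I would then consider the ideal $I(\Gamma) \subseteq \C[\A]$ of functions vanishing on $\Gamma$, which by definition cuts out $\overline{\Gamma}$. The key computation is that for $\sigma \in \Sigma$, $f \in I(\Gamma)$, and $\gamma \in \Gamma \subseteq \A(\field)$, one has $\sigma(f)(\gamma) = \sigma(f(\sigma^{-1}\gamma)) = \sigma(f(\gamma)) = \sigma(0) = 0$, using that $\gamma$ has $\field$-rational coordinates so is fixed by $\sigma$. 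Hence $I(\Gamma)$ is $\Sigma$-stable.

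Finally, I would invoke the standard Galois descent principle: since $\field \subseteq \C$ has characteristic zero (hence is perfect), a Zariski-closed subset of $\A$ whose ideal is stable under $\Aut(\C/\field)$ is $\field$-defined, i.e.\ its ideal is generated by its intersection with $\field[\A]$. This is precisely the AG~14.4 / \cite[1.3]{borelbook} type statement cited in the proposition.

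I do not expect a serious obstacle here; the only mildly subtle point is the Galois descent step, which fails in positive characteristic but holds over any subfield of $\C$ because every such subfield is perfect. If one prefers to avoid Galois descent, an alternative I could sketch is to note that $\field[\A]$ is Noetherian, so $I(\Gamma) \cap \field[\A]$ is finitely generated by some $f_1, \ldots, f_r$, and then verify using the $\Sigma$-stability of $I(\Gamma)$ that the common vanishing locus of $f_1, \ldots, f_r$ in $\A$ is exactly $\overline{\Gamma}$.
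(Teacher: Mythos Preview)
Your argument is correct and is essentially the standard Galois descent proof given in the cited reference \cite[AG 14.4]{borelbook}. Note that the paper itself supplies no proof of this proposition; it simply records it as a known fact with a citation to Borel's book, so there is nothing further to compare.
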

\begin{proposition}[cf. {\cite[18.3]{borelbook}}] \label{kpointsdense}
  If $\A$ is a connected $\field$-defined algebraic group, then
  $\A(\field)$ is Zariski-dense in $\A$.
\end{proposition}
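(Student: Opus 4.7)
The plan is to prove that $\A$ is \emph{unirational} over $\field$, i.e., to exhibit a dominant $\field$-defined morphism $f : V \to \A$ with $V$ a nonempty Zariski-open subset of some affine space over $\field$. This suffices for the conclusion: since $\field \subseteq \C$ has characteristic zero and is therefore infinite, the $\field$-points of $V$ are Zariski-dense in $V$; the map $f$ sends $V(\field)$ into $\A(\field)$; and because $f$ is dominant, the image $f(V(\field))$ is Zariski-dense in $\A$, forcing $\A(\field)$ itself to be Zariski-dense.

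Unirationality is built up by the standard structural reduction. Handle the solvable case first: a connected solvable $\field$-group $\mathbf{B}$ fits into an exact sequence $1 \to \U \to \mathbf{B} \to \bft \to 1$ with $\U$ the unipotent radical and $\bft$ a torus, both $\field$-defined. In characteristic zero the exponential map is a $\field$-isomorphism of varieties from the Lie algebra of $\U$ onto $\U$, so unipotent $\field$-groups are $\field$-isomorphic to affine space, hence $\field$-rational. Any $\field$-torus $\bft$ splits over a finite Galois extension $\field'/\field$; averaging a $\field'$-splitting $(\mathbf{G}_m)^n \xrightarrow{\sim} \bft$ over $\mathrm{Gal}(\field'/\field)$, or equivalently applying Weil restriction, produces a dominant $\field$-morphism from affine space to $\bft$. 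Combining the torus and unipotent pieces gives unirationality of $\mathbf{B}$ over $\field$. For the general case, fix a Borel subgroup $\mathbf{B}_0 \leq \A$, which is defined over some finite extension $\field'$ of $\field$; the big-cell product map $\U^{-} \times \mathbf{B}_0 \to \A$, where $\U^{-}$ is the opposite unipotent radical, is a birational $\field'$-isomorphism onto an open subset of $\A$, so $\A$ is unirational over $\field'$.

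The main obstacle is precisely the final descent from $\field'$ down to $\field$: unirationality over a finite Galois extension does not formally imply unirationality over the base. The classical resolution, carried out in \cite[18.2--18.3]{borelbook}, combines Galois averaging with the existence of the $\field$-rational point $e \in \A(\field)$ to promote the $\field'$-parametrization to a $\field$-parametrization. This step is the technical heart of the proof and is the one place where characteristic zero is really used; once it is granted, everything else is a straightforward unraveling of the structure theory of connected algebraic groups and the diagonal density argument in the first paragraph.
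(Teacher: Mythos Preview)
The paper gives no proof of this proposition; it is quoted from \cite[18.3]{borelbook} as a standard fact. Your proposal goes beyond the paper by sketching the unirationality argument behind that citation, which is indeed the correct strategy.

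The outline is accurate through the solvable case. In the general case, however, your route diverges from Borel's and the gap you leave is not easily closed. You obtain unirationality over a finite extension $\field'$ via a Borel subgroup and the big cell, then claim that ``Galois averaging with the $\field$-rational point $e$'' promotes this to unirationality over $\field$, deferring the details to \cite[18.2--18.3]{borelbook}. But there is no general Galois-averaging mechanism that descends unirationality for non-abelian groups---the naive product of a parametrization over its Galois conjugates is not $\field$-defined because group multiplication is non-commutative---and this is not what Borel does. His argument in 18.2 avoids descent entirely: after reducing to the reductive case (using perfectness of $\field$ so the unipotent radical is $\field$-defined---so characteristic zero already enters here, contrary to your final remark), he uses the existence of a maximal torus defined over $\field$ and the structure theory of reductive groups to exhibit $\A$ directly as generated by $\field$-defined, $\field$-unirational subgroups. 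Since the step you label ``the technical heart'' is both mis-described and deferred back to the very reference the proposition invokes, the proposal is closer to an annotated pointer to Borel than an independent argument.
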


A {\em homomorphism of algebraic groups} is a group homomorphism that
is also a morphism of the underlying affine algebraic varieties. If both
varieties are $\field$-defined and the variety morphism is defined
over $\field$, then we say that the homomorphism of algebraic groups
is {\em $\field$-defined}. A {\em $\field$-defined isomorphism} is a
$\field$-defined morphism of algebraic groups with an inverse that is
also $\field$-defined. Let $\Aut(\A)$ denote the group of
automorphisms of $\A$ as an algebraic group, and $\Aut_\field(\A)$
denote the group of $\field$-defined automorphisms of $\A$.

Quotients and semi-direct products of $\field$-defined algebraic
groups exist:
\begin{lemma}[cf. {\cite[6.8]{borelbook}}] \label{Qquotient} Suppose
  $G$ is a $\field$-defined algebraic group and $H \leq G$ is a
  normal, closed, $\field$-defined subgroup. Then $G / H$ is a
  $\field$-defined algebraic group, and the quotient map $\pi : G \to
  G/H$ is $\field$-defined.
\end{lemma}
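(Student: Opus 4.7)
The plan is to realize $G/H$ concretely as the image of a $\field$-defined linear representation of $G$ whose kernel is exactly $H$. The key input is Chevalley's theorem, which produces a $\field$-defined representation $\rho_0 : G \to \GL(W)$ and a one-dimensional subspace $L \subseteq W$ defined over $\field$ such that $H$ is precisely the stabilizer of $L$ in $G$.

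First I would use the normality of $H$ to upgrade this ``stabilizer of a line'' description to a ``kernel'' description. The action of $H$ on $L$ defines a $\field$-defined character $\chi : H \to \GL_1$. Because $H$ is normal in $G$, conjugation of $L$ by any $g \in G$ produces another line on which $H$ still acts through $\chi$. Taking an appropriate direct sum or tensor construction with a representation on which $H$ acts through $\chi^{-1}$ (for example, replacing $W$ by $W \otimes W^*$ and a suitable $G$-invariant flag therein) yields a $\field$-defined representation $\rho : G \to \GL(V)$ whose kernel is exactly $H$.

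Next I would define $G/H := \rho(G)$ inside $\GL(V)$. The image of a morphism of algebraic groups is closed, and by Proposition \ref{closuredefinition} the image of the $\field$-points is Zariski-dense in a $\field$-defined algebraic subgroup, so $\rho(G)$ is a $\field$-defined closed subgroup of $\GL(V)$. The quotient map $\pi := \rho : G \to G/H$ is then $\field$-defined by construction, and the first isomorphism theorem identifies the set-theoretic quotient $G/H$ with $\rho(G)$, endowing it uniquely with the structure of a $\field$-defined algebraic group.

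The main obstacle is step two: producing from Chevalley's data a representation whose kernel (and not merely whose stabilizer of a line) equals $H$. This is where normality of $H$ is essential, and where one must track carefully that all the auxiliary constructions — the character $\chi$, its inverse, tensor products, and the choice of invariant subspace — can be carried out over $\field$ rather than only over $\C$. Once this $\field$-defined $\rho$ is in hand, verifying that $\pi$ satisfies the universal property of a quotient, and that the resulting algebraic structure on $\rho(G)$ is independent of the auxiliary choice of $V$, is largely formal.
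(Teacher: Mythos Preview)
The paper does not supply its own proof of this lemma; it is stated with a reference to Borel's book and used as a standard fact, so there is nothing in the paper to compare your argument against directly.

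Your outline is the standard textbook route (and is essentially what one finds in the cited reference): invoke Chevalley's theorem to realize $H$ as the stabilizer of a $\field$-defined line, use normality of $H$ to pass from ``stabilizer of a line'' to ``kernel of a $\field$-defined representation,'' and then take the image. Two small points are worth tightening. First, your description of the upgrading step (``replacing $W$ by $W\otimes W^*$ and a suitable $G$-invariant flag'') is vague; the usual argument passes to the $\chi$-eigenspace $E\subseteq W$ for $H$, observes that $G$ permutes the finitely many $G$-conjugate eigenspaces because $H$ is normal, and then takes a suitable exterior power so that $H$ acts trivially on the resulting line while its stabilizer in $G$ is still exactly $H$. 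Second, your justification that $\rho(G)$ is $\field$-defined via Proposition~\ref{closuredefinition} implicitly uses Zariski-density of $G(\field)$ in $G$, which requires $G$ connected (Proposition~\ref{kpointsdense}); for disconnected $G$ one handles the identity component first and then the finitely many cosets. Neither point is a genuine gap, just places where the sketch would need to be filled in.
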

\begin{lemma}[cf. {\cite[1.11]{borelbook}}] \label{Qproduct} Suppose
  $G$ and $H$ are $\field$-defined algebraic groups. Suppose $G$ acts
  on $H$, and the action map $\alpha : G \times H \to H$ is
  $\field$-defined. Then the semi-direct product $H\rtimes G$
  naturally has the structure of a $\field$-defined algebraic group.
\end{lemma}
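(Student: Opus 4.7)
The plan is to equip the product variety $H \times G$ with the standard semidirect-product operations and check that every structure map is $\field$-defined, then invoke standard theory to realize the result as a closed subgroup of some $\GL_N(\C)$ in a $\field$-defined way.

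First I would take as the underlying variety the $\field$-defined product $H \times G$, and define multiplication and inversion by the usual formulas
\[
(h_1, g_1)(h_2, g_2) = \bigl(h_1 \cdot \alpha(g_1, h_2),\ g_1 g_2\bigr),
\qquad
(h,g)^{-1} = \bigl(\alpha(g^{-1}, h^{-1}),\ g^{-1}\bigr).
\]
The multiplications and inversions of $H$ and of $G$ are $\field$-defined morphisms of varieties because $H$ and $G$ are $\field$-groups, and $\alpha$ is $\field$-defined by hypothesis. Hence both formulas are composites of $\field$-defined morphisms, so multiplication $(H\times G)\times(H\times G)\to H\times G$ and inversion $H\times G\to H\times G$ are $\field$-defined morphisms of varieties. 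The group axioms on these formulas are a direct verification using the fact that $\alpha$ is a group action by automorphisms. This gives $H \rtimes G$ the structure of an affine algebraic group whose structure maps are all $\field$-defined.

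Second, since the definition of an algebraic group in \S\ref{prelimsection} requires a closed subgroup of some $\GL_n(\C)$, I would need to exhibit a faithful $\field$-defined linear embedding of $H \rtimes G$. This is standard: starting from faithful $\field$-rational representations $\rho_H : H \hookrightarrow \GL(V)$ and $\rho_G : G \hookrightarrow \GL(W)$, one uses the action of $G$ on the coordinate ring $\field[H]$ (through $\alpha$) to produce a finite-dimensional $G$-stable $\field$-subspace $V' \subseteq \field[H]$ containing a faithful copy of the regular $H$-representation; the resulting $\field$-rational representation of $H \rtimes G$ on $V' \oplus W$ is then faithful and $\field$-defined, yielding the desired closed embedding into $\GL_N$. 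The only step with any substance is this last one, but it is precisely the content of Borel's general construction cited in \cite[1.11]{borelbook}, and the $\field$-rationality follows immediately from the $\field$-rationality of $\alpha$ and of $\rho_H, \rho_G$. No genuine obstacle arises.
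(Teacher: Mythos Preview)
Your proposal is correct, but note that the paper itself gives no proof of this lemma: it simply states the result with the parenthetical citation to \cite[1.11]{borelbook} and moves on. So there is nothing in the paper to compare against beyond the reference itself. Your argument is essentially the standard one underlying that reference---first building the group structure on the product variety with $\field$-defined operations, then linearizing via a faithful rational representation constructed from the action on the coordinate ring---and it is fine as written.
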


A {\em torus} is an algebraic group isomorphic to $(\C^*)^n$ for some
$n$. Because the automorphism group of a torus is discrete, we have:
\begin{lemma}[cf. {\cite[8.10]{borelbook}}] \label{toririgidity} Let
  $\bft$ be any torus and $\A$ any algebraic group acting on
  $\bft$ by homomorphisms, so that the map $\A \times \bft \to
  \bft$ is a morphism of varieties. Then $\A^0$ acts trivially on
  $\bft$.
\end{lemma}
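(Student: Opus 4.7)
The plan is to exploit the classical fact that the algebraic automorphism group of a torus is discrete, so that any continuous (indeed, morphic) family of automorphisms parametrized by a connected variety must be constant.

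First I would translate the action into a homomorphism. Since the action map $\alpha : \A \times \bft \to \bft$ is a morphism of varieties and $\A$ acts by group homomorphisms of $\bft$, we obtain a map $\rho : \A \to \Aut(\bft)$ sending $a \in \A$ to the automorphism $\rho(a) : t \mapsto \alpha(a,t)$. The key observation is to identify $\Aut(\bft)$ as an algebraic group: if $\bft \cong (\C^*)^n$, then any algebraic group homomorphism $\bft \to \bft$ is determined by its action on the character lattice $X^*(\bft) \cong \Z^n$, so $\Aut(\bft) \cong \GL_n(\Z)$ as an abstract group, and in particular it carries no positive-dimensional algebraic structure: it is a countable discrete set of morphisms.

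Next I would argue that $\rho$ restricted to $\A^0$ has trivial image. The map $\rho$ is a morphism when we think of $\Aut(\bft)$ sitting inside the space of morphisms $\bft \to \bft$; concretely, for a fixed generating set $t_1, \ldots, t_k$ of $\bft$ (or working coordinate-wise), the assignment $a \mapsto \rho(a)(t_i)$ is a morphism $\A \to \bft$. Since $\A^0$ is an irreducible variety, its image under $\rho$ lies in an irreducible subset of $\Aut(\bft)$. But $\Aut(\bft)$ is discrete (countable with no nontrivial connected subvarieties), so the image is a single point. Evaluating at the identity $e \in \A^0$ shows that this point must be $\rho(e) = \Id_\bft$, so $\rho(\A^0) = \{\Id_\bft\}$.

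The only mildly delicate step is making the passage from the morphism $\alpha$ to a morphism into $\Aut(\bft)$ rigorous, but this is standard: one views $\rho(a)$ through its effect on characters. Each character $\chi \in X^*(\bft)$ pulls back under $\rho(a)$ to another character $\rho(a)^* \chi$, and because $X^*(\bft)$ is discrete and the assignment $a \mapsto \rho(a)^*\chi$ is a morphism $\A \to X^*(\bft)$ (valued in a discrete set), it is constant on the connected component $\A^0$. Thus $\rho(a)^*\chi = \chi$ for every $a \in \A^0$ and every $\chi \in X^*(\bft)$, which forces $\rho(a) = \Id_\bft$, completing the proof. The main (and only) conceptual obstacle is the identification $\Aut(\bft) \cong \GL(X^*(\bft))$, which is a cornerstone of the theory of diagonalizable groups and follows from the duality between tori and finitely generated free abelian groups.
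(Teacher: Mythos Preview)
Your argument is correct and is precisely the approach the paper indicates: the paper does not give a formal proof but simply prefaces the lemma with the remark ``Because the automorphism group of a torus is discrete,'' citing \cite[8.10]{borelbook}. Your proposal spells out exactly this reasoning, identifying $\Aut(\bft)$ with $\GL(X^*(\bft)) \cong \GL_n(\Z)$ and using irreducibility of $\A^0$ to force the induced map to be constant, so there is nothing to add.
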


Let $\A$ be a $\field$-defined algebraic group. The {\em unipotent
  radical} $\U_\A$ of $\A$ is the unique maximal closed unipotent
normal subgroup of $\A$. The {\em solvable radical} $\Rad(\A)$ of $\A$
is unique maximal connected closed solvable normal subgroup of
$\A$. Both $\U_\A$ and $\Rad(\A)$ are $\field$-defined subgroups of
$\A$. Say $\A$ is {\em reductive} if $\U_\A$ is trivial, and {\em
  semisimple} if $\Rad(\A)$ is trivial. A {\em Levi subgroup} is a
connected reductive subgroup $\bfl \leq \A$ so that $\A = \U_\A
\rtimes \bfl$.
\begin{theorem}[Mostow, see {\cite[Theorem 2.3]{platonovrapinchuk}}]
  \label{levimostow}
  For any $\field$-defined algebraic group $\A$, there is a
  $\field$-defined Levi subgroup $\bfl$. Moreover, any reductive
  $\field$-defined subgroup is conjugate by an element of
  $\U_\A(\field)$ into $\bfl$.
\end{theorem}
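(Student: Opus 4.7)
The plan is to reduce the theorem to two cohomological vanishing statements for torsors under the unipotent radical $\U_\A$, using essentially that $\field$ has characteristic zero (as $\field \subseteq \C$).

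First I would address existence. The quotient $\pi : \A \to \A / \U_\A$ is $\field$-defined by Lemma \ref{Qquotient}, and a $\field$-defined Levi subgroup is exactly the image of a $\field$-defined section of $\pi$ as algebraic groups. Working over $\C$ first, the classical Mal'cev--Harish-Chandra decomposition at the Lie algebra level gives a splitting $\Lie(\A) = \Lie(\U_\A) \rtimes \mathfrak{l}$ with $\mathfrak{l}$ reductive; integrating yields a $\C$-defined Levi subgroup $\bfl_\C$. The set of all such $\C$-defined sections is a torsor for $\U_\A(\C)$ under conjugation, so the obstruction to finding a $\field$-defined section lies in a pointed Galois cohomology set $H^1(\mathrm{Gal}(\bar \field / \field), \U_\A(\bar \field))$. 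To show this vanishes, I would proceed by induction along the lower central series of $\U_\A$: each successive quotient is a vector group, and $H^1$ of Galois with coefficients in the additive group of $\bar \field$ is trivial (Hilbert 90 for $\mathbb{G}_a$), so the devissage goes through.

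Next I would address conjugacy. Given any reductive $\field$-defined subgroup $\bfl' \leq \A$, the intersection $\bfl' \cap \U_\A$ is simultaneously reductive and unipotent, hence trivial, so $\pi|_{\bfl'}$ is an injective $\field$-defined embedding into $\A/\U_\A$. Over $\C$, the complex version of the conjugacy statement (already known classically) provides some $u \in \U_\A(\C)$ with $u \bfl' u^{-1} \subseteq \bfl$. The set of such conjugating elements is a $\field$-defined subvariety of $\U_\A$ carrying the structure of a torsor under a $\field$-defined unipotent group (a centralizer inside $\U_\A$). The same vanishing of $H^1$ with unipotent coefficients in characteristic zero then provides a $\field$-point, yielding the required element of $\U_\A(\field)$.

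The main obstacle is the cohomological descent: it rests essentially on the characteristic-zero hypothesis, both through the exponential map used to integrate $\mathfrak{l}$ to an algebraic subgroup and through the vanishing of $H^1$ of the additive group, which needs to be propagated along the central series of $\U_\A$. In positive characteristic the analogous statement is false in general, so one cannot avoid the characteristic-zero input at some step; fortunately the standing convention $\field \subseteq \C$ makes this free.
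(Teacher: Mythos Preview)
The paper does not give a proof of this theorem; it is quoted from \cite[Theorem 2.3]{platonovrapinchuk} and used throughout as a black box. So there is no ``paper's proof'' to compare against.

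Your outline is a standard and essentially sound route to the result in characteristic zero. One imprecision worth flagging: the set of Levi subgroups is not literally a torsor for $\U_\A$ under conjugation, since the stabilizer $N_{\U_\A}(\bfl)$ can be nontrivial (and is not generally normal in $\U_\A$, so you cannot simply pass to the quotient group). The usual fix is to run the induction on $\dim \U_\A$ directly rather than packaging everything as a single torsor: replace $\U_\A$ by its quotient by the last term of its lower central series, apply the inductive hypothesis to get a $\field$-defined Levi there, pull back, and then handle the remaining extension by a central vector group using complete reducibility of representations of reductive groups in characteristic zero (equivalently, vanishing of the relevant $H^2$). The conjugacy part has the same wrinkle and the same inductive cure. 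Your invocation of $H^1(\mathrm{Gal}, \mathbb{G}_a) = 0$ along the central series is exactly the ingredient that makes the descent from $\C$ to $\field$ go through at each stage.
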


The following summarizes some standard results concerning solvable
algebraic groups.
\begin{proposition}[cf. {\cite[10.6]{borelbook}}] \label{solvablealgebraic}
  Let $\bfh$ be a $\Q$-defined connected solvable algebraic
  group. Then:
  \begin{enumerate}
    \renewcommand{\theenumi}{SG\arabic{enumi}}
  \item $\U_\bfh$ consists of all unipotent elements of $\bfh$.
  \item $[\bfh, \bfh] \subseteq \U_\bfh$.
  \item There is a $\Q$-defined maximal torus $\bft \leq \bfh$.
  \item Any two maximal $\Q$-defined tori are conjugate by an element of
    $[\bfh, \bfh](\Q)$.
  \item If $\bft$ is a $\Q$-defined maximal torus, then $\bfh$ is a
    semidirect product $\bfh = \U_\bfh \rtimes \bft$.
  \item If $\D$ is the centralizer of a maximal torus and $\F \leq
    \U_\bfh$ is any normal subgroup containing $[\bfh, \bfh]$, then $\bfh =
    \F \cdot \D$.
  \end{enumerate}
\end{proposition}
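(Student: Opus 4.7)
The plan is to deduce the six items in order, using the Lie--Kolchin theorem and Theorem \ref{levimostow} as the main tools.

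Items (1) and (2) are immediate from Lie--Kolchin. After conjugating $\bfh$ into the upper triangular subgroup of some $\GL_n(\C)$, commutators are strictly upper triangular and hence unipotent, proving (2). The set of strictly upper triangular elements is then a $\Q$-defined unipotent normal subgroup of $\bfh$ containing every unipotent element, so by maximality it equals $\U_\bfh$, proving (1).

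For (3), I would apply Theorem \ref{levimostow} to $\bfh$ itself to obtain a $\Q$-defined Levi subgroup $\bfl \leq \bfh$. Since $\bfh$ is solvable, the connected reductive group $\bfl$ is a torus; by the decomposition $\bfh = \U_\bfh \rtimes \bfl$ it has codimension $\dim \U_\bfh$ and is therefore a maximal torus. Once (3) is known, (5) follows quickly for any $\Q$-defined maximal torus $\bft$ by using the conjugacy clause of Theorem \ref{levimostow} to find $u \in \U_\bfh(\Q)$ with $u \bft u^{-1} = \bfl$, so that $\bfh = u^{-1}(\U_\bfh \cdot \bfl)u = \U_\bfh \cdot \bft$, and $\U_\bfh \cap \bft = \{1\}$ (since elements of $\bft$ are semisimple and of $\U_\bfh$ unipotent).

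The main obstacle will be (4), because Theorem \ref{levimostow} supplies $\U_\bfh(\Q)$-conjugacy of $\Q$-defined maximal tori rather than the desired $[\bfh,\bfh](\Q)$-conjugacy. Given $u \in \U_\bfh(\Q)$ with $u \bft_1 u^{-1} = \bft_2$, the strategy is to modify $u$ by an element $v \in C_{\U_\bfh}(\bft_1)(\Q)$, which does not disturb the conjugation, so that $uv^{-1} \in [\bfh,\bfh](\Q)$. The key inputs are that the image of $u$ in the abelian quotient $\U_\bfh / [\bfh,\bfh]$ is automatically $\bft_1$-fixed (since $[\bft_1,\U_\bfh] \subseteq [\bfh,\bfh]$) and that $C_{\U_\bfh}(\bft_1)(\Q)$ surjects onto the $\Q$-points of $\U_\bfh / [\bfh,\bfh]$, using the reductivity of the $\bft_1$-action together with the vanishing of $H^1(\Q, -)$ for unipotent $\Q$-groups.

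Finally, (6) follows from two observations: $\bfh/\F$ is abelian because $[\bfh,\bfh] \subseteq \F$, and $\bft$ acts trivially on $\U_\bfh/\F$ because $[\bft,\U_\bfh] \subseteq [\bfh,\bfh] \subseteq \F$. Hence $C_{\U_\bfh}(\bft)$ surjects onto $\U_\bfh/\F$, while $\bft$ maps isomorphically onto the complementary torus in $\bfh/\F$. Since $\D = C_{\U_\bfh}(\bft) \cdot \bft$ in the semidirect decomposition from (5), $\D$ surjects onto $\bfh/\F$, giving $\bfh = \F \cdot \D$.
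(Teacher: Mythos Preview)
The paper does not supply its own proof of this proposition; it is stated with a citation to Borel's book (the ``cf.\ [10.6]'' in the header), so there is nothing to compare your argument against. Your outline is essentially a correct reconstruction of the standard proof.

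A few small comments. In (1), the Lie--Kolchin conjugation need not be defined over $\Q$, so ``strictly upper triangular'' is not literally a $\Q$-condition; what you are really using is that the set of unipotent elements of $\bfh$ is intrinsic and forms a closed normal subgroup, and that over a perfect field the unipotent radical is defined over that field. In (4) and (6), the crucial step is that $C_{\U_\bfh}(\bft)$ surjects onto $\U_\bfh/\F$ whenever $\bft$ acts trivially on $\U_\bfh/\F$; you invoke ``reductivity of the $\bft$-action,'' which is right in spirit, but the clean statement you are using is the product decomposition $\U_\bfh = C_{\U_\bfh}(\bft) \cdot [\bft,\U_\bfh]$ for a torus acting on a unipotent group (Borel, \S 10). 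With that in hand, $[\bft,\U_\bfh] \subseteq [\bfh,\bfh] \subseteq \F$ gives the surjection immediately, and the $H^1$ vanishing for unipotent groups then handles the $\Q$-point statement in (4). Otherwise the argument is sound.
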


\subsection{Semisimple Lie and algebraic groups} A general reference
for the theory of semisimple algebraic groups used here is
\cite[Chapter 1]{margulis}.

If $\bfa$ is an $\R$-defined algebraic group, then $\bfa(\R)$ is a
real Lie group with finitely many connected components. We always
consider $\bfa(\R)$ with its topology as a Lie group. In particular,
$\bfa(\R)^0$ denotes the connected component of the identity in the
Lie group topology. Every connected semisimple Lie group with trivial
center is of the form $\bfs(\R)^0$ for some $\Q$-defined semisimple
algebraic group $\bfs$; for proof see \cite[3.1.6]{zimmer}.

An {\em isogeny} of algebraic groups is a surjective morphism with
finite kernel. An isogeny is {\em central} if its kernel is central. A
connected semisimple algebraic group $\bfs$ is {\em simply-connected}
if every central isogeny $\Phi : \bfs' \to \bfs$ is an
isomorphism. For every connected $\field$-defined semisimple algebraic
group $\bfs$, there is a unique simply-connected $\field$-defined
semisimple algebraic group $\tilde \bfs$ and central $\field$-defined
isogeny $p : \tilde \bfs \to \bfs$. Every simply-connected semisimple
$\field$-group decomposes uniquely into a product of $K$-simple
simply-connected $\field$-groups.

\begin{proposition}[cf. {\cite[I.2.6.5]{margulis}}] \label{repextension}
  Suppose $\bfa$ is an $\R$-defined algebraic group, and $\bfs$ is a
  simply-connected semisimple $\R$-defined algebraic group. Let $\rho
  : \bfs(\R)^0 \to \bfa(\R)$ be a continuous representation. Then
  $\rho$ extends to an $\R$-defined morphism $\tilde \rho : \bfs \to
  \bfa$.
\end{proposition}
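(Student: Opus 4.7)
The plan is to reduce to the level of complex Lie algebras, integrate using the topological simple-connectedness of $\bfs(\C)$ that follows from algebraic simple-connectedness of $\bfs$, and then upgrade the resulting holomorphic homomorphism to a morphism of algebraic groups.

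First, by Cartan's theorem that continuous homomorphisms between real Lie groups are real-analytic, $\rho$ admits a derivative $d\rho : \Lie(\bfs(\R)^0) \to \Lie(\bfa(\R))$, a real Lie algebra homomorphism. Complexifying gives a $\C$-linear Lie algebra homomorphism $d\rho_\C : \Lie(\bfs) \to \Lie(\bfa)$ that commutes with the Galois involutions induced by the $\R$-structures on $\bfs$ and $\bfa$.

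Next, since $\bfs$ is simply-connected as a semisimple algebraic group, $\bfs(\C)$ is simply-connected as a complex Lie group. Hence $d\rho_\C$ integrates to a unique holomorphic Lie group homomorphism $\tilde\rho_\C : \bfs(\C) \to \bfa(\C)^0$, where the identity component agrees in the Zariski and complex-analytic topologies. The Galois compatibility of $d\rho_\C$ passes through integration by uniqueness, so $\tilde\rho_\C$ is Galois-equivariant; and its restriction to $\bfs(\R)^0$ recovers $\rho$ for the same reason.

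The remaining and principal step is to upgrade $\tilde\rho_\C$ from a holomorphic Lie group homomorphism to a regular morphism of complex algebraic groups. Once algebraicity is established, the Galois equivariance above immediately forces $\tilde\rho := \tilde\rho_\C$ to be $\R$-defined. Algebraicity itself is a classical rigidity fact: after fixing a closed embedding $\bfa \hookrightarrow \GL_n(\C)$, the composition is a holomorphic finite-dimensional representation of the complex semisimple group $\bfs(\C)$, and such representations decompose as direct sums of algebraic irreducible highest-weight representations; this forces $\tilde\rho_\C$ to be algebraic. Establishing this rigidity is the main obstacle in a self-contained argument, though it is standard and is the content of the cited result \cite[I.2.6.5]{margulis}.
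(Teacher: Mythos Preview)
The paper does not give its own proof of this proposition; it is stated in the preliminaries and attributed to \cite[I.2.6.5]{margulis} without further argument. Your outline is correct and is precisely the standard proof one finds in the cited reference: differentiate, complexify, integrate using the fact that algebraic simple-connectedness of $\bfs$ implies topological simple-connectedness of $\bfs(\C)$, and then invoke the classical fact that holomorphic finite-dimensional representations of a complex semisimple group are algebraic, with Galois equivariance yielding the $\R$-structure.
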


A $\Q$-defined semisimple algebraic group $\bfs$ is {\em without
  $\Q$-compact factors} if there is no nontrivial $\Q$-defined
connected normal subgroup $\bfn \leq \bfs$ such that $\bfn(\R)$ is
compact. (This terminology is not standard.) 

\begin{theorem}[Borel Density Theorem
  {\cite{boreldensity}}] \label{boreldensity} Suppose $\bfs$ is a
  connected, $\Q$-defined semisimple algebraic group without $\Q$-compact
  factors. Then $\bfs(\Z)$ is Zariski-dense in $\bfs$.
\end{theorem}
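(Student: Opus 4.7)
The plan is to let $\bfh$ denote the Zariski-closure of $\bfs(\Z)$ in $\bfs$ and show $\bfh = \bfs$. Since $\bfs(\Z) \subseteq \bfs(\Q)$, Proposition \ref{closuredefinition} gives that $\bfh$ is automatically $\Q$-defined. The essential input is the Borel--Harish-Chandra theorem: for any $\Q$-defined semisimple algebraic group, its $\Z$-points form a lattice in its real points.

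First I would show $\bfh$ is normal in $\bfs$. By Chevalley's theorem applied to $\bfh \leq \bfs$, there is a $\Q$-defined representation $\rho : \bfs \to \GL(V)$ and a $\Q$-defined line $L \subset V$ such that $N_\bfs(\bfh)$ coincides with the $\bfs$-stabilizer of $[L] \in \mathbb{P}(V)$. Since $\bfs(\Z) \subseteq \bfh \subseteq N_\bfs(\bfh)$, the lattice $\bfs(\Z)$ fixes $[L]$. Pushing forward the finite $\bfs(\R)$-invariant measure on $\bfs(\R)/\bfs(\Z)$ to the orbit $\bfs(\R) \cdot [L] \subseteq \mathbb{P}(V(\R))$ yields an $\bfs(\R)$-invariant probability measure on this orbit. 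A Furstenberg-type argument then forces every noncompact simple real factor of $\bfs(\R)$ to stabilize $[L]$; combining this with the $\Q$-rationality of the stabilizer condition and the no-$\Q$-compact-factors hypothesis gives $N_\bfs(\bfh) = \bfs$, so $\bfh$ is normal in $\bfs$.

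With $\bfh$ normal in $\bfs$, the connected component $\bfh^0$ is a sub-product of the $\Q$-simple decomposition $\bfs = \prod \bfs_i$: say $\bfh^0 = \prod_{i \in I} \bfs_i$. For any $i \notin I$, the projection of $\bfs(\Z) \subseteq \bfh$ to $\bfs_i$ would land in the finite set $\bfh \cap \bfs_i$; but this projection is commensurable with the lattice $\bfs_i(\Z)$, so $\bfs_i(\R)$ would have to be compact, contradicting the hypothesis. Hence $I$ includes every index, $\bfh = \bfs$, and we are done.

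The main obstacle is the Furstenberg fixed-point step, since the classical Borel density theorem for real Lie groups requires no real compact factors, while the present hypothesis only excludes $\Q$-compact ones. Bridging this gap requires exploiting the $\Q$-structure to promote the fixed-point behavior of the noncompact real factors within each $\Q$-simple piece to the entire $\Q$-simple piece, so that any compact real factors present inside a single $\Q$-simple piece inherit the stabilization condition automatically.
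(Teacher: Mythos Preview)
The paper does not give a proof of this statement; it is simply quoted from Borel's original paper as a background result, with no accompanying argument. There is therefore no paper proof to compare your proposal against.

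That said, your sketch is a reasonable outline of the standard Furstenberg approach to Borel density, and you correctly identify the genuine subtlety: the classical measure-theoretic fixed-point argument on projective space uses absence of compact \emph{real} factors, whereas the hypothesis here only rules out $\Q$-defined compact factors. Your suggested fix---using the $\Q$-structure so that the noncompact real factors inside each $\Q$-simple piece force the whole piece to stabilize $[L]$---is the right idea, and is typically carried out via the observation that the $\Q$-simple factors are restrictions of scalars, so that the real simple factors within one $\Q$-simple piece are Galois-conjugate and hence all fix any $\Q$-defined line once one of them does. One minor imprecision in your final step: the projection of $\bfs(\Z)$ to $\bfs_i$ does not literally land in $\bfh \cap \bfs_i$; rather, you should project modulo $\bfh^0$ and use that $\bfh/\bfh^0$ is finite, together with the fact that the image of $\bfs(\Z)$ under a $\Q$-defined surjection is commensurable with the $\Z$-points of the target.
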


\begin{definition}
  Let $\bfs$ be an $\R$-defined semisimple algebraic group. The {\em
    real rank} of $\bfs$, denoted $\rrank(\bfs)$, is the maximal
  dimension of an abelian $\R$-defined subgroup diagonalizable over
  $\R$. If $S$ is a connected semisimple Lie group with finite center,
  define $\rrank(S)$ to be the real rank of the $\Q$-defined algebraic
  group $\bfs$ satisfying $\bfs(\R)^0 = S / Z(S)$.
\end{definition}

Our results use strong rigidity of Mostow, Prasad, and Margulis, and
superrigidity results of Margulis, Corlette, and Gromov--Schoen. The
following statement is an immediate corollary of
\cite[2.6]{grunewaldplatonovrigidity}.

\begin{theorem}[cf. \cite{grunewaldplatonovrigidity}] 
  \label{semisimplerigidity}
  Suppose $\bfs_1$ and $\bfs_2$ are connected, simply-connected,
  $\Q$-defined, $\Q$-simple semisimple algebraic groups with
  $\rrank(\bfs_1) > 0$ and $\rrank(\bfs_2) > 0$. Suppose $\Gamma_1$
  and $\Gamma_2$ are finite index subgroups of $\bfs_1(\Z)$ and
  $\bfs_2(\Z)$, respectively. Assume that $\bfs_1(\R)^0$ has no simple
  factor locally isomorphic to $\SL_2(\R)$ such that the projection of
  $\Gamma_1 \cap \bfs_1(\R)^0$ into this factor is discrete. Then
  every isomorphism $\Gamma_1 \to \Gamma_2$ virtually extends to a
  $\Q$-defined isomorphism of algebraic groups $\bfs_1 \to \bfs_2$.
\end{theorem}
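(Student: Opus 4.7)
The plan is to extend $\phi$ in three stages: first to a continuous isomorphism of the ambient real Lie groups using strong rigidity; second to an $\R$-defined morphism of algebraic groups using that $\bfs_1$ is simply-connected; and third to recognize this morphism as $\Q$-defined via a Galois-invariance argument.

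For the first stage, the hypothesis on $\bfs_1$ is calibrated precisely so that Mostow--Prasad strong rigidity applies. When $\rrank(\bfs_1) \geq 2$, this follows from Margulis superrigidity; when $\rrank(\bfs_1) = 1$, the groups not excluded by the hypothesis are locally isomorphic to $\SO(1,n)$ with $n \geq 3$, $\SU(1,n)$, $\Sp(n,1)$, or $F_4^{-20}$, the only genuinely excluded case being $\SL_2(\R)$, where lattices are virtually free or surface groups and no such rigidity holds. Strong rigidity then gives, after passing to a finite index subgroup, a continuous isomorphism of the centerfree quotients $\bfs_1(\R)^0/Z(\bfs_1(\R)^0) \to \bfs_2(\R)^0/Z(\bfs_2(\R)^0)$. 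Because $\bfs_1$ and $\bfs_2$ are both simply-connected, this lifts to a continuous isomorphism $\Phi : \bfs_1(\R)^0 \to \bfs_2(\R)^0$ that agrees with $\phi$ on a finite index subgroup of $\Gamma_1$.

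For the second stage, Proposition \ref{repextension} converts $\Phi$ into an $\R$-defined morphism $\tilde\Phi : \bfs_1 \to \bfs_2$; applying the same construction to $\phi^{-1}$ and using uniqueness shows $\tilde\Phi$ is an isomorphism. For the third stage, fix $\sigma \in \operatorname{Gal}(\C/\Q)$ and consider the Galois conjugate $\tilde\Phi^\sigma$, which is again an $\R$-defined morphism $\bfs_1 \to \bfs_2$. On the finite index subgroup where $\tilde\Phi$ agrees with $\phi$, the image lies in $\bfs_2(\Z) \subseteq \GL_n(\Z)$ and is pointwise fixed by $\sigma$, so $\tilde\Phi^\sigma$ and $\tilde\Phi$ coincide there. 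By Borel density (Theorem \ref{boreldensity}), this subgroup is Zariski-dense in $\bfs_1$, hence $\tilde\Phi^\sigma = \tilde\Phi$, and $\tilde\Phi$ is $\Q$-defined.

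The main obstacle is the first stage: uniformly packaging rigidity across the higher-rank case (where Margulis applies) and the rank-one cases (each requiring its own strong rigidity theorem) while carefully tracking what ``virtual extension'' means in each setting, especially in view of the center-lifting ambiguity. This uniform packaging is the content of \cite[Theorem 2.6]{grunewaldplatonovrigidity}, which is why the statement is phrased here as an immediate corollary.
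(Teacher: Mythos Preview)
The paper does not prove this theorem; it is simply stated as an immediate corollary of \cite[2.6]{grunewaldplatonovrigidity} with no argument given. Your proposal is a correct and well-organized expansion of the standard argument underlying that reference: strong rigidity to obtain a continuous isomorphism of the ambient Lie groups, simple-connectedness (via Proposition \ref{repextension}) to promote it to an $\R$-defined algebraic morphism, and Borel density together with Galois invariance on integral points to conclude that the morphism is $\Q$-defined. You also correctly identify and attribute the one nontrivial packaging issue, namely the uniform treatment of higher-rank and rank-one rigidity together with the center-lifting step, to the cited theorem of Grunewald--Platonov.

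One small imprecision worth noting: in your rank-one discussion you describe $\bfs_1(\R)^0$ itself as locally isomorphic to one of the listed rank-one groups, but since $\bfs_1$ is only $\Q$-simple (not absolutely simple), $\bfs_1(\R)^0$ may have additional compact factors coming from restriction of scalars. The correct statement is that the unique \emph{noncompact} simple factor is of the listed form. This does not affect the argument, since compact factors play no role in strong rigidity.
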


\section{The abstract commensurator} \label{commsection}

Let $\Gamma$ be an abstract group. In this section we will define the
abstract commensurator $\Comm(\Gamma)$ and review its basic
properties.

A {\em partial automorphism} of $\Gamma$ is an isomorphism $\phi :
\Gamma_1 \to \Gamma_2$ where $\Gamma_1$ and $\Gamma_2$ are finite
index subgroups of $\Gamma$. Two partial automorphisms $\phi$ and
$\phi'$ of $\Gamma$ are {\em equivalent} if there is some finite index
subgroup $\Gamma_3 \leq \Gamma$ so that $\phi$ and $\phi'$ are both
defined on $\Gamma_3$ and $\restr{\phi}{\Gamma_3} =
\restr{\phi'}{\Gamma_3}$. If $\phi: \Gamma_1\to \Gamma_2$ is a partial
automorphism of $\Gamma$, its equivalence class $[\phi]$ is called a
{\em commensuration} of $\Gamma$. There is a natural composition of
commensurations. If $\phi:\Gamma_1 \to \Gamma_2$ and $\phi' :
\Gamma_1' \to \Gamma_2'$ are partial automorphisms of $\Gamma$, then
we define
\[
\left[\phi'\right]\circ \left[ \phi \right] = \left[ \phi' \circ
  \restr{\phi}{\phi^{-1}(\Gamma_2 \cap \Gamma_1')} \right] .
\]
This definition is independent of choice of representatives of
equivalence classes $[\phi]$ and $[\phi']$.
\begin{definition}
  Given a group $\Gamma$, the {\em abstract commensurator}
  $\Comm(\Gamma)$ is the group of commensurations of $\Gamma$ under
  composition. 
\end{definition}
\begin{example}
  $\Comm(\Z^n) \cong \GL_n(\Q)$
\end{example}

Two subgroups $\Delta_1,\Delta_2 \leq \Gamma$ are {\em
  commensurable} if $[\Delta_1 : \Delta_1 \cap \Delta_2] < \infty$
and $[\Delta_2 : \Delta_1 \cap \Delta_2] < \infty$. Define an
equivalence relation on the set of subgroups of $\Gamma$ by $\Delta_1
\sim \Delta_2$ if and only if $\Delta_1$ and $\Delta_2$ are
commensurable. Let $[\Delta]$ denote the equivalence class of a
subgroup $\Delta \leq \Gamma$ under this relation. The abstract
commensurator $\Comm(\Gamma)$ acts on the set of commensurability
classes of subgroups of $\Gamma$ in an obvious way; given a partial
automorphism $\phi : \Gamma_1 \to \Gamma_2$ of $\Gamma$, define
\[
[\phi] \cdot [\Delta] = [\phi(\Delta \cap \Gamma_1)].
\]
Clearly this is independent of choice of representatives $\phi$ and
$\Delta$.
\begin{definition}[Commensuristic subgroup]
  A subgroup $\Delta \leq \Gamma$ is {\em commensuristic} if $[\phi]
  \cdot [\Delta] = [\Delta]$ for every $[\phi] \in \Comm(\Gamma)$.  A
  subgroup $\Lambda \leq \Gamma$ is {\em strongly commensuristic} if,
  for every partial automorphism $\phi: \Gamma_1 \to \Gamma_2$ of
  $\Gamma$,
  \[
  \phi(\Gamma_1 \cap \Lambda) = \Gamma_2 \cap \Lambda. 
  \]
\end{definition}
  
Every strongly commensuristic subgroup is both characteristic
and commensuristic. Neither converse holds.
\begin{example}
  Consider the group 
  \[
  \Gamma = \left \{ \begin{pmatrix} 0 & x & z \\ 0 & 1 & y \\
      0 & 0 & 1 \end{pmatrix} \suchthat x, y \in \Z \text{ and } z\in
    \frac{1}{2} \Z \right \} \leq \GL_3(\Q) . 
  \]
  Note that $\Gamma$ is a lattice in the real Heisenberg group. Denote
  elements of $\Gamma$ by triples $(x,y,z)$ where $x$, $y$, and $z$
  are as above. The center $Z(\Gamma)$ is infinite cyclic, generated
  by $(0,0,\frac{1}{2})$, and contains the commutator subgroup
  $[\Gamma, \Gamma]$ with index 2. By Proposition
  \ref{centralseriescomm}, the center $Z(\Gamma)$ is strongly
  commensuristic and the commutator subgroup $[\Gamma, \Gamma]$ is
  commensuristic. Further, $[\Gamma, \Gamma]$ is evidently
  characteristic.

  Now consider the subgroup $\Gamma_2 \leq \Gamma$ generated by
  $(2,0,0)$, $(0,2,0)$, and $(0,0,2)$. Then the map $\phi: \Gamma \to
  \Gamma_2$ defined by $\phi(x,y,z) = (2x,2y,4z)$ is a partial
  automorphism of $\Gamma$. But $\phi$ takes $[\Gamma, \Gamma]$ to
  $[\Gamma_2, \Gamma_2]$, which is the infinite cyclic group generated
  by $(0,0,4)$. Therefore $[\Gamma, \Gamma]$ is not strongly
  commensuristic. \qed
\end{example}

\begin{question}Let $\Gamma$ be a finitely generated group. Is every
  characteristic subgroup of $\Gamma$ commensuristic? Is every
  commensuristic subgroup of $\Gamma$ commensurable with a
  characteristic subgroup? Is every commensuristic subgroup of
  $\Gamma$ commensurable with a strongly commensuristic subgroup?
\end{question}
 
The notions of `commensuristic' and `strongly
commensuristic' are motivated by the following lemma.
\begin{lemma} \label{inducedmaps}
  If $\Delta \leq \Gamma$ is commensuristic, then restriction
  induces a homomorphism 
  \[
  \Comm(\Gamma) \to \Comm(\Delta). 
  \]
  If $\Delta$ is normal in $\Gamma$ and strongly commensuristic,
  then there is a homomorphism 
  \[
  \Comm( \Gamma ) \to \Comm( \Gamma / \Delta). 
  \]
\end{lemma}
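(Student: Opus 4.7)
\medskip

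\noindent\textbf{Proof proposal.} The plan is to build both homomorphisms by the same template: pick a representative partial automorphism $\phi:\Gamma_1\to\Gamma_2$ of a class $[\phi]\in\Comm(\Gamma)$, cut it down to the piece that interacts cleanly with $\Delta$, and then check that the resulting equivalence class depends only on $[\phi]$ and that composition is preserved. The subtlety in each case is that the raw restriction of $\phi$ need not land in the expected subgroup on the nose, so one must refine the domain to a finite-index subgroup, and then verify that doing so does not depend on choices.

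For the first statement, let $[\phi]\in\Comm(\Gamma)$ be represented by $\phi:\Gamma_1\to\Gamma_2$. Since $\Delta$ is commensuristic, $[\phi(\Delta\cap\Gamma_1)]=[\Delta]$, so $\phi(\Delta\cap\Gamma_1)\cap\Delta$ has finite index in both $\phi(\Delta\cap\Gamma_1)$ and $\Delta$. Set
\[
\Delta_0 \;:=\; \Delta\cap\Gamma_1\cap\phi^{-1}(\Delta),
\]
which is a finite-index subgroup of $\Delta$, and let $r(\phi):=\restr{\phi}{\Delta_0}:\Delta_0\to\phi(\Delta_0)$; by construction the image is finite-index in $\Delta$, so $r(\phi)$ is a partial automorphism of $\Delta$. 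I would then check: (i) any two such choices $\Delta_0,\Delta_0'$ for the same $\phi$ give equivalent restrictions, since they agree on $\Delta_0\cap\Delta_0'$, still finite-index in $\Delta$; (ii) if $\phi\sim\phi'$ agree on some finite-index $\Gamma_3\leq\Gamma$, then their restrictions agree on $\Delta_0\cap\Delta_0'\cap\Gamma_3$, hence are equivalent in $\Comm(\Delta)$; and (iii) composition is respected because if $\phi'\circ\phi$ is defined on $\phi^{-1}(\Gamma_2\cap\Gamma_1')$, then on an intersection of the relevant $\Delta_0$'s one has $r(\phi')\circ r(\phi) = r(\phi'\circ\phi)$ as partial automorphisms of $\Delta$.

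For the second statement, assume $\Delta\trianglelefteq\Gamma$ and $\Delta$ is strongly commensuristic. Given $\phi:\Gamma_1\to\Gamma_2$, the strong commensuristic condition gives the exact equality $\phi(\Gamma_1\cap\Delta)=\Gamma_2\cap\Delta$. Since $\Delta\cap\Gamma_i$ is normal in $\Gamma_i$ (as $\Delta\trianglelefteq\Gamma$), $\phi$ descends to an isomorphism
\[
\bar\phi\colon \Gamma_1/(\Gamma_1\cap\Delta)\;\longrightarrow\;\Gamma_2/(\Gamma_2\cap\Delta),
\]
and the natural injection $\Gamma_i/(\Gamma_i\cap\Delta)\hookrightarrow\Gamma/\Delta$ identifies each side with a finite-index subgroup of $\Gamma/\Delta$. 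So $\bar\phi$ is a partial automorphism of $\Gamma/\Delta$, defining a map $q([\phi]):=[\bar\phi]$.

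Finally I would verify the formal properties of $q$. Well-definedness on equivalence classes is immediate because if $\phi$ and $\phi'$ agree on $\Gamma_3\leq\Gamma$ of finite index, then $\bar\phi$ and $\bar{\phi'}$ agree on the image of $\Gamma_3$ in $\Gamma/\Delta$, which is still of finite index. Functoriality of passing to quotients then gives $\overline{\phi'\circ\phi}=\bar{\phi'}\circ\bar\phi$ on the appropriate intersection, so $q$ is a homomorphism. No step is really an obstacle; the only place to be careful is in the first part, where one must use the commensuristic hypothesis to ensure $\Delta_0$ is finite-index in $\Delta$ (it is not enough that $\phi(\Delta\cap\Gamma_1)$ merely be contained in, or contain, $\Delta$).
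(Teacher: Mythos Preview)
Your proposal is correct and follows essentially the same approach as the paper's proof. Your subgroup $\Delta_0=\Delta\cap\Gamma_1\cap\phi^{-1}(\Delta)$ is exactly the paper's $\Delta_1=\phi^{-1}(\Delta\cap\phi(\Delta\cap\Gamma_1))$, and your treatment of the quotient map in the strongly commensuristic case is the same as the paper's, with somewhat more explicit verification of well-definedness and compatibility with composition than the paper provides.
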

\begin{proof}
  Suppose $\Delta \leq \Gamma$ is commensuristic. Let $\phi: \Gamma_1
  \to \Gamma_2$ be a partial automorphism of $\Gamma$. Then $\phi(
  \Delta \cap \Gamma_1)$ is commensurable with $\Delta$, and so
  $\Delta_1 = \phi^{-1}(\Delta \cap \phi(\Delta \cap \Gamma_1))$ is a
  finite index subgroup of $\Delta$. The restriction of $\phi$ to
  $\Delta_1$ defines a partial automorphism of $\Delta$. Restriction
  clearly respects the equivalence relation on partial automorphisms
  and is compatible with composition, so this determines a
  well-defined homomorphism $\Comm(\Gamma) \to \Comm(\Delta)$.

  Suppose now that $\Delta \leq \Gamma$ is strongly commensuristic and
  normal, and let $\phi: \Gamma_1 \to \Gamma_2$ be a partial
  automorphism of $\Gamma$. Then $\phi$ descends a map $\hat\phi :
  \Gamma_1 \to \Gamma_2 / (\Gamma_2 \cap \Delta)$. Because $\Delta$ is
  strongly commensuristic, the kernel of this map is precisely
  $\Gamma_1 \cap \Delta$. There is then an isomorphism
  \[
  \phi_* : \Gamma_1 / (\Gamma_1 \cap \Delta) \to \Gamma_2 /
  (\Gamma_2 \cap \Delta). 
  \]
  The map $\phi_*$ is a partial automorphism of $\Gamma / \Delta$. If
  $\phi_1$ and $\phi_2$ are equivalent partial automorphisms, then
  $\hat\phi_1$ and $\hat\phi_2$ agree on some finite index subgroup of
  $\Gamma_1$. It follows that $(\phi_1)_*$ and $(\phi_2)_*$ are
  equivalent partial automorphisms of $\Gamma / \Delta$. Therefore
  there is a well-defined map $\Comm(\Gamma) \to \Comm(\Gamma /
  \Delta)$, which is obviously a homomorphism.
\end{proof}
\begin{remark} Lemma \ref{inducedmaps} is inspired by the methods of
  \cite{leiningermargalit}, where the result is applied with $\Gamma =
  B_n$, the braid group on $n$ strands, and $\Delta = Z(B_n)$ as a
  step in the computation of $\Comm(B_n)$ for $n\geq 4$.
\end{remark}
We will often use the following corollaries implicitly in this
paper. Two groups $\Gamma$ and $\Lambda$ are called {\em abstractly
  commensurable}, written $\Gamma \doteq \Lambda$, if there are finite
index subgroups $\Gamma_1 \leq \Gamma$ and $\Lambda_1 \leq \Lambda$
such that $\Gamma_1 \cong \Lambda_1$.

\begin{corollary}
  If $[ \Gamma : \Gamma' ] < \infty$ then $\Comm(\Gamma')
  \cong \Comm(\Gamma)$. \qed
\end{corollary}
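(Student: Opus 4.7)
The plan is to exhibit mutually inverse homomorphisms between $\Comm(\Gamma)$ and $\Comm(\Gamma')$, both of which are essentially forced by the observation that ``finite index in $\Gamma$'' and ``finite index in $\Gamma'$'' are the same notion.

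First I would observe that $\Gamma'$ is a commensuristic subgroup of $\Gamma$. Indeed, for any partial automorphism $\phi: \Gamma_1 \to \Gamma_2$ of $\Gamma$, the intersection $\Gamma_1 \cap \Gamma'$ has finite index in $\Gamma_1$, so $\phi(\Gamma_1 \cap \Gamma')$ has finite index in $\Gamma_2$ and hence in $\Gamma$; therefore $\phi(\Gamma_1 \cap \Gamma')$ is commensurable with $\Gamma'$. Lemma \ref{inducedmaps} then yields a restriction homomorphism
\[
r : \Comm(\Gamma) \to \Comm(\Gamma').
\]

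Next I would construct a candidate inverse $e: \Comm(\Gamma') \to \Comm(\Gamma)$. Any partial automorphism $\psi: \Delta_1 \to \Delta_2$ of $\Gamma'$ has as its domain and codomain finite-index subgroups of $\Gamma'$, which are automatically finite-index subgroups of $\Gamma$. Thus $\psi$ itself qualifies as a partial automorphism of $\Gamma$, and I would define $e([\psi]) = [\psi]$. To check this is well-defined, note that if $\psi_1$ and $\psi_2$ agree on a finite-index subgroup of $\Gamma'$, then they agree on a finite-index subgroup of $\Gamma$, so they represent the same element of $\Comm(\Gamma)$. Compatibility with composition is immediate from the definition.

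Finally I would verify $r \circ e = \Id$ and $e \circ r = \Id$. For $r \circ e$, given $[\psi] \in \Comm(\Gamma')$ with $\psi: \Delta_1 \to \Delta_2$, the restriction $r(e([\psi]))$ is represented by $\psi$ restricted to $\Delta_1 \cap \Gamma' = \Delta_1$, which equals $\psi$. For $e \circ r$, given $[\phi] \in \Comm(\Gamma)$ with $\phi: \Gamma_1 \to \Gamma_2$, the composition $e(r([\phi]))$ is represented by the restriction of $\phi$ to $\Gamma_1 \cap \Gamma'$, which agrees with $\phi$ on a finite-index subgroup of $\Gamma$ and hence represents $[\phi]$. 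There is no substantive obstacle here; the argument is a bookkeeping exercise once one recognizes that ``finite-index subgroup'' is insensitive to passing to a finite-index overgroup or subgroup.
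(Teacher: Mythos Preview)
Your proof is correct and follows essentially the same approach as the paper. The paper also invokes Lemma~\ref{inducedmaps} to obtain the restriction map $r:\Comm(\Gamma)\to\Comm(\Gamma')$, then argues directly that $r$ is injective (``clearly'') and surjective (since any finite index subgroup of $\Gamma'$ is a finite index subgroup of $\Gamma$); your construction of an explicit inverse $e$ using this same finite-index observation is just an unwinding of that surjectivity claim.
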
 
\begin{corollary} \label{abcomm} If $\Gamma \doteq \Lambda$ then
  $\Comm(\Gamma) \cong \Comm(\Lambda)$. \qed
\end{corollary}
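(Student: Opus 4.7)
The plan is to reduce this to the previous corollary by passing to common finite-index subgroups. By definition of $\doteq$, we may choose finite-index subgroups $\Gamma_1 \leq \Gamma$ and $\Lambda_1 \leq \Lambda$ together with an isomorphism $\psi : \Gamma_1 \to \Lambda_1$. The previous corollary then gives isomorphisms $\Comm(\Gamma) \approx \Comm(\Gamma_1)$ and $\Comm(\Lambda) \approx \Comm(\Lambda_1)$, so it suffices to produce an isomorphism $\Comm(\Gamma_1) \approx \Comm(\Lambda_1)$ from $\psi$.

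For this, I would define $\psi_* : \Comm(\Gamma_1) \to \Comm(\Lambda_1)$ by sending a partial automorphism $\phi : \Delta_1 \to \Delta_2$ of $\Gamma_1$ to the composite $\psi \circ \phi \circ \psi^{-1} : \psi(\Delta_1) \to \psi(\Delta_2)$, which is a partial automorphism of $\Lambda_1$ because $\psi$ is an isomorphism and hence sends finite-index subgroups of $\Gamma_1$ bijectively to finite-index subgroups of $\Lambda_1$. A routine verification shows that $\psi_*$ respects the equivalence relation defining commensurations and is compatible with composition, so it descends to a group homomorphism on $\Comm$. Its inverse is given by the analogous construction using $\psi^{-1}$, so $\psi_*$ is an isomorphism.

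Chaining the three isomorphisms yields $\Comm(\Gamma) \approx \Comm(\Gamma_1) \approx \Comm(\Lambda_1) \approx \Comm(\Lambda)$, as desired. There is no real obstacle here; the only point requiring care is checking that the construction $\phi \mapsto \psi \circ \phi \circ \psi^{-1}$ is well-defined on equivalence classes, which is immediate from the fact that an isomorphism $\psi$ carries any common finite-index subgroup on which two partial automorphisms agree to a common finite-index subgroup on which their conjugates agree.
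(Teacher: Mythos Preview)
Your proof is correct and is exactly the argument the paper has in mind; the paper simply marks the corollary with \qed since it follows immediately from the previous corollary together with the obvious fact that an isomorphism $\psi : \Gamma_1 \to \Lambda_1$ induces an isomorphism $\Comm(\Gamma_1) \approx \Comm(\Lambda_1)$ via conjugation.
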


There is a weaker notion of equivalence similar to that of abstract
commensurability. Define a relation on groups by $\Gamma_1 \sim
\Gamma_2$ if there is a homomorphism $\phi : \Gamma_1 \to \Gamma_2$
with finite index image and finite kernel. Say that $\Gamma_1$ and
$\Gamma_2$ are {\em commensurable up to finite kernels} if they lie in
the same equivalence class of the equivalence relation generated by
$\sim$. In general, groups which are commensurable up to finite
kernels need not be abstractly commensurable.

Recall that a group $\Gamma$ is {\em residually finite} if the
intersection of all finite index subgroups is trivial. It is a theorem
of Malcev that finitely generated linear groups are residually finite.
The following is an easy exercise that will be used in \S7 and \S8; see
\cite{delaharpe} for proof.
\begin{proposition} \label{resfinite}
  Two residually finite groups are abstractly commensurable if and
  only if they are commensurable up to finite kernels.
\end{proposition}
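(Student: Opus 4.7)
The plan is to prove both directions directly, with residual finiteness entering only in the harder direction through the standard device of intersecting a finite kernel with a cofinal family of finite-index subgroups.

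The forward direction requires no residual finiteness. If $\Gamma \doteq \Lambda$ with isomorphic finite-index subgroups $\Gamma_0 \leq \Gamma$ and $\Lambda_0 \leq \Lambda$, then the inclusions $\Gamma_0 \hookrightarrow \Gamma$ and $\Lambda_0 \hookrightarrow \Lambda$ have finite-index image and trivial kernel, while the chosen isomorphism $\Gamma_0 \to \Lambda_0$ trivially has both properties; chaining these three $\sim$-relations exhibits $\Gamma$ and $\Lambda$ as commensurable up to finite kernels.

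The reverse direction reduces to the following key lemma: if $\phi \colon A \to B$ is a homomorphism with finite-index image and finite kernel, and at least one of $A, B$ is residually finite, then both are residually finite and $A \doteq B$. First, residual finiteness is preserved under passage to finite-index subgroups, to quotients by finite normal subgroups, and to finite extensions (the last via the standard core-subgroup argument), so since $A$ is a finite extension of $A/\ker\phi \cong \phi(A)$ and $\phi(A)$ has finite index in $B$, the residual finiteness hypotheses on $A$ and $B$ are equivalent. Granted both, I would enumerate $\ker \phi = \{e, k_1, \ldots, k_m\}$, use residual finiteness of $A$ to choose for each $i$ a finite-index subgroup $N_i \leq A$ with $k_i \notin N_i$, and set $N = \bigcap_i N_i$. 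Then $N$ is finite-index in $A$ with $N \cap \ker \phi = \{e\}$, so $\restr{\phi}{N}$ is injective; since $\phi(N)$ has finite index in $\phi(A)$, which has finite index in $B$, the subgroups $N$ and $\phi(N)$ are isomorphic finite-index subgroups of $A$ and $B$, yielding $A \doteq B$.

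The proposition then follows by transitivity. If $\Gamma$ and $\Lambda$ are residually finite and commensurable up to finite kernels, there is a finite chain $\Gamma = \Lambda_0, \Lambda_1, \ldots, \Lambda_n = \Lambda$ in which consecutive pairs are related by $\sim$ in one direction or the other. The key lemma simultaneously propagates residual finiteness along the chain and gives $\Lambda_i \doteq \Lambda_{i+1}$ at each step, so transitivity of $\doteq$ yields $\Gamma \doteq \Lambda$. The only mildly delicate point is the closure of residual finiteness under finite extensions, needed to ensure that residual finiteness survives passage through the chain; this is classical, and I do not anticipate any serious obstacle.
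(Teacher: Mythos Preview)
The paper does not give its own proof of this proposition; it labels it an easy exercise and cites de~la~Harpe. So the only question is whether your argument is correct.

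Your forward direction is fine. The reverse direction has a genuine gap. In your key lemma you assert that the residual finiteness hypotheses on $A$ and $B$ are equivalent, justifying this by saying ``$A$ is a finite extension of $A/\ker\phi$.'' This conflates two different notions. A \emph{finite extension} in the sense covered by the core argument means a finite-index overgroup, and residual finiteness does pass upward there. But $A/\ker\phi$ is a \emph{quotient} of $A$, not a finite-index subgroup; the relevant statement would be that residual finiteness passes from a quotient back to the total group when the kernel is finite, and this is \emph{false}. Deligne's theorem supplies a counterexample: for $n\ge 2$ there is a central extension $1\to \Z/2\to G\to \Sp_{2n}(\Z)\to 1$ in which the central $\Z/2$ lies in every finite-index subgroup of $G$, so $\Sp_{2n}(\Z)$ is residually finite while $G$ is not.

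This breaks your chain argument. The chain $\Sp_{2n}(\Z)\xleftarrow{\pi} G\xrightarrow{\pi}\Sp_{2n}(\Z)$ has residually finite endpoints but a non--residually finite middle term, so you cannot propagate residual finiteness step by step, and without it your key lemma does not apply at the step where the arrow points \emph{into} a group you already control. The proposition is still true, but a correct proof must do more: for instance, at a ``roof'' $\Lambda_{i-1}\xleftarrow{\phi}\Lambda_i\xrightarrow{\psi}\Lambda_{i+1}$ one can replace $\Lambda_i$ by $\Lambda_i/(\ker\phi\cap\ker\psi)$, which embeds in $(\Lambda_i/\ker\phi)\times(\Lambda_i/\ker\psi)$ and hence inherits residual finiteness once the neighbours do; combining such replacements with propagation from both endpoints lets one shorten the chain. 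This is more delicate than the ``mildly delicate point'' you flagged.
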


\section{Commensurations of lattices in nilpotent
  groups} \label{nilsection}

\subsection{Example: the Heisenberg group}
Consider the $(2n+1)$-dimensional Heisenberg group
\[
\mchn = \left \{
  \begin{pmatrix}
    1 & \mathbf{x} & z \\ 
    0 & I_n & \mathbf{y}^t \\
    0 & 0 & 1
  \end{pmatrix}
  \suchthat \mathbf{x}, \mathbf{y} \in \C^n \text{ and } z \in \C
\right \} \leq \GL_{n+2}(\C).
\] 
Then $N = \mchn(\R)$ is a simply-connected, 2--step nilpotent Lie
group in which $\Gamma = \mchn(\Z)$ is a lattice. Let $Z = Z(N)$
denote the center of $N$; note that $Z \cong \R$ and that $N / Z
\cong \R^{2n}$. The group commutator induces a map $N/Z \to Z$ by
$[\mathbf{x}, \mathbf{y}] = \omega( \mathbf{x}, \mathbf{y} )$,
where $\omega$ is the standard symplectic form on $\R^{2n}$.

Suppose $\phi : \Gamma_1 \to \Gamma_2$ is a partial automorphism of
$\Gamma$. We will see that $\phi(\Gamma_1 \cap Z) = \Gamma_2 \cap Z$,
and so $[\phi]$ induces a commensuration $[ \bar \phi ]$ of $\Gamma /
Z(\Gamma) \cong \Z^{2n}$. The induced map $\bar \phi \in \GL_{2n}(\Q)$
has image in the general symplectic group $\GSp_{2n}(\Q)$, defined as
\[
\GSp_{2n}(\Q) = \left\{ A\in \GL_n(\Q) \suchthat \omega( Au, Av ) =
  \alpha \omega(u,v) \text{ for some } \alpha \in \Q^* \right\} .
\]
In fact the induced map $\Theta : \Comm(\Gamma) \to \GSp_{2n}(\Q)$ is
surjective. Each partial automorphism $\phi: \Gamma_1 \to \Gamma_2$
such that $[\phi] \in \ker(\Theta)$ is trivial on $Z$, hence is
determined by an element of $H^1( \pi(\Gamma_1) , \Z )$, where $\pi:
\Gamma \to \Gamma/(Z\cap \Gamma)$ denotes the natural projection. One
can check that
\[
\ker(\Theta) \cong \varprojlim_{[\Gamma : H] < \infty} H^1( \pi(H),
\Z) \cong H^1( \pi(\Gamma), \Q ) \cong \Q^{2n}.
\]

Therefore $\Comm(\Gamma)$ satisfies the short exact sequence
\[
1 \to \Q^{2n} \to \Comm(\Gamma) \to \GSp_{2n}(\Q) \to 1.
\]
The action of $\GSp_{2n}(\Q)$ on $\Q^{2n}$ is the tensor product of
the dual representation with the 1--dimensional representation $\mu :
\GSp_{2n}(\Q) \to \Q^*$ defined by $\omega( Au, Av ) = \mu(A)
\omega(u,v)$.

\subsection{Commensuristic subgroups}
Lattices in simply-connected nilpotent Lie groups provide a source of
examples of commensuristic and strongly commensuristic
subgroups. Recall that the upper central series $\gamma^i(G)$ and
lower central series $\gamma_i(G)$ of a group $G$ are defined
inductively as follows. Let $\gamma^0(G) = 1$. Suppose that
$\gamma^i(G)$ is a normal subgroup of $G$, and let $\pi : G \to G /
\gamma^i(G)$. Define $\gamma^{i+1}(G) = \pi^{-1}( Z( G / \gamma^i(G) )
)$. Now let $\gamma_0(G) = G$. Supposing $\gamma_i(G)$ is defined, set
$\gamma_{i+1}(G) = [G, \gamma_i(G)]$.
\begin{proposition} \label{centralseriescomm}
  Let $\Gamma \leq N$ be a lattice in a simply-connected nilpotent Lie
  group. The upper central series of $\Gamma$ is strongly
  commensuristic in $\Gamma$. The lower central series of $\Gamma$ is
  commensuristic. 
\end{proposition}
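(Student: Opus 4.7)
The plan is to treat the upper and lower central series separately, using that any lattice in a simply-connected nilpotent Lie group is a finitely generated torsion-free nilpotent group, and in particular has unique roots: if $x^n = y^n$ with $n \geq 1$, then $x = y$ (apply $\log$ in the ambient Lie group).

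For the upper central series, I would proceed by induction on $i$, reducing to the statement that the center of any finitely generated torsion-free nilpotent group is strongly commensuristic. The key algebraic lemma is that $Z(\Gamma') = \Gamma' \cap Z(\Gamma)$ for any finite index $\Gamma' \leq \Gamma$: the inclusion $\supseteq$ is trivial, and for $\subseteq$, given $g \in Z(\Gamma')$ and $h \in \Gamma$ we have $h^n \in \Gamma'$ for some $n \geq 1$, so $(ghg^{-1})^n = g h^n g^{-1} = h^n$, and unique roots give $ghg^{-1} = h$. Since the center is characteristic in each finite index subgroup, a partial automorphism $\phi : \Gamma_1 \to \Gamma_2$ satisfies $\phi(Z(\Gamma_1)) = Z(\Gamma_2)$, which by the lemma translates to $\phi(\Gamma_1 \cap Z(\Gamma)) = \Gamma_2 \cap Z(\Gamma)$, so $Z(\Gamma)$ is strongly commensuristic. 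For the inductive step, $\Gamma / \gamma^i(\Gamma)$ is again torsion-free nilpotent---torsion-freeness is a second application of unique roots, since $g^n$ central gives $(h g h^{-1})^n = g^n$ for all $h$, forcing $hgh^{-1} = g$. Since $\gamma^i(\Gamma)$ is strongly commensuristic and normal by inductive hypothesis, Lemma \ref{inducedmaps} pushes partial automorphisms down to $\Gamma / \gamma^i(\Gamma)$; a short diagram chase combining strong commensuristicity of $\gamma^i(\Gamma)$ in $\Gamma$ with that of $Z(\Gamma / \gamma^i(\Gamma)) = \gamma^{i+1}(\Gamma) / \gamma^i(\Gamma)$ in the quotient then shows $\gamma^{i+1}(\Gamma)$ is strongly commensuristic in $\Gamma$.

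For the lower central series, I would prove the sharper statement that $\gamma_i(\Gamma')$ has finite index in $\gamma_i(\Gamma)$ for every finite index $\Gamma' \leq \Gamma$, from which commensuristicity is immediate: given a partial automorphism $\phi : \Gamma_1 \to \Gamma_2$, the lower central series is characteristic so $\phi(\gamma_i(\Gamma_1)) = \gamma_i(\Gamma_2)$, and both sides are finite index in $\gamma_i(\Gamma)$, making $\phi(\gamma_i(\Gamma) \cap \Gamma_1)$ commensurable with $\gamma_i(\Gamma)$. The finite index claim follows from Malcev's structure theory: if $\Gamma$ is a lattice in a simply-connected nilpotent Lie group $N$, then $\gamma_i(\Gamma)$ is a lattice in the closed connected subgroup $\gamma_i(N)$; since any finite index subgroup of a lattice is again a lattice, applying this to both $\Gamma$ and $\Gamma'$ gives two lattices in $\gamma_i(N)$ with $\gamma_i(\Gamma') \subseteq \gamma_i(\Gamma)$, forcing finite index.

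The main technical point is the unique roots property, which simultaneously furnishes both $Z(\Gamma') = \Gamma' \cap Z(\Gamma)$ and the torsion-freeness of $\Gamma / Z(\Gamma)$ needed for the induction; everything else reduces to Malcev's structure theory or a routine diagram chase. Note that strong commensuristicity of the lower central series fails in general, as the Heisenberg example in the preceding section illustrates via $\phi(x,y,z) = (2x, 2y, 4z)$, which is why only the weaker conclusion is claimed for $\gamma_i$.
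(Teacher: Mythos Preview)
Your proof is correct. For the lower central series you and the paper argue the same way, via Malcev's fact that $\gamma_i(\Gamma)$ is a lattice in $\gamma_i(N)$. For the upper central series the approaches diverge: the paper proves $\gamma^k(\Gamma) = \Gamma \cap \gamma^k(N)$ by an induction whose key step is Zariski density of the image of $\Gamma$ in $N/\gamma^{k-1}(N)$ (an element centralizing a Zariski-dense subset centralizes everything), and then deduces $\gamma^k(\Gamma_j) = \Gamma_j \cap \gamma^k(\Gamma)$ for $j=1,2$. You instead establish $Z(\Gamma') = \Gamma' \cap Z(\Gamma)$ directly from unique roots (an element commuting with every $h^n$ commutes with every $h$) and induct through the torsion-free nilpotent quotients $\Gamma/\gamma^i(\Gamma)$. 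Your route is more elementary and purely group-theoretic, avoiding the algebraic-geometric characterization of lattices; the paper's route has the advantage of yielding the relation $\gamma^k(\Gamma) = \Gamma \cap \gamma^k(N)$ between the central series of $\Gamma$ and of $N$, which is of independent interest.
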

\begin{proof}
  A discrete subgroup $\Delta \leq N$ is a lattice in $N$ if and only
  if $\Delta$ is Zariski-dense in some (equivalently, any) faithful
  unipotent representation of $N$ into $\GL_n(\R)$; see
  \cite{raghunathan} for a proof. Using this it is easy to show by
  induction that $\gamma^k(\Gamma) = \Gamma \cap \gamma^k(N)$ for all
  $k$. Now suppose $\phi : \Gamma_1 \to \Gamma_2$ is a partial
  automorphism of $\Gamma$. Both $\Gamma_1$ and $\Gamma_2$ are
  lattices in $N$, so $\gamma^k(\Gamma_j) = \Gamma_j \cap \gamma^k(N)$
  for $j=1,2$. It follows that
  \[
  \gamma^k(\Gamma_j) = \Gamma_j \cap \gamma^k(\Gamma) \text{ for }
  j=1,2.
  \]
  Clearly, $\phi( \gamma^k(\Gamma_1) ) = \gamma^k(\Gamma_2)$ for all
  $k$, from which it follows that $\gamma^k(\Gamma)$ is strongly
  commensuristic for all $k$.

  Consider the lower central series $\gamma_k(\Gamma)$. Then
  $\gamma_k(\Gamma)$ is Zariski-dense in $\gamma_k(N)$ for all $k$ by
  \cite[2.4]{borelbook}. Now suppose $\phi: \Gamma_1 \to \Gamma_2$ is
  a partial automorphism of $\Gamma$. Then $\gamma_k(\Gamma_j)$ is a
  lattice in $\gamma_k(N)$ for all $k$ for $j=1,2$. Since
  $\gamma_k(\Gamma_j) \leq \gamma_k( \Gamma )$, it follows that
  $\gamma_k(\Gamma_j) \leq \gamma_k( \Gamma)$ is of finite index for all
  $k$ for $j=1,2$. Since $\phi$ clearly takes $\gamma_k(\Gamma_1)$ to
  $\gamma_k(\Gamma_2)$ for all $k$, it follows that $[\phi] \cdot
  [\gamma_k(\Gamma)] = [\gamma_k(\Gamma)]$ for all $k$.
\end{proof}

\subsection{Commensurations are rational}
Let $N$ be a simply-connected nilpotent Lie group containing a lattice
$\Gamma$. Let $\nn$ denote the Lie algebra of $N$. Then $\nn$ admits
rational structure constants by \cite[2.12]{raghunathan}. It follows
that $\nn$ admits a basis, unique up to $\Q$-defined isomorphism, so
that $\log(\Gamma) \subseteq \nn(\Q)$. Further, $\Aut(\nn)$ is
identified with $\A(\R)$ for a $\Q$-defined algebraic subgroup $\A
\leq \GL(\nn \otimes \C)$ unique up to $\Q$-defined isomorphism. It is
a standard fact of Lie theory that the exponential map identifies
$\Aut(N)$ with $\Aut(\nn)$. This identifies $\Aut(N)$ with the real
points of a $\Q$-defined algebraic group $\A$. By abuse of notation,
we write $\Aut(N)(\Q)$ for the subgroup of $\Aut(N)$ corresponding to
$\A(\Q)$. The group $\Aut(N)(\Q)$ depends only on $N$ and $\Gamma$.

\begin{theorem} \label{nilpotentcomm} Let $\Gamma \leq N$ be a lattice
  in a simply-connected nilpotent Lie group. Identify $\Aut(N)$ with
  the real points of a $\Q$-defined algebraic group as above. Then
  there is an isomorphism
  \[
  \xi : \Comm(\Gamma) \to \Aut(N)(\Q).
  \]
\end{theorem}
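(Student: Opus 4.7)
The plan is to define $\xi$ via Mal'cev rigidity and then verify that the resulting Lie-group automorphisms are precisely the $\Q$-defined ones. Given $[\phi]\in\Comm(\Gamma)$ with representative $\phi:\Gamma_1\to\Gamma_2$, both $\Gamma_1$ and $\Gamma_2$ are finite-index subgroups of $\Gamma$ and hence are themselves lattices in $N$. Mal'cev's rigidity theorem produces a unique Lie-group automorphism $\Phi:N\to N$ extending $\phi$, and I set $\xi([\phi]):=\Phi$. This assignment descends to equivalence classes: two representatives of $[\phi]$ agree on a common finite-index $\Gamma_0\le\Gamma_1$, itself a lattice in $N$, and the uniqueness clause of Mal'cev rigidity forces the two Lie-group extensions to coincide. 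Uniqueness of the extension also yields that $\xi$ respects composition, so it is a homomorphism.

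Next I verify that the image lies in $\Aut(N)(\Q)$. Under the identification $\Aut(N)\cong\Aut(\nn)$ via $\exp$, the differential $d\Phi:\nn\to\nn$ satisfies $d\Phi(\log\Gamma_1)=\log\Gamma_2$. Since $\Gamma_j$ has finite index in $\Gamma$ for $j=1,2$, each $\log\Gamma_j$ $\Q$-spans $\nn(\Q)$. Hence $d\Phi$ preserves $\nn(\Q)$, so $d\Phi\in\A(\Q)$ and $\Phi\in\Aut(N)(\Q)$. Injectivity of $\xi$ follows from the same $\Q$-spanning property: if $\Phi$ restricts to the identity on some finite-index $\Gamma_1$, then $d\Phi$ is the identity on the $\R$-spanning set $\log\Gamma_1$, hence on all of $\nn$, so $\Phi=\Id$ and $[\phi]$ is trivial.

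For surjectivity, suppose $\Psi\in\Aut(N)(\Q)$. Then $\Psi(\Gamma)$ is a lattice in $N$, and because $\Psi$ preserves $\nn(\Q)$, the logarithm $\log\Psi(\Gamma)$ $\Q$-spans the same subspace $\nn(\Q)$ as $\log\Gamma$. By the standard commensurability criterion for lattices in simply-connected nilpotent Lie groups (two lattices are commensurable iff they determine the same $\Q$-structure on $\nn$, see \cite{raghunathan}), $\Gamma$ and $\Psi(\Gamma)$ are commensurable. Thus $\Gamma\cap\Psi(\Gamma)$ has finite index in both, and the restriction of $\Psi$ to $\Psi^{-1}(\Gamma\cap\Psi(\Gamma))$ is a partial automorphism of $\Gamma$ whose class maps to $\Psi$ under $\xi$. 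Combined with the previous steps, this gives an isomorphism $\Comm(\Gamma)\to\Aut(N)(\Q)$.

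The main obstacle is the surjectivity step, specifically the claim that $\Psi\in\Aut(N)(\Q)$ automatically sends $\Gamma$ to a commensurable subgroup. While it is transparent that $\Psi(\Gamma)$ lies in $\exp(\nn(\Q))$, a direct attempt to bound the denominators of Mal'cev coordinates fails because Baker--Campbell--Hausdorff can increase denominators under repeated multiplication in class greater than one. The correct route is to abstract away from coordinates and use Mal'cev's commensurability criterion for nilpotent lattices; the other steps (well-definedness, homomorphism, injectivity, rationality of the image) reduce to Mal'cev rigidity together with the fact that $\log$ of a finite-index subgroup of $\Gamma$ still $\Q$-spans $\nn(\Q)$.
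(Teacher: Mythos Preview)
Your proof is correct and follows essentially the same approach as the paper: both define $\xi$ via Mal'cev rigidity, deduce rationality of the image from $\log(\Gamma)\subseteq\nn(\Q)$, and obtain injectivity from Zariski-density of a finite-index subgroup. For surjectivity the paper phrases the commensurability argument in algebraic-group language---realizing $N$ as $\U(\R)$ for a $\Q$-defined unipotent group $\U$ with $\Gamma$ commensurable to $\U(\Z)$, then citing \cite[10.4]{raghunathan}---which is exactly the Mal'cev commensurability criterion you invoke, just packaged differently.
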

To prove this theorem we will use the fact, due to Malcev, that
lattices in nilpotent groups are strongly rigid:
\begin{theorem}[{\cite[2.11]{raghunathan}}] \label{nilrigidity}
  Let $N_1$ and $N_2$ be two simply-connected nilpotent Lie groups,
  with lattices $\Gamma_1 \leq N_1$ and $\Gamma_2 \leq N_2$. Then
  every isomorphism $\Gamma_1 \to \Gamma_2$ extends uniquely to an
  isomorphism $N_1 \to N_2$.
\end{theorem}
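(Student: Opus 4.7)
The plan is to use Malcev coordinates to reduce the extension problem to a polynomial identity. Identify each $N_i$ with its Lie algebra $\mathfrak{n}_i$ via the exponential diffeomorphism so the group law becomes the BCH polynomial product, and for $\gamma \in N_i$, $t \in \R$, write $\gamma^t := \exp(t \log \gamma)$ for the unique one-parameter subgroup through $\gamma$. Refine the upper central series of $\Gamma_1$ to a filtration $1 = Z_0 \trianglelefteq Z_1 \trianglelefteq \cdots \trianglelefteq Z_n = \Gamma_1$ with each successive quotient infinite cyclic, and pick $\gamma_k \in Z_k$ projecting to a generator of $Z_k / Z_{k-1}$. Every $\gamma \in \Gamma_1$ has a unique Malcev normal form $\gamma = \gamma_1^{a_1}\cdots\gamma_n^{a_n}$ with $a_k \in \Z$, and by Proposition \ref{centralseriescomm} the filtration $\{Z_k\}$ is the intersection of $\Gamma_1$ with a refinement of the upper central series of $N_1$; this implies the Malcev coordinate map
\[
(t_1, \ldots, t_n) \mapsto \gamma_1^{t_1}\cdots\gamma_n^{t_n}
\]
is a diffeomorphism $\R^n \to N_1$.

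Since $[\gamma_i, \gamma_j] \in Z_{\min(i,j)-1} \subseteq \Gamma_1$, its Malcev coordinates are integers $c_{ij,k} \in \Z$; these commutator data together with BCH determine the $N_1$ group law in Malcev coordinates as a tuple of polynomials $P_k(\mathbf{t}, \mathbf{s})$ with rational coefficients. Since $\phi$ is a group isomorphism it preserves the upper central series, so $\phi(\gamma_1), \ldots, \phi(\gamma_n)$ is a Malcev basis of $\Gamma_2$ adapted to the upper central series of $\Gamma_2$, and applying $\phi$ to the commutator identities for the $\gamma_k$ yields the \emph{same} integer exponents $c_{ij,k}$. Hence multiplication in $N_2$ in the matched Malcev coordinates $(s_1, \ldots, s_n) \mapsto \phi(\gamma_1)^{s_1}\cdots\phi(\gamma_n)^{s_n}$ is governed by the same polynomials $P_k$. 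Define
\[
\Phi(\gamma_1^{t_1}\cdots\gamma_n^{t_n}) = \phi(\gamma_1)^{t_1}\cdots\phi(\gamma_n)^{t_n}.
\]
Then $\Phi : N_1 \to N_2$ is a smooth bijection restricting to $\phi$ on $\Gamma_1$ (integer coordinates). The identity $\Phi(xy) = \Phi(x)\Phi(y)$ is a polynomial relation in Malcev coordinates which holds on the Zariski-dense subset $\Z^{2n} \subseteq \R^{2n}$ (because $\phi$ is a homomorphism on $\Gamma_1$), hence holds identically, so $\Phi$ is a Lie group isomorphism.

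For uniqueness, any continuous homomorphism $\Psi : N_1 \to N_2$ extending $\phi$ carries one-parameter subgroups to one-parameter subgroups, forcing $\Psi(\gamma^t) = \phi(\gamma)^t$ for every $\gamma \in \Gamma_1$ and $t \in \R$; since products of such one-parameter subgroups through the Malcev basis elements exhaust $N_1$, $\Psi = \Phi$. The main obstacle is the bookkeeping in the second paragraph: verifying that the polynomial group laws on $N_1$ and $N_2$ coincide in the matched Malcev coordinates. This rests on choosing a strong Malcev basis adapted to the upper central series (so that commutators have integer Malcev coordinates concentrated in the lower indices) and on $\phi$ preserving this characteristic filtration, which together force the structure constants $c_{ij,k}$ of $\Gamma_1$ to be transported faithfully to $\Gamma_2$.
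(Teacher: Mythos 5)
The paper does not prove this theorem; it cites it directly as {\cite[2.11]{raghunathan}} (Malcev rigidity). So there is no paper proof to compare against, and the proposal stands or falls on its own.

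Your argument is essentially correct and is the standard Malcev-coordinate proof: build a strong Malcev basis of $\Gamma_1$ adapted to (a refinement of) the upper central series, transport it through $\phi$, and use the polynomiality of the group law in Malcev coordinates of the second kind together with Zariski density of $\Z^{2n}$ in $\R^{2n}$. The uniqueness argument via one-parameter subgroups is also the standard one and is fine, since $\exp$ is a diffeomorphism for simply-connected nilpotent $N_i$. Raghunathan's own proof takes a slightly different route through the canonical $\Q$-structure on the Lie algebra (the $\Q$-span of $\log\Gamma$ is a $\Q$-form of $\mathfrak{n}$, and a group isomorphism induces a $\Q$-linear Lie algebra isomorphism, which tensors up to $\R$); your coordinate version is the more explicit cousin of that argument and buys nothing extra, but is perfectly legitimate.

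Two small points worth tightening. First, your second paragraph (``the $c_{ij,k}$ together with BCH determine $P_k$'') is both imprecise and unnecessary: the passage from integer commutator exponents back to Lie algebra structure constants is a genuine BCH-inversion induction that you do not carry out, but you never need it, because the Zariski-density argument in the third paragraph already proves the group law polynomials coincide. You could delete the second paragraph entirely. Second, Proposition \ref{centralseriescomm} as stated covers only the upper central series itself, not your cyclic refinement $\{Z_k\}$; that the refinement also arises as $\Gamma_1 \cap V_k$ for a chain of connected normal Lie subgroups $V_k$ of $N_1$ is true (lift a basis of the free abelian quotient $\gamma^{i+1}(\Gamma_1)/\gamma^i(\Gamma_1)$ to coordinate subspaces of $\gamma^{i+1}(N_1)/\gamma^i(N_1)$, using normality mod center), but you should flag it as an extension of, not an application of, that proposition. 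With those caveats the proof goes through.
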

\begin{proof}[Proof of Theorem \ref{nilpotentcomm}:]
  Suppose $\phi : \Gamma_1 \to \Gamma_2$ is a partial automorphism of
  $\Gamma$. Then $\phi$ extends to an automorphism $\Phi \in \Aut(N)$
  by Theorem \ref{nilrigidity}. Since $\log(\Gamma)$ is contained in
  $\nn(\Q)$, this extension is in $\Aut(N)(\Q)$. This gives an
  injective homomorphism
  \[
  \xi : \Comm(\Gamma) \to \Aut(N)(\Q).
  \]
  
  Now suppose $\Phi \in \Aut(N)(\Q)$. It is well-known (for example,
  see \cite[Chapter 2]{raghunathan}) that there is a $\Q$-defined
  unipotent algebraic group $\U$ so that $N \cong \U(\R)$ and
  $\Gamma$ is commensurable with $\U(\Z)$. Then $\Phi$ extends to a
  $\Q$-defined automorphism of $\U$. By \cite[10.14]{raghunathan} the
  group $\Phi(\Gamma)$ is commensurable with $\Gamma$, hence $\Phi$
  induces a commensuration of $\Gamma$. It follows that $\xi$ is
  surjective.
\end{proof}

\section{The algebraic hull of a polycyclic group} \label{hullsection}

\subsection{Polycyclic groups}

We briefly review the general theory of lattices in solvable Lie
groups. See \cite{raghunathan} for a general reference, and
\cite{segalbook} for the theory of polycyclic groups.
\begin{definition}
  A group $\Gamma$ is {\em polycyclic} if there is a subnormal series
  \begin{equation} \label{polycyclicseries} 1 \lhd \Gamma_1 \lhd
    \Gamma_2 \lhd \dotsb \lhd \Gamma \end{equation}
  so that $\Gamma_i / \Gamma_{i-1}$ is cyclic for each $i$.
\end{definition}
The {\em Hirsch number} of $\Gamma$, denoted $\rank(\Gamma)$, is the
number of $i$ such that $\Gamma_i / \Gamma_{i-1}$ is infinite
cyclic. Hirsch number is independent of choice of such subnormal
series, and is invariant under passage to finite index
subgroups. Every polycyclic group contains a finite index subgroup
admitting a subnormal series (\ref{polycyclicseries}) such that each
$\Gamma_i / \Gamma_{i-1}$ is infinite cyclic. Such a group is called
{\em strongly polycyclic}. It is well-known that every lattice in a
connected, simply-connected solvable Lie group is strongly polycyclic.

Every polycyclic group $\Gamma$ admits a unique maximal normal
nilpotent subgroup, called the {\em Fitting subgroup}, denoted
$\Fitt(\Gamma)$. If $\Gamma$ is a strongly polycyclic group, then
$\Fitt(\Gamma)$ is isomorphic to a lattice in a simply-connected
nilpotent Lie group $N$. By Theorem \ref{nilrigidity} conjugation
extends to a representation $\tilde \sigma : \Gamma \to \Aut(N)$. If
$\mathfrak{n}$ is the Lie algebra of $N$, then by identifying
$\Aut(N)$ with $\Aut(\nn) \subseteq \GL(\mathfrak{n})$ we have a
representation $\sigma : \Gamma \to \GL(\nn)$.
\begin{proposition}[{\cite[4.10]{raghunathan}}] \label{fittingunipotent}
  Let $\Gamma$ be strongly polycyclic, and $\sigma : \Gamma \to
  \GL(\nn)$ as above. Then
  \[
  \Fitt(\Gamma) = \left \{ \gamma \in \Gamma \suchthat
    \sigma(\gamma) \text{ is unipotent} \right \}. 
  \]
\end{proposition}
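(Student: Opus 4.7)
The plan is to prove the equality by establishing both inclusions. For the easy direction, the observation is that for $\gamma \in \Fitt(\Gamma)$, the automorphism $\tilde\sigma(\gamma) \in \Aut(N)$ restricts to conjugation by $\gamma$ on the lattice $\Fitt(\Gamma) \subseteq N$; the uniqueness in Theorem \ref{nilrigidity} identifies it with inner conjugation by the element $n_\gamma \in N$ corresponding to $\gamma$. Differentiating gives $\sigma(\gamma) = \Ad(n_\gamma) = \exp(\mathrm{ad}\log n_\gamma)$, which is unipotent because $\mathrm{ad}\,X$ is nilpotent for every $X \in \nn$.

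For the reverse inclusion, set $V = \{\gamma \in \Gamma : \sigma(\gamma) \text{ unipotent}\}$. I would show $V$ is a normal nilpotent subgroup of $\Gamma$; maximality of $\Fitt(\Gamma)$ then forces $V \subseteq \Fitt(\Gamma)$. The Zariski closure $\A$ of $\sigma(\Gamma)$ in $\GL(\nn)$ is a connected solvable algebraic group (as $\Gamma$ is polycyclic, so is $\sigma(\Gamma)$), whose unipotent elements form the unipotent radical $\U_\A$ by Proposition \ref{solvablealgebraic}. Hence $V = \sigma^{-1}(\U_\A) \cap \Gamma$ is normal in $\Gamma$.

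To establish nilpotency of $V$, I would invoke the Lie--Kolchin theorem to produce a flag $0 = \nn_0 \subsetneq \nn_1 \subsetneq \dotsb \subsetneq \nn_k = \nn$ with $\U_\A$ acting trivially on each $\nn_i/\nn_{i-1}$, then set $F_i = \Fitt(\Gamma) \cap \exp(\nn_i)$. The identity $[v, x] = \tilde\sigma(v)(x) \cdot x^{-1}$ combined with triviality of $\tilde\sigma(V)$ on $\exp(\nn_i)/\exp(\nn_{i-1})$ yields $[V, F_i] \subseteq F_{i-1}$, so $k$-fold nested commutators $[V, [V, \dotsb, [V, \Fitt(\Gamma)]\dotsb]]$ vanish. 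Together with the fact that $V/\Fitt(\Gamma)$ embeds into the nilpotent group $\U_\A / \sigma(\Fitt(\Gamma))$ --- valid once we know $\ker\sigma \subseteq \Fitt(\Gamma)$ --- this forces the lower central series of $V$ to terminate.

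The hard part will be showing $\ker\sigma = C_\Gamma(\Fitt(\Gamma)) \subseteq \Fitt(\Gamma)$; this is essentially the technical content of the proposition. My approach is to exploit the structure of strongly polycyclic groups: $\Gamma$ embeds as a lattice in a simply-connected solvable Lie group $G = N \rtimes T$ with $T$ abelian, so after passing to a finite-index subgroup the quotient $\Gamma / \Fitt(\Gamma)$ becomes abelian, giving $[C_\Gamma(\Fitt(\Gamma)), C_\Gamma(\Fitt(\Gamma))] \subseteq \Fitt(\Gamma)$. Since $C_\Gamma(\Fitt(\Gamma))$ centralizes $\Fitt(\Gamma)$, it also centralizes $[C_\Gamma(\Fitt(\Gamma)), C_\Gamma(\Fitt(\Gamma))]$, making $C_\Gamma(\Fitt(\Gamma))$ nilpotent of class at most $2$; maximality of $\Fitt(\Gamma)$ then yields the desired inclusion.
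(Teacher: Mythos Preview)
The paper does not supply its own proof of this proposition; it is quoted verbatim from \cite[4.10]{raghunathan}. So there is no in-paper argument to compare against, and your proposal must stand on its own.

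Your overall architecture is right: the inclusion $\Fitt(\Gamma)\subseteq V$ via $\sigma(\gamma)=\Ad(n_\gamma)$ is fine, and for the reverse inclusion the plan ``show $V$ is a normal nilpotent subgroup'' is the correct one. But two of the steps have genuine gaps.

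First, the flag step. A Lie--Kolchin flag $0=\nn_0\subsetneq\dots\subsetneq\nn_k=\nn$ gives \emph{subspaces}, not Lie ideals, so $\exp(\nn_i)$ need not be a subgroup of $N$ and your $F_i=\Fitt(\Gamma)\cap\exp(\nn_i)$ is not well-defined as a group. The fix is to first take the upper central series $\mathfrak{z}_0\subsetneq\dots\subsetneq\mathfrak{z}_c=\nn$ (these are ideals, preserved by all of $\Aut(N)$), then refine each abelian quotient $\mathfrak{z}_i/\mathfrak{z}_{i-1}$ by a Lie--Kolchin flag for $\U_\A$; since $[\nn,\mathfrak{z}_i]\subseteq\mathfrak{z}_{i-1}$, every subspace between $\mathfrak{z}_{i-1}$ and $\mathfrak{z}_i$ is an ideal, and the argument $[V,F_j]\subseteq F_{j-1}$ then goes through.

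Second, and more seriously, your argument for $\ker\sigma=C_\Gamma(\Fitt(\Gamma))\subseteq\Fitt(\Gamma)$ does not work. You assert that $\Gamma$ embeds as a lattice in a simply-connected solvable Lie group $N\rtimes T$, but the hypothesis is only that $\Gamma$ is strongly polycyclic, and this embedding fails already for the Klein bottle group $\langle a,b\mid bab^{-1}=a^{-1}\rangle$. Even granting that $\Gamma/\Fitt(\Gamma)$ is virtually abelian, ``passing to a finite-index subgroup'' only yields $[C',C']\subseteq\Fitt(\Gamma)$ for some finite-index $C'\leq C$, so $C$ is merely \emph{virtually} nilpotent --- and torsion-free virtually nilpotent polycyclic groups need not be nilpotent (the Klein bottle group again). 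You need a direct argument that $C$ itself is nilpotent; one route is to observe that $\Fitt(C)$ is characteristic in $C$, hence normal in $\Gamma$, hence contained in $\Fitt(\Gamma)\cap C=Z(\Fitt(\Gamma))\subseteq Z(C)$, and then invoke the standard fact for polycyclic groups that the Fitting subgroup contains its own centralizer (proved e.g.\ in Segal's book by a minimal-counterexample argument, independent of the present proposition).
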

\begin{lemma} \label{fittingfi} Let $\Gamma$ be a strongly polycyclic
  group with $\Gamma_1 \leq \Gamma$ a subgroup of finite index. Then
  $\Fitt(\Gamma_1) = \Fitt(\Gamma) \cap \Gamma_1$.
\end{lemma}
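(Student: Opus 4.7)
The plan is to extract the lemma from Proposition \ref{fittingunipotent}, which characterizes $\Fitt(\Gamma)$ as the set of elements acting unipotently via the representation $\sigma \colon \Gamma \to \GL(\nn)$. One direction, $\Fitt(\Gamma) \cap \Gamma_1 \subseteq \Fitt(\Gamma_1)$, follows immediately from maximality: the intersection is a subgroup of the nilpotent group $\Fitt(\Gamma)$ (hence nilpotent) and normal in $\Gamma_1$ (because $\Fitt(\Gamma)$ is normal in $\Gamma$). The substance of the lemma is the reverse inclusion $\Fitt(\Gamma_1) \subseteq \Fitt(\Gamma)$.

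To obtain this, I would pass to the normal core $\Gamma_2 = \bigcap_{g \in \Gamma} g \Gamma_1 g^{-1}$, a finite-index normal subgroup of $\Gamma$ contained in $\Gamma_1$. Since $\Fitt(\Gamma_2)$ is characteristic in $\Gamma_2$ and $\Gamma_2$ is normal in $\Gamma$, the group $\Fitt(\Gamma_2)$ is normal nilpotent in both $\Gamma$ and $\Gamma_1$, hence sits inside both $\Fitt(\Gamma)$ and $\Fitt(\Gamma_1)$. Combined with the easy direction applied to the pairs $(\Gamma, \Gamma_2)$ and $(\Gamma_1, \Gamma_2)$, this yields $\Fitt(\Gamma_2) = \Fitt(\Gamma) \cap \Gamma_2 = \Fitt(\Gamma_1) \cap \Gamma_2$, so $\Fitt(\Gamma_2)$ is a common finite-index subgroup of $\Fitt(\Gamma)$ and $\Fitt(\Gamma_1)$. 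By uniqueness of the Malcev completion (Theorem \ref{nilrigidity}), the two simply-connected nilpotent Lie groups attached to $\Gamma$ and $\Gamma_1$ are canonically identified via their common sublattice $\Fitt(\Gamma_2)$; in particular, the two Lie algebras may be regarded as the same vector space $\nn$.

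The punchline is that for every $\gamma \in \Gamma_1$, the automorphism $\sigma(\gamma) \in \GL(\nn)$ computed using $\Fitt(\Gamma)$ agrees with the one computed using $\Fitt(\Gamma_1)$: both are derivatives of automorphisms of the common nilpotent Lie group $N$ that extend conjugation by $\gamma$ on the common sublattice $\Fitt(\Gamma_2)$, and Theorem \ref{nilrigidity} says such an extension is unique. Therefore, if $\gamma \in \Fitt(\Gamma_1)$, Proposition \ref{fittingunipotent} applied to $\Gamma_1$ makes $\sigma(\gamma)$ unipotent, and Proposition \ref{fittingunipotent} applied to $\Gamma$ then places $\gamma$ in $\Fitt(\Gamma)$, as desired.

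The main obstacle will be the bookkeeping involved in identifying the two a priori different Malcev completions (and their Lie algebras) coherently, so that the two versions of $\sigma$ on $\Gamma_1$ can be compared. Once the normal core supplies the common sublattice $\Fitt(\Gamma_2)$, the uniqueness clause of Malcev rigidity provides the needed compatibility essentially for free; the rest is a direct application of Proposition \ref{fittingunipotent} in both directions.
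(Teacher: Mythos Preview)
Your proof is correct and shares the paper's core idea: reduce to Proposition~\ref{fittingunipotent} by comparing the two representations $\sigma$ and $\sigma_1$ via Malcev rigidity. The execution differs slightly. You pass to the normal core $\Gamma_2$ so that $\Fitt(\Gamma_2)$ becomes a common finite-index sublattice of $\Fitt(\Gamma)$ and $\Fitt(\Gamma_1)$, which lets you \emph{identify} $N$ with $N_1$ and conclude that $\sigma$ and $\sigma_1$ literally agree on $\Gamma_1$. The paper is more direct: it simply observes that $\Gamma_1 \cap \Fitt(\Gamma) \subseteq \Fitt(\Gamma_1)$ extends to an embedding $N \hookrightarrow N_1$, notes that $\nn \subseteq \nn_1$ is $\sigma_1(\Gamma_1)$-invariant (since $\Fitt(\Gamma)$ is normal in $\Gamma$), and uses that the restriction of a unipotent operator to an invariant subspace is unipotent. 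This avoids the normal-core detour and never needs $N = N_1$. Your route has the side benefit of establishing that $N \cong N_1$ along the way, but for the lemma itself the paper's argument is shorter.
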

\begin{proof}
  It is clear that $\Fitt(\Gamma_1) \cap \Gamma \leq \Fitt(\Gamma)$,
  so we have only to show that $\Fitt(\Gamma_1) \leq \Fitt(\Gamma)$.
  Let $N$ be the Lie group containing $\Fitt(\Gamma)$ as a lattice,
  and let $N_1$ be the Lie group containing $\Fitt(\Gamma_1)$ as a
  lattice. Then $\Gamma_1 \cap \Fitt(\Gamma)$ is a lattice in $N$. It
  follows that the inclusion $\Gamma_1 \cap \Fitt(\Gamma) \to
  \Fitt(\Gamma_1)$ extends to an embedding of Lie groups $\incl : N
  \to N_1$ by \cite[2.11]{raghunathan}. This gives an embedding of
  $\mathfrak{n}$ as a Lie subalgebra of $\mathfrak{n_1}$. Let $\sigma
  : \Gamma \to \GL(\nn)$ and $\sigma_1 : \Gamma_1 \to \GL(\nn_1)$ be
  as above. Then $\mathfrak{n}$ is invariant under $\sigma_1( \Gamma_1
  )$ because $\Fitt(\Gamma)$ is normal in $\Gamma$. Suppose $\gamma
  \in \Fitt(\Gamma_1)$. Then by Proposition \ref{fittingunipotent},
  $\sigma_1(\gamma)$ is unipotent. It follows that $\sigma(\gamma)$ is
  unipotent, and so $\gamma \in \Fitt(\Gamma)$ by Proposition
  \ref{fittingunipotent}.
\end{proof}
\begin{corollary} \label{fittingcomm}
  If $\Gamma$ is strongly polycyclic then $\Fitt(\Gamma)$ is strongly
  commensuristic in $\Gamma$.  \qed
\end{corollary}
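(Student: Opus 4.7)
The plan is to reduce the statement directly to Lemma \ref{fittingfi} together with the fact that the Fitting subgroup is characteristic. Let $\phi : \Gamma_1 \to \Gamma_2$ be a partial automorphism of $\Gamma$; I need to show that $\phi(\Gamma_1 \cap \Fitt(\Gamma)) = \Gamma_2 \cap \Fitt(\Gamma)$.

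First I would observe that both $\Gamma_1$ and $\Gamma_2$ are themselves strongly polycyclic. This is routine: intersecting a subnormal series for $\Gamma$ with infinite cyclic factors against a finite index subgroup $\Gamma_i$ yields a subnormal series whose successive quotients inject into $\Z$, so each factor is either trivial or infinite cyclic, and dropping the trivial ones gives a strongly polycyclic series. Hence Lemma \ref{fittingfi} applies to both $\Gamma_1 \leq \Gamma$ and $\Gamma_2 \leq \Gamma$, giving
\[
\Gamma_1 \cap \Fitt(\Gamma) = \Fitt(\Gamma_1) \qquad \text{and} \qquad \Gamma_2 \cap \Fitt(\Gamma) = \Fitt(\Gamma_2).
\]

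Now I would invoke the fact that $\Fitt$ is a characteristic subgroup — the unique maximal normal nilpotent subgroup is preserved by every isomorphism. Since $\phi$ is an isomorphism $\Gamma_1 \to \Gamma_2$, it must satisfy $\phi(\Fitt(\Gamma_1)) = \Fitt(\Gamma_2)$. Substituting the two identities above yields $\phi(\Gamma_1 \cap \Fitt(\Gamma)) = \Gamma_2 \cap \Fitt(\Gamma)$, which is exactly the strong commensuristic condition. There is no real obstacle here; all the work is already packaged into Lemma \ref{fittingfi}, and the corollary is essentially a one-line consequence once one notes that the class of strongly polycyclic groups is preserved under passing to finite index subgroups.
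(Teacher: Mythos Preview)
Your proof is correct and is exactly the argument the paper intends: the corollary is marked with a \qed{} because it follows immediately from Lemma~\ref{fittingfi} together with the characteristic property of the Fitting subgroup, precisely as you have written.
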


\subsection{Algebraic hulls}

Our main tool for understanding commensurations of a polycyclic group
$\Gamma$ will be its algebraic hull. Roughly speaking, the algebraic
hull is algebraic group in which $\Gamma$ embeds Zariski-densely that
has minimal torus while having maximal unipotent radical. The
extremality conditions are important for the extension of
commensurations to algebraic automorphisms. The original construction
is due to Mostow \cite{mostowrep}, with an alternate construction in
\cite{raghunathan}.  More recently, algebraic hulls have been
constructed for certain virtually polycyclic groups by Baues in
\cite{bauesinfrasolv}. We will need only the classical algebraic hull.

\begin{definition}[Algebraic hull] \label{algebraichull} Suppose
  $\Gamma$ is a strongly polycyclic group. An {\em algebraic hull} of
  $\Gamma$ is a $\Q$-defined algebraic group $\bfh$ with an embedding
  $\incl : \Gamma \to \bfh(\Q)$ so that
  \begin{enumerate}[label=(H\arabic{enumi})]
  \item \label{density} \label{h1} $\incl(\Gamma)$ is Zariski-dense in
    $\bfh$,
  \item \label{strongradical} $Z_{\bfh}(\U_\bfh) \leq \U_\bfh$, where
    $\U_\bfh$ is the unipotent radical of $\bfh$,
  \item \label{dimension} $\dim(\U_\bfh) = \rank(\Gamma)$, and
  \item \label{commensurability} \label{h4} $\incl(\Gamma) \cap
    \bfh(\Z)$ is of finite index in $\incl(\Gamma)$.
  \end{enumerate}
\end{definition}
Algebraic hulls exist for all strongly polycyclic groups; see
\cite{raghunathan} for a construction.  The importance of the
algebraic hull is its uniqueness:
\begin{lemma}[{\cite[4.41]{raghunathan}}] \label{hullextension}
  Suppose $\Gamma_1$ and $\Gamma_2$ are two strongly polycyclic
  groups, and $\phi: \Gamma_1 \to \Gamma_2$ is an isomorphism. Let
  $\incl_1 : \Gamma_1 \to \bfh_1$ and $\incl_2 : \Gamma_2 \to \bfh_2$ be
  algebraic hulls for $\Gamma_1$ and $\Gamma_2$, respectively. Then
  $\phi$ extends to a $\Q$-defined isomorphism $\Phi: \bfh_1 \to
  \bfh_2$.
\end{lemma}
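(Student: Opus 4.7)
The strategy is the classical graph-of-isomorphism construction. Let
\[
\Lambda = \{(\incl_1(\gamma), \incl_2(\phi(\gamma))) : \gamma \in \Gamma_1\} \leq \bfh_1(\Q) \times \bfh_2(\Q),
\]
an isomorphic copy of $\Gamma_1$, and let $\bfg$ be its Zariski closure in $\bfh_1 \times \bfh_2$; $\bfg$ is $\Q$-defined by Proposition \ref{closuredefinition}. Each projection $p_i : \bfg \to \bfh_i$ is a $\Q$-defined surjection, since its image is Zariski-closed and contains the Zariski-dense $\incl_i(\Gamma_i)$. If both $p_i$ are $\Q$-defined isomorphisms then $\Phi = p_2 \circ p_1^{-1}$ is the desired extension, so the plan is to verify that $\bfg$ is itself an algebraic hull for $\Lambda \cong \Gamma_1$; a comparison of the semidirect-product decompositions furnished by Proposition \ref{solvablealgebraic} will then force the $p_i$ to have trivial kernel.

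Axioms \ref{h1} and \ref{h4} for $\bfg$ are routine: \ref{h1} by construction, \ref{h4} because $\Lambda \cap (\bfh_1(\Z) \times \bfh_2(\Z))$ is finite-index in $\Lambda$ by \ref{h4} for each $\bfh_i$. Polycyclicity of each $\Gamma_i$ forces $\bfh_i$, and hence $\bfg \leq \bfh_1 \times \bfh_2$, to be solvable. Axiom \ref{strongradical} follows from a direct reduction: a surjection between connected solvable algebraic groups carries unipotent radical onto unipotent radical, so $p_i(\U_\bfg) = \U_{\bfh_i}$; hence if $(x,y) \in Z_\bfg(\U_\bfg)$, then $x$ centralizes $\U_{\bfh_1}$ and so lies in $\U_{\bfh_1}$ by \ref{strongradical} for $\bfh_1$, and similarly $y \in \U_{\bfh_2}$, making $(x,y)$ unipotent in the solvable group $\bfg$ and thus a member of $\U_\bfg$ by Proposition \ref{solvablealgebraic}.

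The main obstacle is axiom \ref{dimension}: $\dim \U_\bfg = \rank(\Lambda)$. The lower bound is controlled by the nilpotent part. The Fitting subgroup is characteristic, so $\phi$ restricts to an isomorphism $\Fitt(\Gamma_1) \to \Fitt(\Gamma_2)$ between lattices in the simply-connected nilpotent Lie groups $\U_{\bfh_i}(\R)^0$; Malcev rigidity (Theorem \ref{nilrigidity}) extends this uniquely to a $\Q$-defined isomorphism $\hat\Phi : \U_{\bfh_1} \to \U_{\bfh_2}$ whose graph is a $\Q$-defined unipotent subgroup of $\bfh_1 \times \bfh_2$ of dimension $\rank(\Gamma_1)$ sitting inside $\bfg$. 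The reverse inequality, which I expect to be the technical heart of the proof, reduces to showing that the $\Q$-defined subgroup $\bfn = \{y \in \U_{\bfh_2} : (1, y) \in \bfg\}$ is trivial. Zariski density of $\Lambda$ in $\bfg$ promotes $\bfn$ to a $\Q$-defined normal subgroup of $\bfh_2$ (conjugation by $(\gamma, \phi(\gamma))$ transports $(1, y)$ to $(1, \phi(\gamma) y \phi(\gamma)^{-1})$), and one should force $\bfn = 1$ by combining axiom \ref{strongradical} for $\bfh_2$ with the torus action on $\U_{\bfh_2}$ afforded by Proposition \ref{solvablealgebraic}: any such $\bfn$ produces a $\bft_2$-invariant subspace of $\U_{\bfh_2}$ whose lift to $\bfg$ would add a direction centralizing $\U_\bfg$, contradicting \ref{strongradical} for $\bfg$ already established. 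Once $\bfg$ satisfies all four hull axioms, comparison of dimensions via the decompositions $\bfg = \U_\bfg \rtimes \bft_\bfg$ and $\bfh_i = \U_{\bfh_i} \rtimes \bft_i$ finishes the proof.
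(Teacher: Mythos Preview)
The paper does not prove this lemma; it is quoted directly from \cite[4.41]{raghunathan}. Your graph-of-isomorphism strategy is indeed the classical one used there, and your verification of \ref{h1}, \ref{strongradical}, \ref{h4} for $\bfg$ is correct. The problems lie entirely in your handling of \ref{dimension}.

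\emph{Lower bound.} Your claim that $\Fitt(\Gamma_i)$ is a lattice in $\U_{\bfh_i}(\R)^0$ is false in general. Axiom \ref{dimension} gives $\dim \U_{\bfh_i} = \rank(\Gamma_i)$, whereas $\Fitt(\Gamma_i)$ is a lattice in a nilpotent group of dimension $\rank(\Fitt(\Gamma_i))$, which is strictly smaller whenever $\Gamma_i$ is not virtually nilpotent (e.g.\ the \textsc{Sol} lattice of \S\ref{solvsection}, where $\rank(\Fitt(\Gamma)) = 2 < 3 = \rank(\Gamma)$). So Malcev rigidity produces a graph of dimension only $\rank(\Fitt(\Gamma_1))$ inside $\U_\bfg$. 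This is harmless, since the bound $\dim \U_\bfg \geq \rank(\Gamma_1)$ is immediate from the surjectivity of $p_1|_{\U_\bfg}$ that you already established.

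\emph{Upper bound.} Here there is a genuine gap. Your proposed mechanism---that a nontrivial $\bfn$ would yield a direction centralizing $\U_\bfg$ and so contradict \ref{strongradical} for $\bfg$---does not produce a contradiction. Any $z \in \bfn \cap Z(\U_{\bfh_2})$ does give an element $(1,z)$ centralizing $\U_\bfg$ (because $\U_\bfg \leq \U_{\bfh_1} \times \U_{\bfh_2}$), but $(1,z)$ is unipotent and hence already lies in $\U_\bfg$; axiom \ref{strongradical} is satisfied, not violated. More generally, nothing in \ref{h1}, \ref{strongradical}, \ref{h4} alone forbids $\bfh_2$ from having nontrivial normal unipotent subgroups---$[\bfh_2,\bfh_2]$ is one---so you cannot rule out $\bfn \neq 1$ by these axioms and the torus action.

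What is actually needed is the a priori inequality $\dim \U_\bfa \leq \rank(\Gamma)$ for \emph{any} solvable $\Q$-group $\bfa$ in which a strongly polycyclic $\Gamma$ embeds Zariski-densely with $Z_\bfa(\U_\bfa) \leq \U_\bfa$. This is a separate, nontrivial lemma in Raghunathan, proved by constructing a closed simply-connected solvable Lie subgroup $M \leq \bfa(\R)$ containing $\Gamma$ as a lattice (so $\dim M = \rank(\Gamma)$) and comparing $M$ with $\U_\bfa(\R)$. With this in hand one gets $\dim \U_\bfg \leq \rank(\Gamma_1) = \dim \U_{\bfh_1}$, so $p_1|_{\U_\bfg}$ is an isomorphism; then $\ker p_1$ meets $\U_\bfg$ trivially, is therefore diagonalizable and normal, hence central by rigidity of tori (Lemma~\ref{toririgidity}), hence lies in $Z_\bfg(\U_\bfg) \leq \U_\bfg$---forcing $\ker p_1 = 1$.
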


We wish to use rigidity of the algebraic hull to construct an
embedding of $\Comm(\Gamma)$ into $\Aut_\Q(\bfh)$ analogous to the use
of Malcev rigidity in Theorem \ref{nilpotentcomm}. For this, the
natural setting is the Zariski-connected component of the identity of
the algebraic hull.
\begin{definition}[Virtual algebraic hull] \label{valghull} Let
  $\Gamma$ be a virtually polycyclic group. A {\em virtual algebraic
    hull} of $\Gamma$ is a triple $(\bfh, \Delta, i)$, where $\bfh$ is
  a $\Q$-defined algebraic group, $\Delta$ is a finite index subgroup
  of $\Gamma$, and $\incl:\Delta \to \bfh(\Q)$ is an injective
  homomorphism so that
  \begin{enumerate}
  \item $\bfh$ is connected, and
  \item $\bfh$ with the embedding $\incl$ is an algebraic hull of $\Delta$.
  \end{enumerate}
\end{definition}
\begin{lemma} \label{vah} Every virtually polycyclic
  group has a virtual algebraic hull.
\end{lemma}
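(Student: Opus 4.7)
The plan is to reduce to the classical case (Definition \ref{algebraichull}) and then cut down to the identity component. Let $\Gamma$ be virtually polycyclic. By the theorem of Wang quoted above, $\Gamma$ contains a finite index strongly polycyclic subgroup $\Delta_0$. Apply the classical construction \cite{raghunathan} to obtain an algebraic hull $\incl_0 : \Delta_0 \to \tilde{\bfh}(\Q)$ satisfying \ref{density}--\ref{commensurability}. Let $\bfh = \tilde{\bfh}^0$ denote the Zariski-connected component of the identity. Set $\Delta_1 = \incl_0^{-1}(\bfh(\Q))$, a finite index subgroup of $\Delta_0$ since $\bfh$ is of finite index in $\tilde{\bfh}$, and then replace $\Delta_1$ by a further finite index strongly polycyclic subgroup $\Delta \leq \Delta_1$, which exists by the same theorem of Wang applied to $\Delta_1$. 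Let $\incl$ denote the restriction of $\incl_0$ to $\Delta$.

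The claim is that $\bfh$ together with $\incl : \Delta \to \bfh(\Q)$ is an algebraic hull of $\Delta$; together with the connectedness of $\bfh$, this will establish the lemma. I would verify \ref{density}--\ref{commensurability} in turn. For \ref{density}: the Zariski-closure $\overline{\incl(\Delta)}$ inside $\tilde{\bfh}$ is finite index in $\overline{\incl_0(\Delta_0)} = \tilde{\bfh}$, and it is contained in $\bfh$; since $\bfh$ is connected, any closed subgroup of finite index must equal $\bfh$, so $\overline{\incl(\Delta)} = \bfh$. For \ref{strongradical}, the unipotent radical $\U_{\tilde{\bfh}}$ is connected, so it is contained in $\tilde{\bfh}^0 = \bfh$; a short argument using the fact that $\U_\bfh$ is characteristic in $\bfh$ and that $\bfh$ is normal in $\tilde{\bfh}$ gives $\U_\bfh = \U_{\tilde{\bfh}}$, and then $Z_\bfh(\U_\bfh) = Z_{\tilde{\bfh}}(\U_{\tilde{\bfh}}) \cap \bfh \leq \U_{\tilde{\bfh}} \cap \bfh = \U_\bfh$. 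For \ref{dimension}, Hirsch rank is invariant under passage to finite index subgroups, so $\rank(\Delta) = \rank(\Delta_0) = \dim(\U_{\tilde{\bfh}}) = \dim(\U_\bfh)$. For \ref{commensurability}, note $\bfh(\Z) = \tilde{\bfh}(\Z) \cap \bfh(\Q)$ and $\incl(\Delta) \subseteq \bfh(\Q)$, so $\incl(\Delta) \cap \bfh(\Z) = \incl(\Delta) \cap \tilde{\bfh}(\Z)$, which is of finite index in $\incl(\Delta)$ because $\incl_0(\Delta_0) \cap \tilde{\bfh}(\Z)$ is of finite index in $\incl_0(\Delta_0)$ by \ref{commensurability} for $\tilde{\bfh}$.

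I expect the main subtlety to be the first step, namely the passage from $\tilde{\bfh}$ to its identity component. The key fact making this work is that a closed subgroup of finite index in a connected algebraic group equals the whole group; this collapses the Zariski-closure of any finite index subgroup of $\incl_0(\Delta_0)$ that lies in $\bfh$ onto $\bfh$ itself, allowing the Zariski-density condition \ref{density} to be preserved after restricting to $\Delta$. Apart from this, the remaining verifications are formal consequences of the behavior of unipotent radicals and Hirsch rank under the finite extensions involved.
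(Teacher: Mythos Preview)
Your proof is correct and follows essentially the same route as the paper: take a finite index strongly polycyclic subgroup, form its algebraic hull, pass to the identity component $\bfh = \tilde\bfh^0$, intersect the lattice with $\bfh$, and verify \ref{density}--\ref{commensurability}. Two minor remarks: the existence of a finite index strongly polycyclic subgroup in a virtually polycyclic group is not Wang's theorem (that result is about lattices in simply-connected solvable Lie groups) but rather the elementary fact stated just before it in the paper; and your passage from $\Delta_1$ to a further subgroup $\Delta$ is unnecessary, since any subgroup of a strongly polycyclic group is already strongly polycyclic.
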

\begin{proof}
  Suppose $\Gamma$ is virtually polycyclic. Let $\widetilde\Gamma \leq
  \Gamma$ be any finite index strongly polycyclic subgroup. Let
  $\widetilde\bfh$ be an algebraic hull for $\widetilde\Gamma$. Then
  the identity component $\widetilde\bfh^0$ is of finite index in
  $\widetilde\bfh$. Let $\Delta = \widetilde\Gamma \cap
  \widetilde\bfh^0$ and $\bfh = \widetilde\bfh^0$. It is easy to
  verify that $\bfh$ with the given inclusion of $\Delta$ in
  $\bfh(\Q)$ is a virtual algebraic hull of $\Gamma$.
\end{proof}

We will often abuse notation and refer to the algebraic group $\bfh$
of Definition \ref{valghull} as the virtual algebraic hull of
$\Gamma$, leaving the subgroup $\Delta$ and the inclusion $\incl: \Delta
\to \bfh(\Q)$ implicit.

\begin{lemma}
  Let $\Gamma$ be virtually polycyclic with virtual algebraic hull
  $(\bfh, \Delta, \incl)$. The algebraic group $\bfh$ is unique up to
  $\Q$-defined isomorphism.
\end{lemma}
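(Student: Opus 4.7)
The plan is to reduce uniqueness of the virtual algebraic hull to Lemma~\ref{hullextension}, applied to the identity map of a suitable finite-index strongly polycyclic subgroup that sits as a Zariski-dense lattice in both candidate hulls.

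Suppose $\bfh$ and $\bfh'$ are two virtual algebraic hulls of $\Gamma$. By definition there are finite-index strongly polycyclic subgroups $\Delta, \Delta' \leq \Gamma$ such that $\bfh$ is an algebraic hull of $\Delta$ and $\bfh'$ is an algebraic hull of $\Delta'$. Set $\Delta'' = \Delta \cap \Delta'$; this is a finite-index subgroup of $\Gamma$, and it is strongly polycyclic because strong polycyclicity is inherited by subgroups (intersect a strongly polycyclic series with $\Delta''$ and discard trivial quotients).

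The main technical step is to show that $\bfh$ is already an algebraic hull of $\Delta''$, and symmetrically for $\bfh'$. Conditions \ref{strongradical} and \ref{dimension} of Definition~\ref{algebraichull} depend only on $\bfh$ (noting $\rank(\Delta'') = \rank(\Delta)$ since Hirsch number is invariant under finite index), and \ref{commensurability} for $\Delta''$ follows by intersecting with $\bfh(\Z)$ the containment $\Delta'' \subseteq \Delta$. The crux is \ref{density}: one must argue that $\Delta''$ remains Zariski-dense in $\bfh$. The Zariski-closure of $\Delta''$ has finite index in the closure of $\Delta$, since $[\Delta : \Delta''] < \infty$, and hence is a finite-index closed subgroup of $\bfh$. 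This is exactly where the connectedness hypothesis in the definition of virtual algebraic hull is used: a finite-index closed subgroup of a connected algebraic group must equal the whole group. An identical argument applies to $\bfh'$.

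Once both $\bfh$ and $\bfh'$ are recognized as algebraic hulls of the common group $\Delta''$, Lemma~\ref{hullextension} applied to the identity isomorphism $\Delta'' \to \Delta''$ produces a $\Q$-defined algebraic group isomorphism $\bfh \to \bfh'$, completing the proof. The only real obstacle is the density verification above, and connectedness of the virtual algebraic hull is precisely what makes it go through.
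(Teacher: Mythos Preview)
Your proof is correct and follows essentially the same approach as the paper's own proof: intersect the two finite-index strongly polycyclic subgroups, use connectedness of the virtual hulls to verify that each hull is still an algebraic hull of the intersection (the paper compresses your verification of \ref{h1}--\ref{h4} into a single sentence), and then apply Lemma~\ref{hullextension} to the identity map on the intersection.
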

\begin{proof}
  Suppose $(\bfh_1, \Gamma_1, \incl_1)$ and $(\bfh_2, \Gamma_2,
  \incl_2)$ are two virtual algebraic hulls of $\Gamma$. 
  Then $\Gamma_1 \cap \Gamma_2$ is of finite index
  in both $\Gamma_1$ and $\Gamma_2$. Because $\bfh_1$ and $\bfh_2$ are
  connected, both $\restr{\incl_1}{\Gamma_1 \cap \Gamma_2}$ and
  $\restr{\incl_2}{\Gamma_1 \cap \Gamma_2}$ satisfy \ref{h1}--\ref{h4}
  for $\Gamma_1 \cap \Gamma_2$ in place of $\Gamma$. It follows from
  Lemma \ref{hullextension} that there is a $\Q$-defined isomorphism
  $\Phi : \bfh_1 \to \bfh_2$ extending $\incl_2 \circ
  \restr{\incl_1}{\Gamma_1 \cap \Gamma_2}^{-1}$.
\end{proof}

\begin{definition}[Fitting subgroup] \label{hullfitting} Suppose
  $(\bfh, \Delta, \incl)$ is a virtual algebraic hull of a virtually
  polycyclic group $\Gamma$. Define $\Fitt(\bfh)$, the {\em Fitting
    subgroup} of $\bfh$, to be the Zariski-closure of $\Fitt(\Delta)$
  in $\bfh$. Note that $\Fitt(\bfh)$ depends the inclusion
  $\incl : \Delta \to \bfh(\Q)$; we suppress this dependence from our
  notation as the embedding $\incl$ is implicit in the choice of
  virtual algebraic hull $\bfh$.
\end{definition}

Note that $[\bfh, \bfh] \leq \Fitt(\bfh)$, by
\cite[4.6]{bauesgrunewald}.

\begin{lemma} \label{solvcommembedding} Suppose $\Gamma$ is virtually
  polycyclic with virtual algebraic hull $(\bfh, \Delta, i)$. There is
  an embedding
  \begin{equation} \label{commembedding} \xi : \Comm(\Gamma) \to
    \Aut_\Q(\bfh). \end{equation}
\end{lemma}
\begin{proof}
  Suppose $\phi : \Delta_1 \to \Delta_2$ is a
  partial automorphism of $\Delta$. Then $\bfh$ is an algebraic hull
  for both $\Delta_1$ and $\Delta_2$ by connectedness, so $\phi$
  extends to $\Phi \in \Aut_\Q(\bfh)$. Equivalent partial automorphisms
  clearly give rise to equal extensions. The assignment $\phi \mapsto
  \Phi$ gives an injective homomorphism $\Comm(\Delta) \to
  \Aut_\Q(\bfh)$ by density of $\Delta_1$ and $\Delta_2$. The proof is
  complete since $\Comm(\Gamma) \cong \Comm(\Delta)$.
\end{proof}

There is an analogous construction of algebraic hulls for
simply-connected solvable Lie groups $G$, though they are only
$\R$-defined rather than $\Q$-defined.

\begin{definition}[Algebraic hull] \label{realalgebraichull} Suppose
  $G$ is a simply-connected solvable Lie group. A {\em real algebraic
    hull} of $G$ is an $\R$-defined algebraic group $\bfh$ with an
  embedding $\incl : G \to \bfh(\R)$ so that
  \begin{enumerate}
  \item \label{realdensity} $\incl(G)$ is Zariski-dense in
    $\bfh$,
  \item \label{realstrongradical} $Z_{\bfh}(\U_\bfh) \leq \U_\bfh$, where
    $\U_\bfh$ is the unipotent radical of $\bfh$, and
  \item \label{realdimension} $\dim(\U_\bfh) = \dim(G)$.
  \end{enumerate}
\end{definition}

The real algebraic hull of the group $G$ may be strictly larger than
the algebraic hull of a lattice $\Gamma \leq G$. See
\cite{bauesklopsch} for a detailed discussion of the relationship
between the algebraic hull of a lattice and the real algebraic hull of
the ambient Lie group. We use this theory in \S\ref{gensection}.

\subsection{Unipotent shadow}

Much of the theory of lattices in solvable Lie groups builds on the
much easier theory of lattices in nilpotent Lie group. A common tool
is the {\em unipotent shadow}. 
The following proposition summarizes the theory of unipotent shadows
of strongly polycyclic groups in algebraic hulls, as explained in
Sections 5 and 7 of \cite{bauesgrunewald}. For the reader's
convenience we include a sketch of a proof.

\begin{proposition}[\cite{bauesgrunewald}] \label{unipotentshadow} Suppose
  $\Gamma$ is a virtually polycyclic group with virtual algebraic hull
  $\bfh$. Let $\F$ be the Fitting subgroup of $\bfh$. There is a
  strongly polycyclic subgroup $\Lambda \leq \bfh(\Q)$ abstractly
  commensurable with $\Gamma$ so that:
  \begin{enumerate}
  \item There is a nilpotent subgroup $C\leq \Lambda$ so that $\Lambda
    = \Fitt(\Lambda) \cdot C$.
  \item There is a $\Q$-defined maximal torus $\bft \leq \bfh$ with
    centralizer $\D \leq \bfh$ so that $C = \Lambda \cap \D$ and $C$
    is Zariski-dense in $\D$.
  \item The subgroup $\theta \leq \U_\bfh(\Q)$ generated by
    $\Fitt(\Lambda)$ and $C_u$, the group of unipotent parts of
    elements of $C$, is a finitely generated subgroup Zariski-dense in
    $\U_\bfh$, such that $\Fitt(\Lambda) = \theta \cap \F$.
  \end{enumerate}
\end{proposition}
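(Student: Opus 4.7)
The plan is to construct $\Lambda$ by modifying generators of $\Delta$ into a canonical form adapted to a chosen $\Q$-defined maximal torus, and then enlarging the resulting subgroup to recover the desired structural properties. Begin by fixing a finite index strongly polycyclic $\Delta \leq \Gamma$ embedded in $\bfh(\Q)$ as in \ref{h1}--\ref{h4}. Using Proposition \ref{solvablealgebraic}, pick a $\Q$-defined maximal torus $\bft \leq \bfh$, yielding the $\Q$-defined semidirect product $\bfh = \U_\bfh \rtimes \bft$ by (SG5). Let $\D = Z_\bfh(\bft) = \bft \cdot \U_\D$ with $\U_\D = Z_{\U_\bfh}(\bft)$; since $\bft$ is central in $\D$, $\D$ is nilpotent, and (SG6) gives $\bfh = \F \cdot \D$.

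The next step lifts generators of $\Delta/(\Delta \cap \F)$ into $\D(\Q)$. The $\Q$-defined quotient $q : \bfh \to \bfh/\F$ is abelian (since $[\bfh,\bfh] \subseteq \F$), and because $\F \leq \U_\bfh$ the identification $\bfh/\F \approx (\U_\bfh/\F) \times \bft$ holds as $\Q$-defined groups. The restriction $q|_\D$ is $\Q$-defined surjective with kernel $\F \cap \D$, and it is surjective on $\Q$-points: the torus factor lifts identically to $\bft \leq \D$, while the unipotent factor lifts via the surjection $\U_\D(\Q) \twoheadrightarrow (\U_\bfh/\F)(\Q)$ of $\Q$-points of connected unipotent $\Q$-groups. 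Choose a basis $\bar c_1, \ldots, \bar c_r$ of a free abelian finite-index subgroup of $q(\Delta)$ and lift to $c_1, \ldots, c_r \in \D(\Q)$; write each $c_i = t_i u_i$ with $t_i \in \bft(\Q)$ and $u_i \in \U_\D(\Q)$.

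Define $C_s = \cyc{t_1, \ldots, t_r}$, which is Zariski-dense in $\bft$. Enlarge $\Fitt(\Delta)$ to a $(C_s \cdot \cyc{u_1, \ldots, u_r})$-invariant, finitely generated, Zariski-dense subgroup $\Fitt(\Lambda) \leq \F(\Q)$, and set $C_u = \cyc{u_1, \ldots, u_r} \cdot (\Fitt(\Lambda) \cap \U_\D)$ and $C = C_s \cdot C_u$. Then $C$ is a finitely generated nilpotent subgroup of $\D(\Q)$, Zariski-dense in $\D$ (as $C_s$ is dense in $\bft$ and $C_u$ is dense in $\U_\D$). Set $\Lambda = \Fitt(\Lambda) \cdot C$; by construction $C$ normalizes $\Fitt(\Lambda)$, so $\Lambda$ is a subgroup. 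It is strongly polycyclic and, by comparing images under $q$ and intersections with $\F$, abstractly commensurable with $\Delta$ and hence with $\Gamma$. Properties (1)--(3) now follow: (1) is built in; (2) follows because $\Lambda \cap \D = (\Fitt(\Lambda) \cap \D) \cdot C = (\Fitt(\Lambda) \cap \U_\D) \cdot C = C$, using $\Fitt(\Lambda) \leq \F$; (3) because $\theta = \cyc{\Fitt(\Lambda), C_u}$ is Zariski-dense in $\U_\bfh = \F \cdot \U_\D$, and any element of $\theta$ whose image in $\U_\bfh/\F$ is trivial must have its $C_u$-factors cancel modulo $\F$, forcing $\theta \cap \F = \Fitt(\Lambda)$.

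The main technical obstacle is producing the $C$-invariant enlargement $\Fitt(\Lambda)$: naively, the orbit of a finitely generated subgroup of $\F(\Q)$ under semisimple automorphisms can escape to a non-finitely-generated subgroup (compare $\Z[1/n]$ under multiplication by $n$). The resolution uses integrality from \ref{h4}: after replacing $\Delta$ by a finite-index subgroup and rescaling the integral model of $\bfh$, the action of $C$ on $\F$ preserves the arithmetic lattice $\F(\Z)$, so the $C$-invariant hull of a finite-index subgroup of $\Fitt(\Delta)$ inside $\F(\Z)$ remains a finitely generated nilpotent group, giving the required $\Fitt(\Lambda)$.
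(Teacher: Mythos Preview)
Your construction has a genuine gap: by setting $C = C_s \cdot C_u$ with $C_s$ and $C_u$ as \emph{separate} subgroups of $\Lambda$, you make $\Lambda$ too large. The elements $u_i \in C_u \subseteq \Lambda$ are unipotent, so by Proposition~\ref{fittingunipotent} they lie in the actual Fitting subgroup of $\Lambda$; hence the true $\Fitt(\Lambda)$ contains $M \cdot C_u$ (writing $M$ for your enlarged subgroup of $\F(\Q)$), which is not contained in $\F$ whenever $\U_\D \not\subseteq \F$. This already kills (3). Worse, it destroys commensurability with $\Gamma$: in the Sol example $\Delta = \Z^2 \rtimes_\psi \Z$ one has $\F = \C^2$, $\U_\D \cong \C$ with $\F \cap \U_\D = 0$, and the generator $\gamma$ of the $\Z$-factor lies in $\D$ with $t_1 = \gamma_s$ and $u_1 = \gamma_u \neq 1$. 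Your $\Lambda = \Z^2 \cdot \langle u_1 \rangle \cdot \langle t_1 \rangle$ then has Hirsch rank $4$, while $\rank(\Delta) = 3$, so $\Lambda$ is not abstractly commensurable with $\Gamma$.

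The point being missed is that in the paper's sketch (following Baues--Grunewald) the nilpotent supplement $C$ consists of elements $c \in \Lambda \cap \D$ whose semisimple and unipotent Jordan parts are \emph{not individually} in $\Lambda$; only their product $c = c_s c_u$ is. The set $C_u$ is then the image of $C$ under the homomorphism $c \mapsto c_u$ from $\D$ to $\U_\bfh$, and typically $C_u \not\subseteq \Lambda$---this is precisely why $\theta = \langle \Fitt(\Lambda), C_u \rangle$ is called a \emph{shadow} rather than a subgroup of $\Lambda$. To repair your argument, take $C = \langle c_1, \dots, c_r \rangle$ (not the separate pieces), arrange the thickening so that $C$ normalizes $\Fitt(\Lambda)$ and $\Lambda \cap \D = C$, and treat the condition $\theta \cap \F = \Fitt(\Lambda)$ as requiring a further thickening, as the paper indicates.
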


\begin{proof}[Sketch of proof:] 
  Let $\Delta$ be a strongly polycyclic subgroup of $\Gamma$ so that
  $\bfh$ is an algebraic hull of $\Delta$. Fix any maximal
  $\Q$-defined torus $\bft \leq \bfh$, and let $\D$ be the normalizer
  of $\bft$ in $\bfh$. Then $\D$ is a connected nilpotent $\Q$-defined
  subgroup of $\bfh$ that centralizes $\bft$.  By replacing
  $\Fitt(\Delta)$ with a finite index supergroup, we obtain a strongly
  polycyclic group $\Lambda \leq \bfh(\Q)$ commensurable with $\Delta$
  for which the group $C = \Lambda \cap \D$ is Zariski-dense in $\D$
  and satisfies $\Lambda = \Fitt(\Lambda) \cdot C$. The group
  $\Lambda$ is called a {\em thickening} of $\Delta$, and $C$ is
  called a {\em nilpotent supplement} in $\Lambda$.

  We now want to construct the group $\theta$ by taking the unipotent
  parts of elements of $\Lambda$. For every $c\in \D$, let $c_s$ and
  $c_u$ denote the semisimple and unipotent parts, respectively, of
  its Jordan decomposition in $\D$. Because $\D$ centralizes $\bft$,
  the map $c \mapsto c_u$ is a homomorphism $\D \to \U_\bfh$. Define
  $\theta$ to be the subgroup of $\U_\bfh(\Q)$ generated by
  $\Fitt(\Lambda)$ and $C_u$. By replacing $\Lambda$ with a further
  thickening, we can guarantee that $\theta \cap \F =
  \Fitt(\Lambda)$. Such a group $\theta$ is called a {\em good
    unipotent shadow}.
\end{proof}

\subsection{Algebraic structure of $\Aut(\bfh)$} \label{autstructure}
Suppose $\Gamma \leq G$ is a lattice in a simply-connected solvable
Lie group, and let $\bfh$ be its virtual algebraic hull. We recall the
structure of $\Aut_\Q(\bfh)$ explained in Section 3 of
\cite{bauesgrunewald}. Let $\U$ be the unipotent radical of $\bfh$. Fix
a $\Q$-defined maximal torus $\bft \leq \bfh$. There is a
decomposition $\bfh = \U \rtimes \bft$. Define
\begin{equation} \Aut(\bfh)_\bft = \{ \Phi \in \Aut(\bfh) \mid \Phi(\bft) =
\bft \} . \end{equation}
By property \ref{strongradical} of the algebraic hull, the restriction
map $\Aut(\bfh)_\bft \to \Aut(\U)$ is injective. Its image is a
$\Q$-defined closed subgroup of $\Aut(\U)$. The map 
\begin{equation} \label{authmap}
  \begin{split} \Theta : \U \rtimes \Aut(\bfh)_\bft
    &\to \Aut(\bfh) \\ (u, \Phi) &\mapsto \Inn_u \circ \Phi 
  \end{split} 
\end{equation} is a surjection with $\Q$-defined
kernel. The quotient $\U \rtimes \Aut(\bfh)_\bft / \ker( \Theta )$ is
a $\Q$-defined algebraic group, which gives $\Aut(\bfh)$ the structure
of a $\Q$-defined algebraic group. Because $\ker(\Theta)$ is
unipotent, it follows from the discussion of
\cite[2.2.3]{platonovrapinchuk} (see also \cite[3.6]{bauesgrunewald})
that there is a group isomorphism
\begin{equation} \label{qdefinedautos}
  \Aut_\Q(\bfh) \cong \U(\Q) \rtimes \Aut(\bfh)_\bft (\Q) / (\ker
\Theta) (\Q).
\end{equation}
Thus the algebraic structure of $\Aut(\bfh)$ is such that
$\Aut_\Q(\bfh) = \Aut(\bfh)(\Q)$.

\subsection{A finite index subgroup of $\Comm(\Gamma)$}
\label{commhfsection}
Let $\Gamma$, $\bfh$, and $\U$ be as above. Let $\F =
\Fitt(\bfh)$. Define
\begin{equation} \ahu = \left \{ \Phi \in \Aut(\bfh) \suchthat
  \restr{\Phi}{\bfh / \U} = \Id_{\bfh/\U}  \right \} . \end{equation}
\begin{lemma} \label{ahufinite}
  The subgroup $\ahu \leq \Aut(\bfh)$ is of finite index.
\end{lemma}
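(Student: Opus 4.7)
The plan is to realize $\ahu$ as the kernel of a natural algebraic homomorphism from $\Aut(\bfh)$ into the automorphism group of a torus, and then invoke rigidity of tori (Lemma \ref{toririgidity}) to conclude that the identity component of $\Aut(\bfh)$ lies in this kernel.

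First, since $\U = \U_\bfh$ is the unipotent radical, it is characteristic in $\bfh$, so every $\Phi \in \Aut(\bfh)$ descends to an automorphism $\bar\Phi$ of $\bfh/\U$. Because $\bfh$ is connected and solvable, Proposition \ref{solvablealgebraic} provides the decomposition $\bfh = \U \rtimes \bft$, and the projection $\bfh \to \bft$ with kernel $\U$ identifies $\bfh/\U$ with the torus $\bft$. This yields a homomorphism
\[
r : \Aut(\bfh) \to \Aut(\bft), \qquad \Phi \mapsto \bar\Phi,
\]
whose kernel is precisely $\ahu$. To see that $r$ is a morphism of algebraic groups, I would use the description $\Aut(\bfh) = (\U \rtimes \Aut(\bfh)_\bft)/\ker\Theta$ from \S\ref{autstructure}: for any $u \in \U$ and $t \in \bft$, normality of $\U$ gives $u\Psi(t)u^{-1} \equiv \Psi(t) \pmod{\U}$, so $\Inn_u$ acts trivially on $\bfh/\U$. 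Thus $r$ is induced by $(u, \Psi) \mapsto \restr{\Psi}{\bft}$, which is manifestly algebraic (restriction to the preserved subtorus $\bft$ inside $\Aut(\bfh)_\bft$).

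Finally, applying Lemma \ref{toririgidity} to the resulting algebraic action of $\Aut(\bfh)$ on the torus $\bft$, the identity component $\Aut(\bfh)^0$ acts trivially, which is to say $\Aut(\bfh)^0 \leq \ahu$. Since $\Aut(\bfh)$ is a linear algebraic group and hence has finitely many connected components, $\ahu$ has finite index in $\Aut(\bfh)$. The only point requiring any care is verifying that $r$ is an algebraic morphism; this is not entirely formal since $\Aut(\bfh)$ does not a priori act on $\bfh$ by a variety morphism, but it follows from the concrete identification of the algebraic structure on $\Aut(\bfh)$ recalled in \S\ref{autstructure}.
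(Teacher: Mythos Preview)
Your proof is correct and follows essentially the same approach as the paper: both arguments observe that $\bfh/\U$ is a torus, invoke Lemma \ref{toririgidity} to conclude that $\Aut(\bfh)^0$ acts trivially on it (hence lies in $\ahu$), and finish by noting that an algebraic group has finitely many components. You supply more detail than the paper does in verifying that the induced action of $\Aut(\bfh)$ on the torus is algebraic, which is exactly the hypothesis needed to apply Lemma \ref{toririgidity}; the paper simply asserts this application without comment.
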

\begin{proof}
  The quotient $\bfh / \U$ is a $\Q$-defined torus. By Lemma
  \ref{toririgidity}, the identity component $\Aut(\bfh)^0$ acts
  trivially on the torus $\bfh/\U$, and so $\Aut(\bfh)^0 \leq
  \ahu$. The claim follows since $[ \Aut(\bfh)^0 : \Aut(\bfh) ] <
  \infty$.
\end{proof}

Let $N_{\Aut(\bfh)}(\F)$ denote the subgroup of $\Aut(\bfh)$ preserving
$\F$. Define
\begin{equation} \ahf = \left \{ \Phi \in N_{\Aut(\bfh)}(\F) \suchthat
    \restr{\Phi}{\bfh / \F} = \Id_{\bfh/\F} \right \} . \end{equation}
By Corollary \ref{fittingcomm}, the image of the map $\xi:
\Comm(\Gamma) \to \Aut(\bfh)$ preserves $\F$. Define
\begin{equation} \Comm_{\bfh \mid \F}(\Gamma) =
  \xi^{-1}(\ahf). \end{equation}
\begin{lemma} \label{commhffinite}
  $[ \Comm(\Gamma) : \Comm_{\bfh \mid \F}(\Gamma) ] < \infty$.
\end{lemma}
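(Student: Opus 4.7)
The plan is to prove the stronger statement $\xi^{-1}(\ahu) = \xi^{-1}(\ahf)$, from which the conclusion follows immediately by Lemma \ref{ahufinite}. The inclusion $\ahf \subseteq \ahu$ is automatic: any automorphism fixing $\bfh/\F$ pointwise also fixes the further quotient $\bfh/\U$ since $\F \subseteq \U$. It therefore suffices to show that if $\Phi = \xi([\phi]) \in \ahu$, then $\Phi$ acts trivially on $\bfh/\F$.

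Since $[\bfh, \bfh] \subseteq \F$, the quotient $\bfh/\F$ is abelian, and the decomposition $\bfh = \U \rtimes \bft$ descends to an isomorphism $\bfh/\F \cong (\U/\F) \times \bft$. Every $\Q$-defined automorphism of $\bfh/\F$ preserves this decomposition, since there are no nontrivial $\Q$-defined homomorphisms from a torus to a vector group. The induced automorphism $\bar\Phi$ acts as the identity on the $\bft$ factor because $\Phi \in \ahu$, so its nontrivial content is an automorphism $\bar\Phi_u \in \Aut(\U/\F)$, and I must show $\bar\Phi_u = \Id$.

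Now apply Proposition \ref{unipotentshadow} to pass to a commensurable $\Lambda = \Fitt(\Lambda) \cdot C$ with $C \leq \D = Z_\bfh(\bft)$ a nilpotent supplement. For $c \in C$ with Jordan decomposition $c = c_u c_s$ (so $c_u \in \U^\bft$, $c_s \in \bft$), the image of $c$ in $\bfh/\F$ is $(\overline{c_u}, c_s)$, where $\overline{c_u}$ denotes the class of $c_u$ in $\U/\F$. Since $\Phi$ restricts to a partial automorphism of $\Lambda$, $\bar\Phi$ preserves the image of $\Lambda$ in $\bfh/\F$ up to finite index; because $\Fitt(\Lambda) \subseteq \F$ maps to the identity in $\bfh/\F$, this image equals $\{(\overline{c'_u}, c'_s) : c' \in C\}$. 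Thus for each such $c$ there exists $c' \in C$ with $c'_s = c_s$ and $\bar\Phi_u(\overline{c_u}) = \overline{c'_u}$. The element $c (c')^{-1} = c_u (c'_u)^{-1}$ lies in $\Lambda$ and is unipotent in $\bfh$.

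The crucial step is that any unipotent element of $\Lambda$ in $\bfh$ lies in $\Fitt(\Lambda) \subseteq \F$: if $\gamma \in \Lambda$ is unipotent in $\bfh$, then $\Ad_\bfh(\gamma)$ is unipotent on $\Lie(\bfh)$, and hence its restriction to $\nn = \Lie(\F)$ is also unipotent. This restriction agrees with $\sigma(\gamma)$ from Proposition \ref{fittingunipotent} (both are induced by conjugation by $\gamma$ on $\Fitt(\Lambda) \subseteq N$), so $\gamma \in \Fitt(\Lambda)$. Applied to $c (c')^{-1}$, this gives $\overline{c_u} = \overline{c'_u}$, so $\bar\Phi_u(\overline{c_u}) = \overline{c_u}$ for every $c \in C$. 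Since the subgroup $\theta = \langle \Fitt(\Lambda), C_u\rangle$ is Zariski-dense in $\U$ by Proposition \ref{unipotentshadow} and $\Fitt(\Lambda) \subseteq \F$, the set $\{\overline{c_u} : c \in C\}$ is Zariski-dense in $\U/\F$, forcing $\bar\Phi_u = \Id$ and hence $\Phi \in \ahf$. The main technical subtlety is this identification of unipotent elements of $\Lambda$ with the Fitting subgroup, via the agreement of $\sigma$ with the restriction of the adjoint representation of $\bfh$ to $\nn$.
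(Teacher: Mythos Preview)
Your proof is correct and reaches the same intermediate goal as the paper—namely $\xi^{-1}(\ahu) = \Comm_{\bfh\mid\F}(\Gamma)$—but the route is considerably more elaborate. The paper's argument is a one-liner: choose a representative $\phi:\Delta_1\to\Delta_2$ with $\Delta_i$ finite index in a $\Delta$ for which $\bfh$ is the algebraic hull; then for every $\gamma\in\Delta_1$ the element $\phi(\gamma)\gamma^{-1}$ lies in $\U$ (because $\Phi\in\ahu$) and in $\Delta$, hence in $\Delta\cap\U = \Fitt(\Delta)\subseteq\F$ by Proposition~\ref{fittingunipotent}. Zariski density of $\Delta_1$ then forces $\Phi\in\ahf$. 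No decomposition of $\bfh/\F$, no unipotent shadow, no nilpotent supplement $C$ is needed.

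Your approach instead splits $\bfh/\F\cong(\U/\F)\times\bft$, disposes of the toral factor, and then invokes the machinery of Proposition~\ref{unipotentshadow} to handle $\U/\F$ via elements of $C$. This is valid, and the crucial step—that a unipotent element of $\Lambda$ lies in $\Fitt(\Lambda)$—is exactly the same use of Proposition~\ref{fittingunipotent} that the paper makes, just applied to $c(c')^{-1}$ rather than to $\phi(\gamma)\gamma^{-1}$ directly. Two small points worth tightening: your claim that automorphisms of $\bfh/\F$ preserve the product decomposition needs both directions (no nontrivial maps torus $\to$ unipotent \emph{and} unipotent $\to$ torus), and the argument only applies to $c$ in a finite-index subgroup of $C$ (where $\Phi$ restricts to a map into $\Lambda$), though this suffices for Zariski density. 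The payoff of the paper's approach is brevity; yours makes the structure of $\bfh/\F$ more visible but at the cost of invoking the thickening and shadow constructions before they are strictly needed.
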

\begin{proof}
  By Lemma \ref{ahufinite}, it suffices to show that $\Comm_{\bfh \mid
    \F}(\Gamma) = \Comm(\Gamma) \cap \ahu$. Since $\ahf \leq \ahu$, it
  is clear that $\Comm_{\bfh \mid \F}(\Gamma) \leq \Comm(\Gamma) \cap
  \ahu$. On the other hand, suppose that $[\phi] \in \Comm(\Gamma)
  \cap \ahu$. Without loss of generality, assume that $\phi$ is a
  partial automorphism of a finite index subgroup $\Delta \leq \Gamma$
  for which $\bfh$ is an algebraic hull. By Proposition
  \ref{fittingunipotent}, we have that $\Delta \cap \U =
  \Fitt(\Delta)$. It follows that if $\phi(\gamma) \gamma^{-1} \in \U$
  for some $\gamma \in \Delta$, then $\phi(\gamma) \gamma^{-1} \in
  \Fitt(\Delta)$. Therefore $[\phi] \in \Comm_{\bfh \mid \F}(\Gamma)$.
\end{proof}

The structure of $\ahf$ can be made more explicit, following Section
3.3 of \cite{bauesgrunewald}. Let $\bft$ denote a maximal $\Q$-defined
torus in $\bfh$. Define
\begin{align}
  \as &= \left\{ \Phi \in \ahf \mid \Phi(\bft) = \bft, \;
    \restr{\Phi}{\bft} = \operatorname{id}_\bft \right\}, \\
  \Inn_\F^\bfh &= \left \{ \Phi\in \Aut(\bfh) \mid \Phi(x) = fxf^{-1}
    \text{ for some } f\in \F   \right \}.
\end{align}
Clearly $\Inn_\F^\bfh$ and $\as$ are both $\Q$-defined subgroups of
$\ahf$, and $\Inn_\F^\bfh$ is normal in $\ahf$. Let $(\ahf)_\Q$ denote
the group of $\Q$-defined automorphisms in $\ahf$. Because any two
maximal $\Q$-defined tori are conjugate by an element of $[\bfh,
\bfh](\Q) \leq \F(\Q)$, we have
\begin{lemma} \label{ahflemma}
  $\ahf = \Inn_\F^\bfh \cdot \as$. Moreover, $(\ahf)_\Q =
  \Inn_{\F}^\bfh(\Q) \cdot \as(\Q)$.
\end{lemma}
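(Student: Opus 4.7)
The plan is to construct the decomposition directly: given $\Phi \in \ahf$, I will produce an element $f \in \F$ such that $\Inn_{f^{-1}} \circ \Phi \in \as$, which gives $\Phi = \Inn_f \circ (\Inn_{f^{-1}} \circ \Phi) \in \Inn_\F^\bfh \cdot \as$. The key observation is that although $\Phi$ need not preserve $\bft$, the torus $\Phi(\bft)$ must lie inside the (small) subgroup $\F \cdot \bft$, so the two tori are conjugate within that subgroup rather than within all of $\bfh$.

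First I would verify that $\F \cdot \bft = \F \rtimes \bft$ is a closed, connected, $\Q$-defined solvable subgroup of $\bfh$ with unipotent radical exactly $\F$, using that $\F \leq \U_\bfh$ is a normal unipotent subgroup and that $\bft \cap \F = 1$. Since $\Phi \in \ahf$ acts trivially on $\bfh/\F$, for every $t \in \bft$ we have $\Phi(t) \in t\F \subseteq \F \cdot \bft$, so $\Phi(\bft) \subseteq \F \rtimes \bft$. Both $\bft$ and $\Phi(\bft)$ are therefore maximal tori in $\F \rtimes \bft$ (they both project isomorphically onto the reductive quotient $(\F \rtimes \bft)/\F \cong \bft$). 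Applying property (SG4) of Proposition \ref{solvablealgebraic} inside $\F \rtimes \bft$ yields an element $f \in [\F \rtimes \bft, \F \rtimes \bft] \leq \F$ with $\Phi(\bft) = f \bft f^{-1}$; for the rational statement, $\Phi$ $\Q$-defined forces $\Phi(\bft)$ to be a $\Q$-defined maximal torus, so (SG4) supplies $f \in \F(\Q)$.

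Now set $\Psi = \Inn_{f^{-1}} \circ \Phi$. By construction $\Psi(\bft) = \bft$, and $\Psi \in \ahf$ because $\Inn_{f^{-1}}$ acts trivially on $\bfh/\F$. For the final condition, take any $t \in \bft$: then $\Psi(t) \in \bft$ and $\Psi(t) t^{-1} \in \F$ (using that $\Psi$ acts trivially on $\bfh/\F$ and that $\bft$ is abelian). Since $\F$ is unipotent and $\bft$ is a torus, $\bft \cap \F = \{1\}$, forcing $\Psi(t) = t$. Thus $\Psi \in \as$, which finishes the factorization. The $\Q$-defined version is identical once $f$ is chosen in $\F(\Q)$, since $\Inn_{f^{-1}}$ is then $\Q$-defined and so is $\Psi$.

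The only point requiring care is the selection of $f$ in $\F$ (not merely in $\U_\bfh$); this is exactly why one works inside the auxiliary subgroup $\F \rtimes \bft$ rather than in all of $\bfh$, and why the hypothesis $\Phi|_{\bfh/\F} = \Id$ built into the definition of $\ahf$ is essential. The rationality assertion then reduces to a clean application of (SG4) inside this $\Q$-defined subgroup, rather than any further descent argument.
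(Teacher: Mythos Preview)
Your argument is correct and follows the same strategy the paper sketches just before the lemma: conjugate $\Phi(\bft)$ back to $\bft$ by some $f\in\F$, then verify that $\Inn_{f^{-1}}\circ\Phi$ fixes $\bft$ pointwise using $\bft\cap\F=1$. The only difference is cosmetic: the paper (following \cite[3.13]{bauesgrunewald}) applies (SG4) directly in $\bfh$ and invokes the fact, recorded after Definition~\ref{hullfitting}, that $[\bfh,\bfh]\leq\F$, so the conjugating element automatically lands in $\F(\Q)$; your detour through the auxiliary subgroup $\F\rtimes\bft$ accomplishes the same thing but is unnecessary once that containment is available.
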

\begin{proof}
  See \cite[3.13]{bauesgrunewald}. The latter statement follows from
  equation (\ref{qdefinedautos}); cf. \cite[3.6]{bauesgrunewald}.
\end{proof}

\section{Commensurations of lattices in solvable
  groups} \label{solvsection}

\subsection{Example: \textsc{Sol} lattice} Let $\psi : \Z^2 \to
\Z^2$ be the automorphism defined by $\psi(1,0) = (2,1)$ and
$\psi(0,1) = (1,1)$. Let $C$ be the infinite cyclic group generated by
$\psi$, and define $\Gamma = \Z^2 \rtimes C$. Note that $\Gamma$ is a
lattice in 3-dimensional \textsc{Sol} geometry. We have that
$\Fitt(\Gamma) = \Z^2$, so there are induced maps 
\[
r : \Comm(\Gamma) \to \Comm(\Z^2) \cong \GL_2(\Q)
\]
and 
\[
\pi : \Comm(\Gamma) \to \Comm( C ) \cong \Q^*.
\]
Suppose $\phi: \Gamma_1\to \Gamma_2$ is a partial automorphism of
$\Gamma$. There are nonzero $p, q$ so that $\pi(\phi)[\psi^q] =
[\psi^p]$. Using the fact that $\phi$ is an isomorphism, we have
\begin{equation} \label{solexampleeqn} \phi( \psi^q(v) ) = \psi^p(
  \phi(v) ) \end{equation} for all $v\in \Gamma_1 \cap \Z^2$.  Since
$\Gamma_1 \cap \Z^2$ spans $\Z^2 \otimes \Q$, it follows that
$r(\phi)$ conjugates $\psi^q$ to $\psi^p$ in $\GL_2(\Q)$. Therefore $p
= \pm q$ since $\psi$ has an eigenvalue not on the unit circle. It
follows that there is an index 2 subgroup $\Comm^+(\Gamma)$ so that
$\pi$ is trivial when restricted to $\Comm^+(\Gamma)$.

Let $Z_{\GL_2(\Q)}(\psi)$ denote the centralizer of $\psi$ in
$\GL_2(\Q)$. From (\ref{solexampleeqn}) we see that $r(\phi) \in
Z_{\GL_2(\Q)}(\psi)$ for all $\phi \in \Comm^+(\Gamma)$. Moreover, it
is clear that the induced map $\bar r : \Comm^+(\Gamma) \to
Z_{\GL_2(\Q)}(\psi)$ is surjective. Let $K = \ker(\bar r)$. Every
$\phi \in K$ is of the form $\phi( v, \psi^p ) = (v + \rho(\psi^p) ,
\psi^p )$ for a cocycle $\rho : H \to \Z^2$ defined on some finite
index subgroup $H \leq C$. One can show that
\[
K = \varprojlim_{[C : H] < \infty} H^1( H , \Z^2 ) \cong H^1( C,
\Q^2) \cong \Q^2.
\]

Therefore $\Comm^+(\Gamma)$ satisfies the short exact sequence
\[
1 \to \Q^2 \to \Comm^+(\Gamma) \to Z_{\GL_2(\Q)}(\psi) \to 1.
\]
This sequence splits, and the action of $Z_{\GL_2(\Q)}(\psi)$ on
$\Q^2$ is the standard action.

\subsection{Commensurations of solvable lattices are rational}

We continue to use the notation developed in
\S\ref{hullsection}. Given a lattice $\Gamma$ in a connected,
simply-connected solvable Lie group, let $\bfh$ denote its virtual
algebraic hull with Fitting subgroup $\F$ and $\Q$-defined maximal
torus $\bft$. Then $\ahf$ denotes the group of automorphisms of $\bfh$
preserving $\F$ and trivial on $\bfh/\F$. Let $\Inn_\F^\bfh$ denote
the group of automorphisms of $\bfh$ induced by conjugation by
elements of $\F$, and $\as$ denote the group of automorphisms fixing
$\bft$.

\begin{theorem} \label{commhfdescription} Let $\Gamma$ be a virtually
  polycyclic group. Let $\bfh$ be the virtual algebraic hull of
  $\Gamma$, with $\F = \Fitt(\bfh)$. The map $\xi : \Comm(\Gamma) \to
  \Aut(\bfh)$ induces an isomorphism of groups
  \[
  \Comm_{\bfh\mid \F}(\Gamma) \cong (\ahf)_\Q.
  \]
\end{theorem}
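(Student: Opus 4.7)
The plan is to prove both containment and surjectivity; injectivity of the restriction $\xi : \Comm_{\bfh \mid \F}(\Gamma) \to (\ahf)_\Q$ is already a consequence of Corollary \ref{solvcommembedding}. Containment holds essentially by definition: $\Comm_{\bfh \mid \F}(\Gamma) = \xi^{-1}(\ahf)$ maps into $\ahf$ under $\xi$, and the image of $\xi$ lies in $\Aut_\Q(\bfh) = \Aut(\bfh)(\Q)$ because the extensions produced by Lemma \ref{hullextension} in Corollary \ref{solvcommembedding} are $\Q$-defined ($\Gamma$ embeds in $\bfh(\Q)$ as a Zariski-dense subgroup), and the identification (\ref{qdefinedautos}) from \S\ref{autstructure} equates $\Aut_\Q(\bfh)$ with $\Aut(\bfh)(\Q)$. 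Hence $\xi$ lands in $\ahf \cap \Aut(\bfh)(\Q) = (\ahf)_\Q$.

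For surjectivity, my plan is to apply Lemma \ref{ahflemma} to decompose any $\Phi \in (\ahf)_\Q$ as $\Phi = \Inn_f \circ \Psi$ with $f \in \F(\Q)$ and $\Psi \in \as(\Q)$, then to realize each factor as $\xi$ of some commensuration. Replace $\Gamma$ with a thickening $\Lambda = \Fitt(\Lambda) \cdot C$ as in Proposition \ref{unipotentshadow}; this loses nothing because $\Comm(\Gamma) \approx \Comm(\Lambda)$. For $\Inn_f$ with $f \in \F(\Q) \leq \U_\bfh(\Q)$, conjugation by $f$ preserves the commensurability class of lattices in the simply-connected nilpotent Lie group $\F(\R)$, so it sends $\Fitt(\Lambda)$ to a commensurable lattice. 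The inner automorphism acts on $C$ by $c \mapsto [f, c] \cdot c$, with $[f, c] \in \F(\Q)$ because $\F$ is normal in $\bfh$. I would enlarge $\Fitt(\Lambda)$ to a $C$-invariant lattice $\Fitt(\Lambda)^+$ of $\F(\R)$ containing the commutators $[f, c_i]$ for generators $c_i$ of $C$, producing a thickening $\Lambda^+ = \Fitt(\Lambda)^+ \cdot C$ commensurable with $\Lambda$ on which $\Inn_f$ descends to a partial automorphism. For $\Psi \in \as(\Q)$, the same strategy applies: $\Psi$ fixes $\bft$ pointwise and, being $\Q$-defined, sends the unipotent shadow $\theta$ to a commensurable finitely generated Zariski-dense subgroup of $\U_\bfh(\Q)$, and an analogous enlargement makes $\Psi$ descend to a partial automorphism of a thickening. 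Uniqueness in Lemma \ref{hullextension} then ensures that $\xi$ of each resulting commensuration recovers the given automorphism, so $\xi$ of their composition is $\Phi$.

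The main obstacle is in executing these enlargements carefully: one must produce a lattice in $\F(\R)$ that is simultaneously commensurable with $\Fitt(\Lambda)$, invariant under $C$-conjugation in $\bfh$, and large enough to absorb the images $\Phi(\Fitt(\Lambda))$ and the commutators $[f, c]$ for all $c \in C$. This relies on the fundamental fact that any two finitely generated Zariski-dense subgroups of $\U(\Q)$, for $\U$ a $\Q$-defined unipotent algebraic group, are commensurable (both are lattices in $\U(\R)$ sharing the same $\Q$-structure), combined with the observation that $C$ acts on $\F$ by algebraic automorphisms preserving its $\Q$-structure and preserves the commensurability class of $\Fitt(\Lambda)$, so finitely many $C$-orbits of elements of $\F(\Q)$ still lie in a common lattice obtained by an appropriate enlargement of $\Fitt(\Lambda)$.
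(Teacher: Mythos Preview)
Your overall strategy matches the paper's exactly: reduce to the decomposition $(\ahf)_\Q = \Inn_\F^\bfh(\Q) \cdot \as(\Q)$ from Lemma~\ref{ahflemma}, and realize each factor as $\xi$ of a commensuration, working in a thickening $\Lambda = \Fitt(\Lambda)\cdot C$ provided by Proposition~\ref{unipotentshadow}. Injectivity and containment are handled the same way.

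The difference is in the technical execution of the two claims. The paper does not enlarge $\Fitt(\Lambda)$; instead it works with the good unipotent shadow $\theta$ and \emph{shrinks} $C$. For $\Inn_f$ it invokes the technical Lemma~\ref{shadowaction} (a Baker--Campbell--Hausdorff computation) to find a finite-index $C'' \leq C$ with $fcf^{-1}c^{-1} \in \theta_1 \cap \theta_2$ for all $c\in C''$, and then uses $\theta \cap \F = \Fitt(\Lambda)$ to land in $\Fitt(\Lambda)$. For $\Psi \in \as(\Q)$ it gives a direct argument with no enlargement and no appeal to Lemma~\ref{shadowaction}: writing $c = c_u c_s$ and using that $\Psi$ fixes $\bft$, one gets $\Psi(c) = f c$ with $f = \Psi(c_u)c_u^{-1} \in \theta \cap \F = \Fitt(\Lambda)$, so $\Psi(C_1) \subseteq C$ already.

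Your enlargement route is workable, but two points deserve care. First, the assertion that ``finitely many $C$-orbits of elements of $\F(\Q)$ lie in a common $C$-invariant lattice'' is exactly the content that the paper isolates as Lemma~\ref{shadowaction}: it requires passing to the Lie algebra, using that a finite-index subgroup of $C$ acts by integer matrices (via Segal's lemma), and controlling BCH denominators; it is not a one-liner. Second, even after you build $\Fitt(\Lambda)^+$, conjugation by $f$ need not carry $\Fitt(\Lambda)^+$ into itself, so you will still have to pass to a finite-index subgroup of $\Fitt(\Lambda)^+$ and then to a finite-index subgroup of $C$ normalizing it --- i.e., you end up shrinking $C$ anyway. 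So the enlargement buys no real simplification, and for the $\as(\Q)$ case the paper's use of the good shadow property $\theta \cap \F = \Fitt(\Lambda)$ is noticeably cleaner than another enlargement.
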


The proof of the theorem is in two steps. First we show that
$\Inn_\F^\bfh(\Q) \leq \xi(\Comm(\Gamma))$, and second that $\as(\Q)
\leq \xi(\Comm(\Gamma))$. The unipotent shadow will be our main
tool. Before we start the proof of Theorem \ref{commhfdescription}, we
note the following technical lemma, which will be used again in
\S\ref{gensection}.
\begin{lemma} \label{shadowaction} Let $\U$ be a $\Q$-defined
  unipotent algebraic group and $\theta' \leq \U(\Q)$ be a finitely
  generated, Zariski-dense subgroup. Let $P$ be a group acting on $\U$
  by algebraic group automorphisms preserving $\theta'$. Suppose $f\in
  \U(\Q)$. There is some finite index subgroup $P'' \leq P$ so that $f
  (p \cdot f^{-1}) \in \theta'$ for all $p\in P''$.
\end{lemma}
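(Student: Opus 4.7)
The plan is to induct on the nilpotency class $k$ of $\U$, using that $c(p) := f \cdot (p \cdot f^{-1})$ defines a $1$-cocycle $P \to \U(\Q)$ for the $P$-action. For the base case $\U$ abelian, I would choose $N$ with $f^N \in \theta'$; since $P$ preserves $\theta'$, it also preserves the set $\theta'^{1/N} := \{y \in \U(\Q) : y^N \in \theta'\}$, which is a subgroup finite-index over $\theta'$ in the abelian case (of index $N^{\dim\U}$). Then $P$ acts on the finite quotient $\theta'^{1/N}/\theta'$, and taking $P''$ to be the stabilizer of the class of $f$ would give $f \cdot p(f^{-1}) \in \theta'$ for $p \in P''$.

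For the inductive step ($k \geq 2$), I would apply the lemma to the quotient $\overline{\U} = \U/Z(\U)$ (of class $k-1$), with induced data $\bar{\theta'}$, $\bar{f}$, and $P$-action, obtaining a finite-index $P_1 \leq P$ with $c(p) \in \theta' \cdot Z(\U)(\Q)$ for every $p \in P_1$. Writing $c(p) = t(p) z(p)$ with $t(p) \in \theta'$ and $z(p) \in Z(\U)(\Q)$, the centrality of $z(p)$ implies that the class $\overline{z}(p) \in A := Z(\U)(\Q)/(\theta' \cap Z(\U))$ is well-defined and that a direct calculation (using $z(p_1)$ commutes with $p_1(t(p_2))$) makes $\overline{z} : P_1 \to A$ a $1$-cocycle into an abelian $P_1$-module.

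The main obstacle will be showing that $\overline{z}(P_1)$ is contained in a finite subgroup of $A$. Fix $M$ with $f^M \in \theta'$; then $p(f)^M = p(f^M) \in \theta'$ for every $p \in P$, so both $f$ and $p(f)$ lie in the ``$M$-th root set'' $\{y \in \U(\R) : y^M \in \theta'\}$. Iterated application of the Hall--Petresco formula for powers in a nilpotent group should then produce a uniform exponent $N_0$, depending only on $M$ and $k$, such that $c(p)^{N_0} \in \theta'$. Since $z(p)$ is central and thus commutes with $t(p)$, one obtains $z(p)^{N_0} = t(p)^{-N_0} c(p)^{N_0} \in \theta' \cap Z(\U)$, confining $\overline{z}(P_1)$ to the finite $N_0$-torsion subgroup of $A$.

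Once the image is finite, I would conclude using the standard observation that the kernel $\{p \in P_1 : \overline{z}(p) = 0\}$ of a $1$-cocycle into a finite abelian module is a subgroup (from the cocycle equation and the fact that $\overline{z}(1) = 0$, $\overline{z}(p^{-1}) = -p^{-1} \overline{z}(p)$), and that distinct left cosets of this kernel map to distinct values of $\overline{z}$, so it has index at most $|\overline{z}(P_1)| < \infty$. Setting $P''$ to be this kernel yields $z(p) \in \theta' \cap Z(\U) \subseteq \theta'$ and therefore $c(p) = t(p) z(p) \in \theta'$ for all $p \in P''$, completing the induction.
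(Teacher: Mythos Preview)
Your approach is correct and genuinely different from the paper's. The paper works entirely in the Lie algebra: it passes to $\mathfrak{u}$ via $\log$, uses that $P$ acts by integer matrices on a finite-index subgroup $P'$, writes $\log(f(p\cdot f^{-1})) = X + Y_p + q(X,Y_p)$ via Baker--Campbell--Hausdorff, and then takes a congruence subgroup $P'' = P'(N)$ to force both the linear part $X+Y_p$ and the bracket term $q(X,Y_p)$ into a lattice contained in $\log(\theta')$. No induction, no cocycles --- just one direct estimate. Your route is more structural: you exploit the cocycle identity for $c(p)$, strip off the center inductively, and reduce to a cocycle into a torsion abelian group whose kernel is automatically finite index. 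This is a nice conceptual packaging; the paper's argument is more hands-on but avoids the induction entirely.

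One point to tighten: the claim that $N_0$ depends ``only on $M$ and $k$'' is not quite right --- it must also depend on $\theta'$ (for instance through the denominators in the structure constants of $\mathfrak{u}$ relative to a basis adapted to $\theta'$). Your Heisenberg computations will show this: changing $\theta'\cap Z(\U)$ forces $N_0$ to change. What you actually need, and what is true, is a \emph{single} $N_0$ working for all $p$, and that follows once you know $\log f$ and all $\log p(f)$ lie in a fixed lattice $\tfrac{1}{Mm}L$ (because $M\log f,\,M\log p(f)\in\log\theta'\subseteq\tfrac{1}{m}L$). The ``iterated Hall--Petresco'' justification you sketch can be made to work, but a clean execution of that step ends up using essentially the same lattice bookkeeping in $\mathfrak{u}$ that the paper does directly. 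So the two proofs converge at exactly the technical crux, even though the surrounding architecture is different.
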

\begin{proof}
  Let $\Lambda$ be the group generated by $\theta'$ and $f$. Then
  $\Lambda$ is commensurable with $\U(\Z)$, hence contains $\theta'$
  as a subgroup of finite index $d$. It is not hard to see that there
  are finitely many subgroups of $\U(\Q)$ containing $\theta'$ with
  index $d$ (see \cite[Ch6]{segalbook} and
  \cite[6.3]{bauesgrunewald}). The group $P$ permutes these subgroups,
  hence there is a subgroup $P' \leq P$ preserving $\Lambda$. Because
  $\theta'$ has finite index in $\Lambda$ and is preserved by $P$, there
  is a further finite index subgroup $P''\leq P$ that acts trivially
  on the coset space $\Lambda / \theta'$. This completes the proof.
\end{proof}

\begin{proof}[Proof of Theorem \ref{commhfdescription}]

  Given $\Gamma$ and $\bfh$ as in the theorem, let $\U =
  \U_\bfh$. Find a strongly polycyclic subgroup $\Lambda \leq
  \bfh(\Q)$ abstractly commensurable with $\Gamma$, along with $\bft$,
  $\D$, $C$, and $\theta$ as in Proposition
  \ref{unipotentshadow}. That is, $\bft$ is a maximal $\Q$-defined
  torus, $\D$ is the centralizer of $\bft$ containing $C =
  \Lambda \cap \D$ as a Zariski-dense subgroup, and $\theta \leq
  \U(\Q)$ is a good unipotent shadow of $\Lambda$.

  By Lemma \ref{solvcommembedding} there is an embedding
  \[
  \xi : \Comm(\Gamma) \to \Aut_\Q(\bfh).
  \]
  By definition of $\Comm_{\bfh\mid \F}(\Gamma)$, this restricts to an
  embedding
  \[
  \hat \xi : \Comm_{\bfh \mid \F} (\Gamma) \to (\ahf)_\Q.
  \]
  There is a decomposition $(\ahf)_\Q = \Inn_\F^\bfh (\Q) \cdot
  \as(\Q)$ by Lemma \ref{ahflemma}. We have only to show that both
  $\Inn_\F^\bfh(\Q)$ and $\as(\Q)$ are in the image of $\hat \xi$.

  \bold{Claim 1:} $\Inn_\F^\bfh(\Q) \leq \xi(\Comm(\Gamma))$.
  
  \italics{Proof of Claim 1:}
    Suppose $\Phi \in \Inn_\F^\bfh(\Q)$. Then there is some $f\in
    \F(\Q)$ so that $\Phi(x) = fxf^{-1}$ for all $x\in \bfh$. Because
    $\theta$ is Zariski-dense in $\U$, conjugation by $f$ induces a
    commensuration of $\theta$ by Theorem \ref{nilpotentcomm}. Let
    $\theta_1$ and $\theta_2$ be finite index subgroups of $\theta$ so
    that $\Phi(\theta_1) = \theta_2$. Let $C' \leq C$ be a finite
    index subgroup normalizing both $\theta_1$ and $\theta_2$. By
    Lemma \ref{shadowaction}, applied with $\theta' = \theta_1 \cap
    \theta_2$ and $P = C'$, there is some finite index subgroup $C''
    \leq C'$ so that
    \begin{equation} \label{inneqn} f c f^{-1} c^{-1} \in \theta_1 \cap
      \theta_2 \end{equation} for all $c \in C''$. Because $\F$ is
    normal in $\U$, for all $c\in C''$ we have $f c f^{-1} c^{-1} \in
    \F$. By (\ref{inneqn}) and the fact that $\theta \cap \F =
    \Fitt(\Lambda)$, for all $c\in C''$ we have
    \begin{equation} \label{inneqn2} f c f^{-1} c^{-1} \in
      \Fitt(\Lambda) \cap \theta_1 \cap \theta_2. \end{equation}
    
    Let $F_1 = \theta_1 \cap \Fitt(\Lambda)$ and $F_2 = \theta_2 \cap
    \Fitt(\Lambda)$. Then $\Phi$ induces an isomorphism $F_1 \to
    F_2$. Because $C''$ normalizes both $F_1$ and $F_2$, we may form
    subgroups $\Lambda_1 = F_1 C''$ and $\Lambda_2 = F_2 C''$, both of
    which are of finite index in $\Lambda$. We claim that $\Phi$
    induces an isomorphism $\Lambda_1 \to \Lambda_2$. Suppose $f_1 \in
    F_1$ and $c_1 \in C''$. Then $f f_1 f^{-1} \in F_2$ by definition
    of $\theta_1$ and $\theta_2$, and $f c_1 f^{-1} = f_2 c_1$ for
    some $f_2 \in F_2$ by (\ref{inneqn2}). Therefore
    \[
    f f_1 c_1 f^{-1} = f f_1 f^{-1} f c_1 f^{-1} \in F_2 C''.
    \]
    It follows that $\Phi$ induces an injection $\Lambda_1 \to
    \Lambda_2$. Note that (\ref{inneqn2}) holds for all $c\in C''$
    with $f$ replaced by $f^{-1}$. Similar reasoning then gives that
    $\Phi^{-1}$ induces an injection $\Lambda_2 \to \Lambda_1$. Thus
    $\Phi$ induces a partial automorphism $\Lambda_1 \to \Lambda_2$ of
    $\Lambda$, and so induces a commensuration of $\Gamma$. This
    completes the proof of Claim 1.

  \bold{Claim 2:} $\as(\Q) \leq \xi(\Comm(\Gamma))$.

  \italics{Proof of Claim 2:}
  Suppose $\Phi \in \as(\Q)$. Then $\Phi$ corresponds to a
  $\Q$-defined map under the restriction $\as \to \Aut(\U)$, so $\Phi$
  induces a partial automorphism $\theta_1 \to \theta_2$ of $\theta$
  by Theorem \ref{nilpotentcomm}. The map $C \to C_u$ is a
  homomorphism. Define a finite index subgroup
  \[
  C_1 = \left \{ c \in C \suchthat c_u \in \theta_1 \right \}
  \leq C. 
  \]
  Take any $c_1 \in C_1$, and write $c_1 = u_1 s$ for $u_1 \in
  \theta_1$ and $s\in \bft$. Since $\Phi \in \ahf$, there is some $f
  \in \F(\Q)$ so that $\Phi( u_1 ) = f u_1$. Since $\Phi \in \as$,
  we have
  \[
  \Phi( c_1 ) = \Phi(u_1) \Phi( s ) = f u_1 s = f c_1 . 
  \] 
  Both $\Phi(u_1)$ and $u_1$ are in $\theta$, so $f \in \theta \cap
  \F = \Fitt(\Lambda)$. Therefore $\Phi(c_1) \in \Lambda$. Since
  $\Phi$ preserves $\bft$, it also preserves $\D$. Therefore
  $\Phi(C_1) \leq C$ since $\Lambda \cap \D = C$.
  
  Define
  \[
  C_2 = \left \{ c \in C \suchthat c_u \in \theta_2 \right \}
  \leq C. 
  \]
  It is evident from the definitions of $\theta_1$ and $\theta_2$
  that $\Phi(C_1) \leq C_2$. Applying the same logic as above to
  $\Phi^{-1}$, we conclude that $\Phi(C_1) = C_2$. Therefore $\Phi$
  induces a partial automorphism $C_1 \to C_2$ of $C$.
  
  Since $\Phi$ preserves $\F$, it induces a partial automorphism
  $F_1 \to F_2$ of $\Fitt(\Lambda)$. Without loss of generality,
  suppose $F_1$ is characteristic in $\Fitt(\Lambda)$. Then $F_1
  C_1$ and $F_2 C_2$ are both finite index subgroups of
  $\Lambda$. So $\Phi$ induces a partial automorphism $F_1 C_2 \to
  F_2 C_2$ of $\Lambda$, and hence a commensuration of
  $\Gamma$. This completes the proof of Claim 2.

  \medskip Claims 1 and 2 show that $\hat \xi$ is surjective, and
  therefore $\hat \xi$ exhibits an isomorphism $\Comm_{\bfh\mid
    \F}(\Gamma) \cong (\ahf)_\Q$. This completes the proof of
  Theorem \ref{commhfdescription}.
\end{proof}

\begin{proof}[Proof of Theorem \ref{introsolv}:]
  Let $\bfh$ be the virtual algebraic hull of $\Gamma$. By Lemma
  \ref{solvcommembedding} there is an embedding
  \[
  \xi : \Comm(\Gamma) \to \Aut(\bfh)(\Q),
  \]
  where $\Aut(\bfh)$ has the structure of an algebraic group as
  described in Section \ref{autstructure}. Let $\mathcal{A}_\Gamma$ be
  the Zariski-closure of $\xi(\Comm(\Gamma))$ in $\Aut(\bfh)$. Then
  $\mathcal{A}_\Gamma$ is a $\Q$-defined algebraic group by
  Proposition \ref{closuredefinition}. Now take any $\Psi \in
  \mathcal{A}_\Gamma(\Q)$. Take any element $\Phi \in \xi( \Comm(
  \Gamma ) )$ so that $\Psi \circ \Phi^{-1} \in
  \mathcal{A}_\Gamma^0(\Q)$. We have $\mathcal{A}_\Gamma^0 \leq \ahu$
  by Lemma \ref{ahufinite} and then $\mathcal{A}_\Gamma^0 \leq \ahf$
  by Lemma \ref{commhffinite}. Therefore $\Psi \circ \Phi^{-1} \in
  \ahf(\Q)$. It follows from Theorem \ref{commhfdescription} that
  $\Psi \in \xi( \Comm( \Gamma ) )$, hence the isomorphism
  \[
  \Comm(\Gamma) \cong \mathcal{A}_\Gamma(\Q).
  \]

  We have only to show that the image of $\Aut(\Gamma)$ in
  $\Aut(\bfh)$ is commensurable with $\mathcal{A}_\Gamma(\Z)$.  Let $F
  = \Fitt(\Gamma)$ and define
  \[
  A_{\Gamma \mid F} = \left\{ \phi \in \Aut(\Gamma) \suchthat
    \restr{ \phi }{ \Gamma / F} = \restr{\Id}{ \Gamma / F }
  \right\}. 
  \]
  The proof of Lemma \ref{commhffinite} shows that $A_{\Gamma \mid F}$
  is finite index in $\Aut(\Gamma)$; see also
  \cite[9.1]{bauesgrunewald}.  The group $A_{\Gamma \mid F}$ is
  commensurable with $\mathcal{A}_{\bfh\mid \F}(\Z)$ by
  \cite[8.9]{bauesgrunewald}, so the result follows.
\end{proof}

We conclude this section with a result relating the structure of
$\mathcal{A}_\Gamma$ to that of $\Aut(G)$ for certain solvable groups
$G$. This strengthens the analogy with semisimple groups; compare with
Theorem \ref{semicomm} below.

\begin{definition} \label{unipconn} Let $\Nil(G)$ denote the maximal
  normal nilpotent subgroup of $G$. A solvable Lie group $G$ is {\em
    unipotently connected} if $\Nil(G)$ is connected.
\end{definition}

\begin{proposition} \label{thesisadded1} Suppose $G$ is a connected,
  simply-connected, unipotently connected solvable Lie group. Let
  $\Gamma\leq G$ be a Zariski-dense lattice and let
  $\mathcal{A}_\Gamma$ be the group such that $\mathcal{A}_\Gamma(\Q)
  \cong \Comm(\Gamma)$. Then
  \[
  \mathcal{A}_\Gamma(\R) \doteq \Aut(G).
  \]
\end{proposition}

\begin{proof}
  Let $\bfh$ be the real algebraic hull of $G$. By
  \cite[3.11]{bauesklopsch} the group $\bfh$ is also an algebraic hull
  for $\Gamma$. It further follows that $\F(\R) = \Nil(G)$ by
  \cite[5.4]{bauesklopsch}. For any $\Phi\in \ahf(\R)$, there is some
  $f\in \F(\R)$ such that $\Phi(g) = fg$ for all $g\in \bfh$.
  Therefore $\ahf(\R)$ preserves $G \leq \bfh(\R)$, and so $\ahf(\R)
  \leq \Aut(G)$. In fact, $[\Aut(G) : \ahf(\R)] < \infty$ by
  \cite[6.9]{bauesklopsch}. The result follows because $\ahf$ is a
  subgroup of finite index in $\mathcal{A}_\Gamma$.
\end{proof}

Every lattice in a connected, simply-connected solvable Lie group
virtually embeds as a Zariski-dense lattice in a connected,
simply-connected, unipotently connected solvable Lie group $G'$
(cf. \cite[5.3]{bauesklopsch}). Therefore we have:

\begin{corollary} \label{thesisadded2} Let $\Gamma$ be a lattice in a
  connected, simply-connected Lie group $G$. Let $\mathcal{A}_\Gamma$
  denote the algebraic group such that $\mathcal{A}_\Gamma(\Q) =
  \Comm(\Gamma)$. Then $\Gamma$ virtually embeds as a lattice in a Lie
  group $G'$ such that $\mathcal{A}_\Gamma(\R) \doteq \Aut(G')$.
\end{corollary}

\section{Commensurations of lattices in semisimple
  groups} \label{semisection} 

Abstract commensurators of lattices in semisimple Lie groups not
isogenous to $\PSL_2(\R)$ are fairly well understood, by work of
Borel, Mostow, and Margulis. For example, see the first section of
\cite{acampoburger}. We recall the basic results here for
completeness.

\subsection{Arithmetic lattices in semisimple groups}
\begin{definition} \label{arithmeticdefn} Suppose $\Gamma \leq S$ is a
  lattice in a semisimple Lie group with trivial center and no compact
  factors. We say that $\Gamma$ is {\em arithmetic} if there is a
  $\Q$-defined semisimple algebraic group $\bfs$ and a surjective
  homomorphism $f : \bfs(\R)^0 \to S$ with compact kernel such that $f(
  \bfs(\Z) \cap \bfs(\R)^0 )$ and $\Gamma$ are commensurable. 
\end{definition}
Note that $\bfs$ may be chosen to be simply-connected, and that
$\Gamma \doteq \bfs(\Z)$ by Proposition \ref{resfinite}. Hence, to
compute the abstract commensurators of arithmetic lattices in
semisimple Lie groups, it suffices to consider groups of the form
$\bfs(\Z)$ for a simply-connected $\Q$-defined semisimple algebraic
group $\bfs$.

Recall that a $\Q$-defined, connected, semisimple algebraic group
$\bfs$ is {\em without $\Q$-compact factors} if there is no
nontrivial, $\Q$-defined, connected, normal subgroup $\bfn \leq \bfs$
such that $\bfn(\R)$ is compact. Note that given any $\Q$-defined
connected, simply-connected, semisimple algebraic group, there is a
$\Q$-defined, connected, simply-connected, semisimple algebraic group
$\bfs'$ without $\Q$-compact factors such that $\bfs(\Z)$ and
$\bfs'(\Z)$ are abstractly commensurable.

If $\bfs$ is a $\Q$-defined semisimple algebraic group,
then $\Aut(\bfs)$, the group of automorphisms of $\bfs$ as an
algebraic group, has the structure of a $\Q$-defined algebraic
group such that $\Aut(\bfs)_\Q \cong \Aut(\bfs)(\Q)$; see
\cite[5.7.2]{tits}.

\begin{proposition} \label{semicommqpointspart} Suppose $\bfs$ is a
  $\Q$-defined, connected, simply-connected, semisimple algebraic group
  without $\Q$-compact factors. Then there is a canonical inclusion
  \[
  \Xi : \Aut( \bfs )(\Q) \into \Comm( \bfs(\Z) ).
  \]
\end{proposition}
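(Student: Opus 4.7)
The plan is to construct $\Xi$ explicitly by restriction, verify it is a well-defined group homomorphism, and then deduce injectivity from the Borel density theorem.

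Given $\Phi \in \Aut(\bfs)(\Q)$, the first step is to produce a finite-index subgroup $\Gamma_1 \leq \bfs(\Z)$ such that $\Phi(\Gamma_1) \leq \bfs(\Z)$ is itself of finite index in $\bfs(\Z)$. Fix a $\Q$-defined embedding $\bfs \hookrightarrow \GL_n$. Since $\Phi$ and $\Phi^{-1}$ are $\Q$-defined morphisms, there is a positive integer $N$ so that the polynomials defining them have all coefficients in $\Z[1/N]$. At every prime $p \nmid N$, both $\Phi$ and $\Phi^{-1}$ restrict to mutually inverse bijections $\bfs(\Z_p) \to \bfs(\Z_p)$. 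At each prime $p \mid N$, continuity in the $p$-adic topology, together with the fact that any automorphism of the algebraic group $\bfs$ fixes the identity, shows that a sufficiently deep principal congruence subgroup of $\bfs(\Z_p)$ is carried into $\bfs(\Z_p)$ by both $\Phi$ and $\Phi^{-1}$ (concretely, expanding $\Phi$ about the identity, the denominators introduced by $\Phi$ are absorbed by a large enough power of $p$). Intersecting over the finitely many bad primes yields a principal congruence subgroup $\Gamma_1 \leq \bfs(\Z)$ with $\Phi(\Gamma_1) \leq \bfs(\Z)$, and a symmetric argument for $\Phi^{-1}$ ensures that $\Phi(\Gamma_1)$ has finite index in $\bfs(\Z)$. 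Define
\[
\Xi(\Phi) = [\Phi|_{\Gamma_1}] \in \Comm(\bfs(\Z)).
\]

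Independence of the choice of $\Gamma_1$ is immediate, since any two admissible choices agree on their intersection, which is again of finite index in $\bfs(\Z)$. The homomorphism property $\Xi(\Phi \circ \Psi) = \Xi(\Phi) \circ \Xi(\Psi)$ holds because, after restricting to a common finite-index domain on which all partial automorphisms are simultaneously defined, both sides are represented by the same map.

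For injectivity, suppose $\Xi(\Phi)$ is trivial, so $\Phi$ agrees with the identity on some finite-index subgroup $\Gamma_1 \leq \bfs(\Z)$. Any such $\Gamma_1$ contains a normal finite-index subgroup $\Gamma_0 \leq \bfs(\Z)$; its Zariski closure $\overline{\Gamma_0}$ is normal in the Zariski closure of $\bfs(\Z)$, which by the Borel density theorem (Theorem \ref{boreldensity}) is all of $\bfs$. Since $\overline{\Gamma_0}$ has finite index in $\bfs$ and $\bfs$ is connected, $\overline{\Gamma_0} = \bfs$. Therefore $\Phi$ and $\Id$ agree on a Zariski-dense subset of $\bfs$, forcing $\Phi = \Id$ as algebraic automorphisms. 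I expect the main technical obstacle to be the congruence-subgroup construction of $\Gamma_1$: this is a routine bounded-denominator argument, but one has to treat carefully the finitely many primes dividing $N$ where $\Phi$ fails to preserve $p$-integrality automatically, via a $p$-adic continuity or Taylor-expansion argument about the identity.
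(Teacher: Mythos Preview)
Your proof is correct and follows the same strategy as the paper: produce a commensuration from a $\Q$-defined automorphism and then invoke Borel density for injectivity. The only difference is that the paper dispatches the first step by citing \cite[10.14]{raghunathan} (a $\Q$-defined morphism of algebraic groups carries arithmetic subgroups to arithmetic subgroups), whereas you unpack this via an explicit bounded-denominator/congruence-subgroup argument; your injectivity argument is identical to the paper's, with the extra passage to a normal finite-index subgroup being harmless but unnecessary, since any finite-index subgroup of $\bfs(\Z)$ is already Zariski-dense by connectedness of $\bfs$.
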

\begin{proof} If $\Phi \in \Aut(\bfs)(\Q)$, then $\Phi$ is a
  $\Q$-defined automorphism of $\bfs$. Arithmetic groups are mapped to
  arithmetic groups under $\Q$-defined isomorphism of algebraic groups
  (see e.g.\ \cite[10.14]{raghunathan}), so $\Phi$ induces a
  commensuration of $\bfs(\Z)$.  Because $\bfs(\Z)$ is Zariski-dense
  in $\bfs$ by Theorem \ref{boreldensity}, the induced map $\Xi :
  \Aut(\bfs)(\Q) \to \Comm( \bfs(\Z) )$ is injective.
\end{proof}

The following consequence of Mostow--Prasad--Margulis rigidity is
likely known to experts. We include a proof, having found no reference
in the literature, using the techniques of
\cite{grunewaldplatonovrigidity}.
\begin{theorem} \label{semisimplecomm} Let $\bfs$ be a $\Q$-defined,
  connected, simply-connected, semisimple algebraic group without
  $\Q$-compact factors. Suppose that if $F$ is a factor of
  $\bfs(\R)^0$ locally isomorphic to $\PSL_2(\R)$ then $\bfs(\Z)$
  projects to a non-discrete subgroup of $F$. Then the inclusion
  \[
  \Xi : \Aut(\bfs) (\Q) \to \Comm( \bfs(\Z) )
  \]
  is an isomorphism.
\end{theorem}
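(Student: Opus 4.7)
The map $\Xi$ is injective by Proposition \ref{semicommqpointspart}, so the plan is to establish surjectivity. Given $[\phi] \in \Comm(\bfs(\Z))$ represented by an isomorphism $\phi : \Gamma_1 \to \Gamma_2$ of finite-index subgroups of $\bfs(\Z)$, the strategy is to reduce to the $\Q$-simple case covered by Theorem \ref{semisimplerigidity}, apply that theorem factor-by-factor, and assemble the resulting $\Q$-defined isomorphisms into a single automorphism of $\bfs$.

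Write $\bfs$ as an almost direct product $\bfs = \bfs^{(1)} \cdots \bfs^{(k)}$ of $\Q$-simple, simply-connected factors and set $\Lambda_i = \bfs^{(i)}(\Z)$. After replacing $\Gamma_1, \Gamma_2$ by finite-index subgroups (which does not change $[\phi]$), I may assume $\Gamma_j = \prod_i \Gamma_j^{(i)}$ with each $\Gamma_j^{(i)}$ a finite-index subgroup of $\Lambda_i$. The key preparatory step is to show that $\phi$ respects this product structure up to a permutation $\sigma$ of $\{1, \ldots, k\}$, so that after a further shrinking, $\phi$ restricts to isomorphisms $\phi_i : \Gamma_1^{(i)} \to \Gamma_2^{(\sigma(i))}$. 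I would establish this by characterizing each commensurability class $[\Lambda_i]$ intrinsically in $\bfs(\Z)$, for instance as a minimal infinite normal commensurability class whose centralizer in $\bfs(\Z)$ is infinite; this uses the Margulis normal subgroup theorem for higher-rank $\bfs^{(i)}$ together with a more delicate group-theoretic identification (via centralizers and commensurators) for $\Q$-simple rank-one factors. Each $\bfs^{(\sigma(i))}$ must then be $\Q$-isomorphic to $\bfs^{(i)}$ because $\phi_i$ is an abstract isomorphism of finite-index subgroups of their $\Z$-points.

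With the $\phi_i$ in hand, Theorem \ref{semisimplerigidity} applies to each pair: positive real rank of $\bfs^{(i)}$ follows from the absence of $\Q$-compact factors, and the hypothesis on $\PSL_2(\R)$-factors of $\bfs(\R)^0$ descends directly to the analogous hypothesis on each $\bfs^{(i)}(\R)^0$. One obtains $\Q$-defined isomorphisms $\Phi_i : \bfs^{(i)} \to \bfs^{(\sigma(i))}$ virtually extending each $\phi_i$; their assembly $\Phi = \prod_i \Phi_i$ is a $\Q$-defined automorphism of $\bfs$, and since $\Phi$ and a representative of $[\phi]$ agree on a Zariski-dense subgroup (Theorem \ref{boreldensity}), they determine the same commensuration, so $\Xi(\Phi) = [\phi]$. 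The main obstacle is the factor-preservation step: in higher rank it is a textbook corollary of the normal subgroup theorem, but handling $\Q$-simple factors of real rank one isogenous to $\SO(1,n)$ or $\SU(1,n)$ with $n \geq 2$ requires care, since such lattices admit abundant infinite normal subgroups of infinite index and so the naive Margulis-style characterization breaks down.
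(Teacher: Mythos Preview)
Your overall architecture matches the paper's: decompose $\bfs$ into its $\Q$-simple factors, show that any commensuration permutes the factors, and then apply Theorem \ref{semisimplerigidity} factor by factor. The divergence is entirely in the factor-preservation step, and this is where your proposal has a real gap that the paper's argument avoids.

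You propose to characterize the commensurability classes $[\Lambda_i]$ intrinsically in $\bfs(\Z)$ via group-theoretic properties (minimal infinite normal classes with infinite centralizer, the normal subgroup theorem, etc.), and you correctly note that this breaks down for $\Q$-simple factors of real rank one, where lattices have many infinite normal subgroups of infinite index. You do not actually close this gap; ``a more delicate group-theoretic identification (via centralizers and commensurators)'' is a hope, not an argument. The paper sidesteps the issue with a short Zariski-closure argument that works uniformly in all ranks: given $\phi:\Gamma_1\to\Gamma_2$ with $\Gamma_1=\prod_i\Gamma_{1,i}$, fix $i$ and any $j$ with $\pi_j(\phi(\Gamma_{1,i}))$ nontrivial, let $\bfa_1,\bfa_2$ be the Zariski closures in $\bfs_j$ of $\pi_j(\phi(\Gamma_{1,i}))$ and $\pi_j(\phi(\Gamma_1^i))$. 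These commute, are $\Q$-defined, and $\bfa_1\cdot\bfa_2=\bfs_j$ by Borel density; $\Q$-simplicity of $\bfs_j$ forces $\bfa_1=\bfs_j$ and then $\bfa_2$ trivial (being connected and central). Hence for each $i$ there is a unique $j$ with $\phi(\Gamma_{1,i})\subseteq\bfs_j$, and the rest proceeds as you outline.

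The advantage of the paper's route is that it uses only Zariski density and $\Q$-simplicity, so no case analysis on rank is needed and the normal subgroup theorem is never invoked. Your approach, if it could be completed, would be a purely group-theoretic proof of factor preservation, but as written it is incomplete precisely in the rank-one case.
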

\begin{proof}
  Let $\bfs_1, \dotsc, \bfs_n$ be the $\Q$-simple factors of
  $\bfs$, so that 
  \[
  \bfs = \bfs_1 \cdot \bfs_2 \cdot \dotsb \cdot \bfs_{n-1} \cdot \bfs_n.
  \]
  For each $j$, let $\pi_j : \bfs \to \bfs_j$ be the canonical
  projection.  

  Suppose $[\phi] \in \Comm(\bfs(\Z))$. Without loss of generality, we
  may assume that $\phi : \Gamma_1 \to \Gamma_2$ is a partial
  isomorphism of $\bfs(\Z)$ where
  \[
  \Gamma_1 = (\Gamma_1 \cap \bfs_1) \cdot (\Gamma_1 \cap \bfs_2) \cdot
  \dotsb \cdot (\Gamma_1 \cap \bfs_n).
  \]
  Let
  \[
  \Gamma_{1,i} = \Gamma_1 \cap \bfs_i \quad \text{ and } \quad
  \Gamma_1^i = \Gamma_{1,1}\cdot \dotsb \cdot \Gamma_{1,i-1} \cdot
  \Gamma_{1,i+1} \cdot \dotsb \cdot \Gamma_{1,n}. 
  \]
  Note that each $\Gamma_{1,i}$ is of finite index in $\bfs_i(\Z)$.
  
  Given any $i$, choose some $j$ such that $\pi_j( \phi(\Gamma_{1,i})
  )$ is infinite. Let $\bfa_1$ be the Zariski closure of $\pi_j(
  \phi( \Gamma_{1,i} ) )$ in $\bfs_j$, and $\bfa_2$ be the Zariski
  closure of $\pi_j( \phi( \Gamma_1^i ) )$ in $\bfs_j$. Replacing
  $\Gamma_1$ with a finite index subgroup if necessary, we may assume
  both $\bfa_1$ and $\bfa_2$ are connected. Then $\bfa_1$ commutes
  with $\bfa_2$ because $\Gamma_{1,i}$ commutes with
  $\Gamma_1^i$. Note that $\pi_j ( \phi(\Gamma_{1,i}) ) \cdot \pi_j(
  \phi( \Gamma_1^i ) )$ is commensurable with $\bfs_j(\Z)$, hence
  Zariski-dense in $\bfs_j$ by Theorem \ref{boreldensity}. Therefore
  $\bfa_1 \cdot \bfa_2 = \bfs_j$. Since $\pi_j( \phi( \Gamma_{1,i} )
  )$ is infinite and $\Q$-defined, and $\bfs_j$ is $\Q$-simple, it
  must be that $\bfa_1 = \bfs_j$. Since $\bfa_1$ commutes with
  $\bfa_2$ and $\bfa_2$ is connected, it follows that $\bfa_2$ is
  trivial. Therefore $\pi_j( \phi( \Gamma_1^i ) )$ must be trivial.

  It follows that, after replacing $\Gamma_1$ with a subgroup of
  finite index, for each $i$ there is exactly one $j$ so that $\pi_j
  ( \phi ( \Gamma_{1,i} ) )$ is nontrivial. Therefore for each $i$
  there is exactly one $j$ so that the image of $\Gamma_{1,i}$ under
  $\phi$ is a subgroup of $\bfs_j$ of finite index in $\bfs_j(\Z)$. It
  follows from Theorem \ref{semisimplerigidity} that
  $\restr{\phi}{\Gamma_{1,i}}$ virtually extends to an isomorphism
  $\Phi_i : \bfs_i \to \bfs_j$. The map $\Phi : \bfs \to \bfs$ defined
  by $\restr{\Phi}{\bfs_i} = \Phi_i$ is a $\Q$-defined automorphism
  virtually extending $\phi$, and so $\Xi$ is surjective.
\end{proof}

\subsection{More general lattices in semisimple groups}

A lattice $\Gamma$ in a connected semisimple Lie group $S$ with finite
center is {\em irreducible} if the projection of $\Gamma$ to $S/N$ is
dense for every nontrivial connected normal subgroup $N \leq S$.  Let
$\Gamma \leq S$ be an irreducible lattice in a connected semisimple
Lie group with trivial center and no compact factors. The relative
commensurator $\Comm_S(\Gamma)$ satisfies a dichotomy (see
\cite{zimmer}): either $\Comm_S(\Gamma)$ contains $\Gamma$ as a
subgroup of finite index, or $\Comm_S(\Gamma)$ is dense in $S$. In
fact, it is a celebrated theorem of Margulis that this is precisely
the dichotomy of arithmeticity versus non-arithmeticity.
\begin{theorem} [Margulis, see \cite{zimmer},
  \cite{margulis}] \label{commdichotomy} Let $\Gamma \leq S$ be an
  irreducible lattice in a connected semisimple Lie group with trivial
  center and no compact factors. Then $\Comm_S(\Gamma)$ is dense in
  $S$ if and only if $\Gamma$ is arithmetic.
\end{theorem}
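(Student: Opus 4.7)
The plan is to prove the two implications separately; the forward direction (arithmetic implies dense) is essentially a direct computation, while the reverse is the deep content of Margulis' work.

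For the forward direction, suppose $\Gamma$ is arithmetic. By Definition \ref{arithmeticdefn} there exist a $\Q$-defined semisimple algebraic group $\bfs$ and a surjection $f : \bfs(\R)^0 \to S$ with compact kernel such that $f(\bfs(\Z) \cap \bfs(\R)^0)$ and $\Gamma$ are commensurable. For any $g \in \bfs(\Q) \cap \bfs(\R)^0$, the conjugate $g \bfs(\Z) g^{-1}$ is again an arithmetic subgroup of $\bfs$ commensurable with $\bfs(\Z)$; applying $f$ and using the commensurability with $\Gamma$ shows $f(g) \in \Comm_S(\Gamma)$. Since $\bfs$ is connected, $\bfs(\Q) \cap \bfs(\R)^0$ is dense in $\bfs(\R)^0$ in the Lie group topology (a standard consequence of Zariski density together with the real approximation theorem), and continuity of $f$ then yields that $\Comm_S(\Gamma)$ is dense in $S$.

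For the reverse direction, suppose $\Comm_S(\Gamma)$ is dense in $S$. If $\rrank(S) \geq 2$ then $\Gamma$ is arithmetic directly by Margulis' arithmeticity theorem, so the substantive case is $\rrank(S) = 1$, where $S$ is locally isomorphic to one of $\SO(1,n)$, $\SU(1,n)$, $\Sp(1,n)$, or $F_{4(-20)}$. I would set $\Lambda = \Comm_S(\Gamma)$ and use the action of $\Lambda$ on the commensurability class of $\Gamma$ to produce a homomorphism into the automorphism group of a certain tensor product construction on $\Gamma$. Applying Mostow-Prasad strong rigidity in the $\SO(1,n)$ and $\SU(1,n)$ cases, and Corlette/Gromov-Schoen superrigidity in the $\Sp(1,n)$ and $F_{4(-20)}$ cases, these homomorphisms virtually extend to homomorphisms of algebraic groups. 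A Galois-theoretic analysis of the trace field of $\Gamma$, driven by density of $\Lambda$, then equips $S$ with a $\Q$-structure $\bfs$ in which $\Lambda \subseteq \bfs(\Q)$ and $\Gamma$ is commensurable with $\bfs(\Z)$, exhibiting $\Gamma$ as arithmetic.

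The main obstacle is the rank one case of the reverse direction: reconstructing the number-theoretic structure from the abstract density of $\Lambda$ requires showing that the trace field of $\Gamma$ is a number field and then that the Galois conjugates of the natural embedding produce compact factors of the appropriate arithmetic data. This is precisely the content of Margulis' commensurability criterion as stated in \cite{margulis, zimmer}, and rather than reprove it here I would cite those references, having reduced the statement of Theorem \ref{commdichotomy} to the known form of that criterion together with Margulis' arithmeticity theorem in higher rank.
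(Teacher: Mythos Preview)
The paper does not prove Theorem \ref{commdichotomy} at all: it is stated as a known result of Margulis and simply cited to \cite{zimmer} and \cite{margulis}, with no argument given. Your proposal therefore goes beyond what the paper does, since you sketch the forward direction and outline ideas for the reverse before ultimately deferring to the same references.

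Your forward direction is essentially correct and is the standard argument (density of $\bfs(\Q)$ in $\bfs(\R)^0$ plus the fact that rational conjugation commensurates arithmetic subgroups). Your sketch of the reverse direction, however, is somewhat off the mark: Margulis' actual proof does not proceed by a rank dichotomy of the kind you describe. In higher rank you are right that arithmeticity follows without the density hypothesis, but in rank one the proof in \cite{margulis} uses a superrigidity theorem for representations of the dense commensurator $\Lambda$ itself, not Mostow--Prasad rigidity or Corlette/Gromov--Schoen (the latter are both unnecessary here and postdate Margulis' argument). Since you end by citing \cite{margulis, zimmer} for this direction anyway, there is no genuine error, but the heuristic you offer for how the rank one case would go does not reflect the actual mechanism.
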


We summarize the above results:
\begin{theorem} \label{semicomm} Let $\Gamma$ be an irreducible
  lattice in a noncompact connected semisimple Lie group $S$. Assume
  that $S$ is not locally isomorphic to $\PSL_2(\R)$. One of the
  following holds:
  \begin{enumerate}
  \item $\Gamma$ is arithmetic and there is a $\Q$-defined, connected,
    simply-connected, $\Q$-simple, semisimple algebraic group $\bfs$
    so that
    \[
    \Comm(\Gamma) \cong \Aut(\bfs)(\Q).
    \]
    Moreover, the group $\Aut(\Gamma)$ is commensurable with
    $\Aut(\bfs)(\Z)$. 
  \item $\Gamma$ is not arithmetic and $\Comm(\Gamma) \doteq \Gamma$.
  \end{enumerate}
\end{theorem}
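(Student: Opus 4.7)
The plan is to split into the arithmetic and non-arithmetic cases using Margulis's dichotomy (Theorem \ref{commdichotomy}), then invoke the tools already assembled in the section: rigidity for the non-arithmetic case and Theorem \ref{semisimplecomm} for the arithmetic case.

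First I would pass to $\overline{S} = S / Z(S)$, which has trivial center and no compact factors (after projecting away any compact factors of $S$, if present), and replace $\Gamma$ by its image $\overline{\Gamma}$; this changes $\Gamma$ only up to finite kernels, so by Proposition \ref{resfinite} and Corollary \ref{abcomm} the group $\Comm(\Gamma)$ is unchanged up to abstract commensurability. Irreducibility of $\Gamma$ in $S$ passes to irreducibility of $\overline{\Gamma}$ in $\overline{S}$, and the hypothesis on $\PSL_2(\R)$ is preserved. Thus I may assume from the start that $S$ has trivial center and no compact factors.

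For the non-arithmetic case, Theorem \ref{commdichotomy} says $\Comm_S(\Gamma)$ contains $\Gamma$ with finite index. Mostow--Prasad--Margulis strong rigidity extends every partial automorphism of $\Gamma$ uniquely to an automorphism of $S$, giving an injection $\Comm(\Gamma) \hookrightarrow \Aut(S)$ (uniqueness follows from Zariski-density of $\Gamma$ via Borel density). The image lies in the subgroup of $\Aut(S)$ that virtually normalizes $\Gamma$, which by the dichotomy is commensurable with $\Comm_S(\Gamma)$, hence with $\Gamma$. Since $\Inn(S) \cong S$ is of finite index in $\Aut(S)$, the image of $\Comm(\Gamma)$ in $\Aut(S)$ is commensurable with $\Gamma$, giving $\Comm(\Gamma) \doteq \Gamma$.

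For the arithmetic case, Definition \ref{arithmeticdefn} produces a $\Q$-defined semisimple algebraic group $\bfs_0$ with $f : \bfs_0(\R)^0 \to S$ surjective with compact kernel and $f(\bfs_0(\Z) \cap \bfs_0(\R)^0) \doteq \Gamma$. I may pass to the simply-connected cover, and I may quotient out $\Q$-compact factors without changing $\bfs_0(\Z)$ up to commensurability, so I obtain a $\Q$-defined, connected, simply-connected semisimple group $\bfs$ without $\Q$-compact factors with $\bfs(\Z) \doteq \Gamma$. The main subtlety is to verify $\bfs$ is $\Q$-simple: a nontrivial $\Q$-decomposition $\bfs = \bfs' \cdot \bfs''$ would produce a nontrivial proper connected normal $\R$-subgroup $N \leq S$ to which $\Gamma$ projects discretely (essentially $f(\bfs'(\R)^0)$ modulo compact kernels), contradicting irreducibility. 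Corollary \ref{abcomm} then gives $\Comm(\Gamma) \approx \Comm(\bfs(\Z))$.

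To apply Theorem \ref{semisimplecomm}, I must check that if $\bfs(\R)^0$ has a factor locally isomorphic to $\PSL_2(\R)$, then $\bfs(\Z)$ projects non-discretely to that factor. Since $\bfs$ is $\Q$-simple and $S$ is not locally $\PSL_2(\R)$, any such $\PSL_2(\R)$ factor is one of several Galois-conjugate simple factors of $\bfs(\R)^0$, and irreducibility of $\bfs(\Z)$ as an arithmetic lattice (a consequence of $\Q$-simplicity) forces dense, hence non-discrete, projection to every proper subproduct. Theorem \ref{semisimplecomm} then yields $\Comm(\Gamma) \approx \Aut(\bfs)(\Q)$. For the last assertion, $\Aut(\Gamma)$ embeds into $\Comm(\Gamma)$, and its image consists of those $\Q$-defined automorphisms that preserve $\Gamma$; by Proposition \ref{semicommqpointspart} and \cite[10.14]{raghunathan} this image is commensurable with the stabilizer of $\bfs(\Z)$ in $\Aut(\bfs)$, which is an arithmetic subgroup of $\Aut(\bfs)$ and thus commensurable with $\Aut(\bfs)(\Z)$. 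The main obstacle I anticipate is the bookkeeping needed to justify the successive reductions on $\bfs$ (to simply-connected, without $\Q$-compact factors, $\Q$-simple) while preserving the commensurability class of $\Gamma$ and the hypotheses needed to apply Theorem \ref{semisimplecomm}.
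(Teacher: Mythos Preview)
Your proposal is correct and follows essentially the same route as the paper: reduce to a centerless group with no compact factors, apply Margulis's dichotomy (Theorem~\ref{commdichotomy}), handle the non-arithmetic case via Mostow--Prasad--Margulis rigidity combined with finiteness of $\Out(S)$, and handle the arithmetic case by producing a simply-connected $\Q$-simple $\bfs$ without $\Q$-compact factors and invoking Theorem~\ref{semisimplecomm}. You are in fact more explicit than the paper in two places: you verify the $\PSL_2(\R)$-factor hypothesis of Theorem~\ref{semisimplecomm} (the paper leaves this implicit), and for the claim that $\Aut(\Gamma)$ is commensurable with $\Aut(\bfs)(\Z)$ you argue via the stabilizer of $\bfs(\Z)$ in $\Aut(\bfs)$, whereas the paper simply uses the chain $\Aut(\Gamma)\doteq\Gamma\doteq\bfs(\Z)\doteq\Aut(\bfs)(\Z)$, the first step following from Mostow rigidity and finiteness of $\Out(S)$.
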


\begin{proof}
  Suppose $\Gamma$ is arithmetic. Then there is a $\Q$-defined,
  connected, simply-connected, semisimple algebraic group $\bfs$
  without $\Q$-compact factors so that $\Gamma \doteq \bfs(\Z)$. Since
  $\Gamma$ is irreducible in $S$, the group $\bfs$ is $\Q$-simple. The
  isomorphism $\Comm(\Gamma) \cong \Aut(\bfs)(\Q)$ follows from
  Theorem \ref{semisimplecomm}. Since $\Aut(\Gamma)$ is commensurable
  with $\Gamma$ and $\Gamma$ is commensurable with $\bfs(\Z)$, the
  result follows since $\bfs(\Z)$ is commensurable with
  $\Aut(\bfs)(\Z)$.

  Now suppose $\Gamma$ is not arithmetic. Let $S' = S / Z(S)$ and $\pi
  : S \to S'$ the canonical projection. There is a finite index
  subgroup of $\Gamma$ taken faithfully to a lattice $\Gamma' \leq
  S'$. Let $N$ be the maximal compact factor of $S'$ and $S'' = S' /
  N$. Then $\Gamma'$ contains a finite index subgroup $\Gamma''$
  mapping isomorphically to a lattice $\Gamma'' \leq S''$. By
  Mostow--Prasad--Margulis rigidity (cf. \cite{mostowrigidity}),
  every commensuration of $\Gamma''$ extends to an automorphism of
  $S''$. Since $[ \Aut(S'') : \Inn(S'') ] < \infty$, where $\Inn(S'')$
  is the group of inner automorphisms of $S''$, it follows that
  $[\Comm(\Gamma'') : \Comm_{S''}(\Gamma'')] < \infty$, and hence $[
  \Comm(\Gamma'') : \Gamma''] < \infty$ by Theorem
  \ref{commdichotomy}. Since $\Gamma''$ is of finite index in
  $\Gamma$, the result follows.
\end{proof}

The case that $S = \PSL_2(\R)$ is dramatically different.
\begin{proposition} \label{psl2comm}
  Suppose $S$ is locally isomorphic to $\PSL_2(\R)$ and $\Gamma \leq
  S$ is a lattice. Then there is no faithful embedding $\Comm(\Gamma)
  \to \GL_N(\C)$ for any $N$.
\end{proposition}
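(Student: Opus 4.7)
The plan is to exhibit the alternating group $A_n$ as a subgroup of $\Comm(\Gamma)$ for every $n \geq 5$ and then derive a contradiction from Jordan's theorem on finite subgroups of $\GL_N(\C)$. Because $\Comm$ is invariant under passage to finite-index subgroups, I may first replace $\Gamma$ by a torsion-free finite-index subgroup (via Selberg's lemma) and assume that $\Gamma$ is either a non-abelian free group $F_r$ (non-cocompact case) or the fundamental group $\pi_1(\Sigma_g)$ of a closed orientable surface of genus $g \geq 2$ (cocompact case).

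The central fact I would use is that for any torsion-free centerless non-elementary hyperbolic group $\Lambda$, the natural map $\Aut(\Lambda) \to \Comm(\Lambda)$ is injective: if $\phi_1, \phi_2 \in \Aut(\Lambda)$ agree on a finite-index normal subgroup $H \triangleleft \Lambda$, then $\psi := \phi_1 \phi_2^{-1}$ fixes $H$ pointwise, and applying $\psi$ to $ghg^{-1}$ for $g \in \Lambda$ and $h \in H$ forces $g^{-1}\psi(g) \in C_\Lambda(H) = 1$. Hence it suffices to construct, for each $n$, a finite-index subgroup $\Lambda \leq \Gamma$ admitting an embedding $S_n \hookrightarrow \Aut(\Lambda)$.

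In the free case, Nielsen--Schreier yields a finite-index subgroup of $\Gamma$ isomorphic to $F_n$, and $\Aut(F_n)$ contains $S_n$ as the subgroup permuting a free basis. In the surface case, all closed orientable surface groups of genus $\geq 2$ are abstractly commensurable with $\Gamma$ (since $\pi_1(\Sigma_g)$ contains $\pi_1(\Sigma_{k(g-1)+1})$ as an index-$k$ subgroup), so $\Comm(\Gamma) \cong \Comm(\pi_1(\Sigma_n))$ for any $n \geq 2$, and I may work with the latter. One constructs a closed surface of genus $n$ by attaching $n$ tori-minus-disk to the boundary circles of a sphere with $n$ disks removed, arranged in an $S_n$-symmetric pattern; the result carries a faithful $S_n$-action permuting the tori and fixing a point in the central region, and basing $\pi_1$ at that fixed point produces $S_n \hookrightarrow \Aut(\pi_1(\Sigma_n))$.

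Finally, any faithful embedding $\iota : \Comm(\Gamma) \hookrightarrow \GL_N(\C)$ would restrict to a faithful embedding $A_n \hookrightarrow \GL_N(\C)$ for each $n \geq 5$. By Jordan's theorem, each such $A_n$ would contain an abelian normal subgroup of index bounded by some constant $J(N)$ depending only on $N$; since $A_n$ is simple and non-abelian for $n \geq 5$, this abelian subgroup must be trivial, forcing $|A_n| \leq J(N)$ and contradicting $|A_n| = n!/2 \to \infty$. The main technical hurdle is the surface case: because $\Out(\pi_1)$ does not inject into $\Comm(\pi_1)$ for centerless $\pi_1$, one must realize $S_n$ all the way inside $\Aut(\pi_1)$, which in turn requires producing a finite group action on a closed surface with a fixed basepoint.
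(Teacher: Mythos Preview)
Your argument in the free case is fine, but the surface case has a fatal gap. The claimed $S_n$-action on a genus $n$ surface with a global fixed point does not exist for $n\ge 4$. If a finite group $F$ acts faithfully on a closed hyperbolic surface $\Sigma_g$ fixing a point $p$, then $F$ embeds in the stabilizer of a lift of $p$ in $\mathbb{H}^2$, and point stabilizers in $\Isom(\mathbb{H}^2)\cong \mathrm{PGL}_2(\R)$ are conjugate into $O(2)$; hence $F$ is cyclic or dihedral. More generally, \emph{every} finite subgroup $F\le \Aut(\pi_1(\Sigma_g))$ is cyclic or dihedral: since $\pi_1(\Sigma_g)$ is centerless and torsion-free, $F$ meets $\Inn(\pi_1(\Sigma_g))$ trivially, so $E=\Inn(\pi_1(\Sigma_g))\cdot F$ is a split extension of $\pi_1(\Sigma_g)$ by $F$ with trivial centralizer; by Nielsen realization $E$ acts properly discontinuously on $\mathbb{H}^2$, and any finite subgroup of such an $E$ fixes a point and is therefore cyclic or dihedral. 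Thus $A_n$ for $n\ge 4$ never embeds in any $\Aut(\pi_1(\Sigma_g))$, and since dihedral groups have abelian subgroups of index $2$, Jordan's theorem yields no obstruction along this route.

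The paper's proof avoids this by not relying on finite subgroups at all. It uses the same injection $\Aut(\pi_1(\Sigma_g))\hookrightarrow \Comm(\pi_1(\Sigma_2))$ (via the unique-root property, equivalent to your centralizer argument), but then invokes the theorem of Farb--Lubotzky--Minsky that $\Mod^\pm(\Sigma_{g,1})\cong \Aut(\pi_1(\Sigma_g))$ admits no faithful complex representation of dimension $\le N$ once $g$ is large enough; that result uses a different obstruction (coming from configurations of Dehn twists) rather than Jordan's bound. For the free case the paper cites Formanek--Procesi that $\Aut(F_n)$ is nonlinear for $n\ge 3$, which is stronger than what your Jordan argument needs but of course also suffices.
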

\begin{proof}
  $\Gamma$ is either virtually free or virtually the fundamental group
  of a closed surface. All finitely generated free groups are
  abstractly commensurable to each other, as are all closed surface
  groups. Therefore we have that $\Comm(\Gamma)$ is isomorphic either
  to $\Comm(F_2)$ or to $\Comm( \pi_1(\Sigma_2) )$, where $F_n$ is the
  free group on $n$ letters and $\Sigma_g$ is a closed surface of
  genus $g$.

  A group $G$ has the {\em unique root property} if $x^k = y^k$
  implies $x=y$ for all $x,y\in G$ and nonzero $k$. If $G$ has the
  unique root property and $H\leq G$ is a finite index subgroup, then
  the natural map $\Aut(H) \to \Comm(G)$ is faithful (see
  \cite{odden}). It is easy to see that free groups and closed surface
  groups have the unique root property. Therefore $\Aut(F_n) \leq
  \Comm(F_2)$ for all $n\geq 2$, and $\Aut( \pi_1( \Sigma_g ) ) \leq
  \Comm( \pi_1( \Sigma_2) )$ for all $g\geq 2$.

  In \cite{formanekprocesi} it is shown that $\Aut(F_n)$ is not linear
  for any $n\geq 3$. Therefore $\Comm(F_2)$ cannot be linear. On the
  other hand, the proof of \cite[1.6]{farblubotzkyminsky} shows that
  for each $N$ there is some $g_0$ so that if $g\geq g_0$ then
  $\Mod^{\pm}( \Sigma_{g,1} )$, the extended mapping class group of
  the punctured surface of genus $g$, has no faithful complex linear
  representation of dimension less than or equal to $N$. Since
  $\Mod^{\pm}( \Sigma_{g,1} ) \cong \Aut( \pi_1( \Sigma_g ) )$, it
  follows that $\Comm( \pi_1( \Sigma_2) )$ is not linear.
\end{proof}

Nonarithmetic irreducible lattices can occur only in groups isogenous
to $\SO(1,n)$ or $\SU(1,n)$ up to compact factors. We will use this
fact in \S\ref{gensection}.

\begin{theorem}[see
  \cite{margulis},\cite{gromovschoen}] \label{arithmeticitytheorem}
  Let $S$ be a connected semisimple Lie group with trivial center and
  no compact factors. Suppose either $S = \Sp(1,n)$ for $n\geq 2$, or
  $S = F_4^{-20}$, or $\rrank(S) \geq 2$. Then every irreducible
  lattice in $S$ is arithmetic.
\end{theorem}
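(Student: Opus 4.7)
The plan is to reduce the arithmeticity statement to superrigidity, and to handle the higher-rank and rank-one hypotheses in parallel streams that feed into a common arithmeticity-from-superrigidity argument.

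First I would set up the standard framework. Let $\Gamma \leq S$ be an irreducible lattice, let $\mathfrak{s}$ be the Lie algebra of $S$, and consider the adjoint representation $\Ad: \Gamma \to \GL(\mathfrak{s})$, whose image is Zariski-dense in the adjoint group $\bfs = \Ad(S)$ viewed as a real algebraic group. The goal is to show that, after replacing $\bfs$ by an $\R$-isogenous group, there is a number field $k \subset \R$ and a $k$-structure on $\bfs$ so that $\Gamma$ is commensurable with $\bfs(\mathcal{O}_k)$. By restriction of scalars, this produces a $\Q$-defined semisimple algebraic group witnessing arithmeticity in the sense of Definition \ref{arithmeticdefn}.

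The key input in both regimes is superrigidity: every homomorphism from a finite-index subgroup of $\Gamma$ into a semisimple algebraic group $\bfh$ over a local field $\ell$, with unbounded Zariski-dense image, virtually extends to a continuous homomorphism $S \to \bfh(\ell)$. In the higher-rank case $\rrank(S) \geq 2$, this is Margulis' superrigidity theorem. In the rank-one cases $S = \Sp(1,n)$ with $n\geq 2$ or $S = F_4^{-20}$, the archimedean version is due to Corlette and the non-archimedean version is due to Gromov--Schoen; both proofs exploit the fact that these groups have Kazhdan's property (T) via equivariant harmonic maps into symmetric spaces and Bruhat--Tits buildings. The hard technical step --- and by far the main obstacle --- lies here in the rank-one case, where the harmonic map analysis and the Bochner-type vanishing argument replacing a direct algebraic superrigidity proof must be invoked as a black box.

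Given superrigidity, the arithmeticity conclusion follows by Margulis' commensurator trick. Applying non-archimedean superrigidity to every $\Q_p$-embedding of the trace field $k = \Q(\mathrm{tr}\,\Ad(\gamma) : \gamma \in \Gamma)$ forces $\Ad(\Gamma)$ to be bounded in $\bfs(\Q_p)$ for every finite place, so after conjugation $\Ad(\Gamma)$ lies in $\bfs(\mathcal{O}_k)$; applying archimedean superrigidity to each Galois conjugate $\sigma : k \to \C$ that does not land in $\R$ forces the conjugate group $\sigma(\Ad(\Gamma))$ to have compact closure, so in the restriction of scalars $\mathrm{Res}_{k/\Q}(\bfs)$ the image of $\Gamma$ is discrete and cocompact mod the compact factors. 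This exhibits $\Gamma$ as commensurable with the integer points of a $\Q$-defined semisimple group mapping onto $S$ with compact kernel, which is exactly arithmeticity. The last piece needed is finite generation of $\Gamma$, available because $S$ has property (T) in the rank-one cases under consideration and by Kazhdan--Margulis in the higher-rank case.
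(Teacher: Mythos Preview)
The paper does not prove this theorem; it is stated with a citation to \cite{margulis} and \cite{gromovschoen} and used as a black box. Your outline is precisely the standard route in those references: Margulis' superrigidity in higher rank, the Corlette and Gromov--Schoen superrigidity theorems in the quaternionic and Cayley hyperbolic cases, followed by the deduction of arithmeticity from superrigidity via the trace-field and restriction-of-scalars argument. One terminological quibble: what you call ``Margulis' commensurator trick'' is usually the name for the separate argument showing that density of the commensurator implies arithmeticity (Theorem \ref{commdichotomy}); the deduction you actually sketch is the direct superrigidity-implies-arithmeticity argument, which is distinct. Otherwise your plan matches the cited literature.
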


\subsection{Example: $\PGL_n(\Z)$}
\label{pslnsection}

Consider the algebraic group $\PGL_n$ for $n\geq 3$. The group
$\PGL_n(\R)^0$ is a semisimple Lie group, containing $\PGL_n(\Z) \cap
\PGL_n(\R)^0$ as a lattice.  By Theorem \ref{semisimplecomm} we have
\[ \Comm( \PGL_n(\Z) ) \cong \Aut( \PGL_n )(\Q).\]
Let $\tau : \PGL_n \to \PGL_n$ be the automorphism given by $\tau(A) =
(A^{-1})^t$. Then $\PGL_n$ acts on itself faithfully by conjugation,
and there is a decomposition
\[ \Aut(\PGL_n) = \PGL_n \rtimes \cyc{\tau}.\] Since $\tau$ preserves
$\PGL_n(\Z)$, there is an isomorphism
\begin{equation} \label{commpgl}
  \Comm( \PGL_n(\Z) ) \cong \PGL_n(\Q) \rtimes \cyc{\tau}.
\end{equation}

\begin{remark} Note that $\PSL_n(\R) = \PGL_n(\R)^0$ and $\PSL_n(\Z)
  \doteq \PGL_n(\Z)$, so it follows from equation (\ref{commpgl}) the
  above that
  \[ \Comm( \PSL_n(\Z) ) \doteq \PGL_n(\Q).\] In particular, $\Comm(
  \PSL_n(\Z) )$ is {\em not} commensurable with the group 
  \[ \PSL_n(\Q) = \SL_n(\Q) / Z( \SL_n(\Q) ).
  \]
  To
  understand this precisely, consider the $\Q$-defined surjection of
  algebraic groups $\pi : \SL_n \to \PGL_n$. The kernel of $\pi$ is
  isomorphic to the multiplicative group of order $n$, denoted
  $\mu_n$. By definition, $\PSL_n(\Q) = \pi( \SL_n(\Q) )$. As in
  \cite[2.2.3]{platonovrapinchuk}, the exact sequence of $\Q$-defined
  algebraic groups
  \[
  1 \to \mu_n \to \SL_n \to \PGL_n \to 1 
  \]
  gives rise to a long exact sequence of cohomology groups
  \[
  1 \to \mu_n(\Q) \to \SL_n(\Q) \to \PGL_n(\Q) \to H^1( \overline{\Q} /
  \Q, \mu_n ) \to 1.
  \]
  There is an isomorphism $H^1( \overline{\Q} / \Q , \mu_n ) \cong \Q^*
  / (\Q^*)^n$. This is infinitely generated for $n \geq 2$, hence
  $[\PGL_n(\Q) : \PSL_n(\Q)] = \infty$.
\end{remark}

\section{Commensurations of general lattices} 
\label{gensection}
Suppose $\Gamma$ is a lattice in a connected Lie group $G$ which is
not necessarily either solvable or semisimple. Our main result is:
\begin{shorttheoremstatement}
  Suppose $G$ is a connected, linear Lie group with connected,
  simply-connected solvable radical. Suppose $\Gamma \leq G$ is a
  lattice with the property that there is no surjection $\phi: G \to
  H$ to any group $H$ locally isomorphic to any $\SO(1,n)$ or
  $\SU(1,n)$ so that $\phi(\Gamma)$ is a lattice in $H$.  Then:
  \begin{enumerate}
  \item $\Gamma$ virtually embeds in a $\Q$-defined algebraic group
    $\bfg$ with Zariski-dense image so that every commensuration
    $[\phi] \in \Comm(\Gamma)$ induces a unique $\Q$-defined
    automorphism of $\bfg$ virtually extending $\phi$.
  \item There is a $\Q$-defined algebraic group
    $\mathcal{B}$ so that
    \[
    \Comm(\Gamma) \cong \mathcal{B}(\Q)
    \] 
    and the image of $\Aut(\Gamma)$ in $\mathcal{B}$ is commensurable
    with $\mathcal{B}(\Z)$.
  \end{enumerate}
\end{shorttheoremstatement}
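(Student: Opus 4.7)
The plan is to build a single $\Q$-defined algebraic group $\bfg$ that simultaneously houses the solvable and semisimple parts of $\Gamma$. Set $R = \Rad(G)$ and $\Gamma_R = \Gamma \cap R$; because $R$ is simply-connected solvable, $\Gamma_R$ is a strongly polycyclic lattice in $R$ and $\Gamma_S := \Gamma/\Gamma_R$ is a lattice in the semisimple group $G/R$. First I would verify that $\Gamma_R$ is strongly commensuristic in $\Gamma$, by characterizing it abstractly as the maximal normal virtually polycyclic subgroup of $\Gamma$ (so any partial automorphism must preserve it). Lemma \ref{inducedmaps} then yields canonical induced commensurations on both $\Gamma_R$ and $\Gamma_S$.

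Next, form the virtual algebraic hull $\bfh$ of $\Gamma_R$ via Lemma \ref{vah}. The hypothesis on $G$, combined with Theorem \ref{arithmeticitytheorem}, forces $\Gamma_S$ to be arithmetic, so one can find a $\Q$-defined, connected, simply-connected, semisimple algebraic group $\bfs$ without $\Q$-compact factors with $\bfs(\Z) \doteq \Gamma_S$. Conjugation in $\Gamma$ combined with Corollary \ref{solvcommembedding} gives a homomorphism $\Gamma_S \to \Aut_\Q(\bfh)$. The central technical step is to promote this to a $\Q$-defined algebraic morphism $\alpha : \bfs \to \Aut(\bfh)$: I plan to apply Margulis--Corlette superrigidity (permitted by the stated hypothesis) to extend the representation continuously to $\bfs(\R)^0$, use Proposition \ref{repextension} to obtain an $\R$-defined morphism $\bfs \to \Aut(\bfh)$, and then use Zariski density of $\bfs(\Z)$ together with the fact that its image lies in $\Aut(\bfh)(\Q)$ to descend to $\Q$. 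With $\alpha$ in hand, Lemma \ref{Qproduct} constructs the $\Q$-defined semidirect product $\bfg = \bfh \rtimes_\alpha \bfs$, and a finite index subgroup of $\Gamma$ embeds with Zariski-dense image in $\bfg(\Q)$ by Borel density and the defining properties of the hull.

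For part (1), given $[\phi] \in \Comm(\Gamma)$, Theorem \ref{commhfdescription} (paired with Theorem \ref{introsolv}) supplies $\Phi_\bfh \in \Aut_\Q(\bfh)$ virtually extending the induced commensuration of $\Gamma_R$, and Theorem \ref{semisimplecomm} applied to $\bfs(\Z)$ supplies $\Phi_\bfs \in \Aut_\Q(\bfs)$ virtually extending the induced commensuration of $\Gamma_S$. The compatibility $\Phi_\bfh \circ \alpha(s) = \alpha(\Phi_\bfs(s)) \circ \Phi_\bfh$ holds on the Zariski-dense subgroup $\Gamma_S \subseteq \bfs$ (because $\phi$ is a group homomorphism on $\Gamma$), and therefore on all of $\bfs$; hence $(\Phi_\bfh, \Phi_\bfs)$ assembles into a $\Q$-defined automorphism $\Phi \in \Aut_\Q(\bfg)$ that virtually extends $\phi$. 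Uniqueness follows from Zariski density of $\Gamma$ in $\bfg$.

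For part (2), following Theorem \ref{introsolv}, let $\mathcal{B}$ be the Zariski closure of the image of $\Comm(\Gamma)$ inside the $\Q$-defined algebraic group $\Aut(\bfg)$. To identify $\mathcal{B}(\Q)$ with that image, one analyzes $\mathcal{B}^0$: rigidity of tori (Lemma \ref{toririgidity}) forces it to act trivially on semisimple quotients and on the maximal torus of $\bfh$, so its $\Q$-points are controlled by Theorem \ref{commhfdescription} and already lie in the image, while finite-index coset representatives are realized by the commensurations extended in part (1); the additional commensurations coming from $H^1(\Gamma, \Q)$ noted in the introduction are absorbed into the unipotent factor of $\mathcal{B}^0$ via the same mechanism as in the solvable case. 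Commensurability of the image of $\Aut(\Gamma)$ with $\mathcal{B}(\Z)$ follows by combining the $\Z$-point statements of Theorems \ref{introsolv} and \ref{semicomm} through the semidirect product structure. The main obstacle throughout is the second paragraph --- constructing $\alpha$, descending it to $\Q$, and verifying the compatibility needed to glue the solvable and semisimple extensions into a single automorphism of $\bfg$ --- which is precisely the technical content flagged in the introduction.
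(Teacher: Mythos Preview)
Your overall architecture matches the paper's, but there is a genuine gap in the extension step for part~(1). You extract $\Phi_\bfh$ and $\Phi_\bfs$ from the induced commensurations on $\Gamma_R$ and on the quotient $\Gamma_S$, and then assert that the pair $(\Phi_\bfh,\Phi_\bfs)$ is compatible and virtually extends $\phi$. Both claims fail in general. Writing $\phi(1,\gamma_s)=(c(\gamma_s),\bar\gamma_s)$ for $\gamma_s\in\Gamma_s$, there is no reason for the ``cocycle'' $c$ to vanish. Unwinding the homomorphism condition gives
\[
\Phi_\bfh\circ\alpha(\gamma_s)\;=\;\Inn_{c(\gamma_s)}\circ\alpha(\Phi_\bfs(\gamma_s))\circ\Phi_\bfh,
\]
which is your compatibility identity only when each $c(\gamma_s)$ is central in $\bfh$. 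And even if $(\Phi_\bfh,\Phi_\bfs)$ did assemble to an automorphism of $\bfg$, it would send $(1,\gamma_s)$ to $(1,\Phi_\bfs(\gamma_s))$, whereas $\phi(1,\gamma_s)=(c(\gamma_s),\bar\gamma_s)$. In short, a commensuration of $\Gamma$ need not preserve the Levi subgroup $\bfs\leq\bfg$, and your argument silently assumes it does.

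The paper repairs this as follows. One first shows, using a vanishing theorem of Starkov, that the Zariski closure $\bfl$ of $\phi(\Gamma_1\cap\Gamma_s)$ in $\bfg$ is semisimple; this is precisely where the $\SO(1,n)/\SU(1,n)$ hypothesis is invoked. Theorem~\ref{levimostow} then produces $u\in\U(\Q)$ with $u\,\bfl\,u^{-1}\subseteq\bfs$. The key technical lemma --- proved with the unipotent shadow and Lemma~\ref{shadowaction}, parallel to Claim~1 in Theorem~\ref{commhfdescription} but with $\Gamma_s$ in the role of $C$ --- is that conjugation by any $u\in\U(\Q)$ is itself a commensuration of $\Gamma$. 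Hence $\Inn_u\circ\phi$ is a commensuration that \emph{does} virtually preserve $\bfs$, and the correct extension is $\Phi=\Inn_{u^{-1}}\circ(\Phi_R,\Phi_S)$. Your sketch omits this Levi-conjugation correction entirely; without it neither the compatibility nor the extension claim goes through. (A smaller point: the paper builds $\alpha$ not via superrigidity on $\Gamma_S\to\Aut(\bfh)$ but by first reducing to unipotently connected $R$, so that $\bfh$ is simultaneously the real algebraic hull of $R$ and the continuous $S$-action extends by functoriality; your superrigidity route may also work but needs care with compact corrections.)
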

The proof of Theorem \ref{shorttheorem} proceeds in four steps:
\begin{enumerate}
\item Construct the algebraic group $\bfg$, called the {\em virtual
    algebraic hull} of $\Gamma$, such that $\Gamma$ virtually embeds in
  $\bfg$ with Zariski-dense image.
\item Show that commensurations of $\Gamma$ induce $\Q$-defined
  automorphisms of $\bfg$.
\item Show that $\Aut(\bfg)$ has the structure of an algebraic group,
  and that $\Comm(\Gamma)$ is realized as the $\Q$-points of a
  $\Q$-defined subgroup of $\Aut(\bfg)$.
\item Show that the image of $\Aut(\Gamma)$ in $\Aut(\bfg)$ is
  commensurable with $\mathcal{B}(\Z)$.
\end{enumerate}
\begin{proof}[Proof of Theorem \ref{shorttheorem}:] Let $\Gamma$ be as
  in the theorem. Let $R$ be the solvable radical of $G$.
  
  \bold{Step 1:\ (Construction of virtual algebraic hull).} We will
  construct $\bfg$ as the semidirect product of a solvable group
  $\bfh$ with a semisimple group $\bfs$. Roughly speaking, $\bfh$ is
  the virtual algebraic hull of the ``solvable part'' of $\Gamma$,
  while $\bfs$ is a $\Q$-defined semisimple group without $\Q$-compact
  factors such that the ``semisimple part'' of $\Gamma$ is abstractly
  commensurable with $\bfs(\Z)$. To make this precise, we modify the
  Lie group $G$ and lattice $\Gamma$ as follows.

  Because $G$ is linear, there is a connected semisimple subgroup
  $S\leq G$ so that $G = R \rtimes S$. Let $\bfs'$ be a $\Q$-defined
  linear algebraic group so that $S = \bfs'(\R)^0$. There is a
  simply-connected algebraic group $\tilde \bfs'$ and a surjection
  $\pi: \tilde \bfs' \to \bfs'$ with finite central kernel. Let
  $\tilde S = \tilde \bfs'(\R)^0$. Then $\pi: \tilde S \to S$ is a
  finite covering map with central kernel. The lattice $\Gamma \leq
  R\rtimes S$ lifts to a lattice $\tilde \Gamma \leq R\rtimes \tilde
  S$, which is commensurable with $\Gamma$ by Proposition
  \ref{resfinite}. Replacing $R \rtimes S$ by $R \rtimes \tilde S$ and
  $\Gamma$ by $\tilde \Gamma$, we may assume that no finite cover of
  the semisimple quotient of $G$ has a linear representation,
  i.e. that $G$ is {\em algebraically simply-connected} (cf.\
  \cite[9.4]{witte}).

  Let $K$ be the maximal compact quotient of $S$ such that $\Gamma$
  projects to a finite subgroup of $K$. Because $G$ is algebraically
  simply-connected, $K$ may be identified with a subgroup of $S$, and
  there is a subgroup $S' \leq S$ so that $S = S' \times K$. Then
  $\Gamma \cap (R \rtimes S')$ is of finite index in $\Gamma$, so we
  may replace $S$ by $S'$ and assume that $\Gamma$ projects densely
  into the maximal compact factor of $S$. It follows by
  \cite[4.5]{starkovvanishing} that, passing to a finite index
  subgroup of $\Gamma$, we have chosen $S \leq G$ so that $\Gamma =
  (\Gamma \cap R)(\Gamma \cap S)$. Let $\Gamma_r = \Gamma\cap R$ and
  $\Gamma_s = \Gamma \cap S$. This makes precise our notions of
  ``solvable'' and ``semisimple'' parts of $\Gamma$.

  We now want to find a $\Q$-defined algebraic
  group $\bfs$ without $\Q$-compact factors so that $\Gamma_s$ is
  abstractly commensurable with $\bfs(\Z)$. Because $S$ is
  algebraically simply-connected, there is a decomposition $S = S_1
  \times \dotsb \times S_k$ so that $\Gamma_s$ virtually decomposes as
  $\Gamma_{s,1} \times \dotsb \times \Gamma_{s,k}$, where
  $\Gamma_{s,i} \leq S_i$ is an irreducible lattice for each
  $i$. Since each $\Gamma_{s,i}$ does not project to a lattice in
  $\SO(1,n)$ or $\SU(1,n)$, it follows from Theorem
  \ref{arithmeticitytheorem} that for each $i$ there is a connected
  $\Q$-defined semisimple algebraic group $\bfs_i$ and a surjection
  $\pi_i : \bfs_i(\R)^0 \to S_i$ with compact kernel so that $\pi_i(
  \bfs_i (\Z) \cap \bfs_i(\R)^0 )$ is commensurable with
  $\Gamma_{s,i}$. Set
  \[
  \bfs = \bfs_1 \times \dotsb \times \bfs_k \quad \text{ and }\quad
  \Gamma_s' = \prod_{i=1}^k \bfs_i(\Z) \cap \bfs_i(\R)^0.
  \]
  Each $\bfs_i$ is $\Q$-simple and $\bfs_i(\R)^0$ is not compact, so
  $\bfs$ is without $\Q$-compact factors.

  Our next goal is to define an action of $\bfs$ on the virtual algebraic
  hull of $\Gamma_r$.  To do this, we use the fact that the virtual
  algebraic hull of $\Gamma_r$ is a real algebraic hull for any
  unipotently connected, simply-connected solvable Lie group $R$
  containing $\Gamma_r$ as a Zariski-dense lattice. A classical
  construction may be used to produce a simply-connected solvable Lie
  group $R'$ so that $\Gamma_r$ is Zariski-dense in $R'$ and $R'$ is
  unipotently connected. To ensure that we can apply this construction
  while respecting the action of $S$, we present a proof based on
  ideas in \cite{bauesklopsch}.
  \begin{lemma}
    Suppose $G = R \rtimes S$ is a connected linear Lie group with $R$
    simply-connected solvable and $S$ semisimple. Let $\Gamma =
    (\Gamma \cap R) (\Gamma \cap S)$ be a lattice, and set $\Gamma_r =
    \Gamma \cap R$ and $\Gamma_s = \Gamma \cap S$. There is a finite
    index subgroup $\Gamma' \leq \Gamma$ of the form $\Gamma' =
    \Gamma_r' \rtimes \Gamma_s$ and a simply-connected solvable Lie
    group $R'$ so that $\Gamma'$ is a lattice in $R' \rtimes S$ with
    the property that $\Gamma_r'$ is Zariski-dense in $R'$ and $R'$ is
    unipotently connected.
  \end{lemma}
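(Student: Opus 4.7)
The plan is to produce $R'$ as an $\bfs$-equivariant real Lie subgroup of the real algebraic hull of $R$, with the torus factor restricted to the connected $\R$-split part so that $R'$ is unipotently connected. The point is that $R$ itself may fail to be unipotently connected (as in Example \ref{disconnectedexample}), so one must replace $R$ by $R'$ together with a re-embedding of a finite-index subgroup of $\Gamma_r$.

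After passing to a finite index subgroup of $\Gamma$, we may assume $\Gamma_r$ is strongly polycyclic and $\Gamma_s$-invariant (the latter by intersecting with finitely many $\Gamma_s$-conjugates, using that $\Gamma_s$ acts on the finite quotient of $\Gamma_r$ through a finite image). Let $\bfh_\R$ denote the real algebraic hull of $R$ in the sense of Definition \ref{realalgebraichull}; then $\Gamma_r$ is Zariski-dense in $\bfh_\R$. The $S$-action on $R$ by Lie group automorphisms extends, by the real analog of Lemma \ref{hullextension}, to an action of $S$ on $\bfh_\R$ by $\R$-defined algebraic automorphisms, which by Proposition \ref{repextension} promotes to an $\R$-defined algebraic action $\bfs \to \Aut(\bfh_\R)$. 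Fix an $\R$-defined maximal torus $\bft \leq \bfh_\R$; since $\bfs$ is connected and maximal tori are all conjugate within the Fitting subgroup, we may take $\bft$ to be $\bfs$-stable, whereupon $\bfs$ centralizes $\bft$ by Lemma \ref{toririgidity}.

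Decompose $\bft(\R) = T_c \times T_d$ into compact and $\R$-split pieces, both $\bfs$-fixed. Set $R' = \U_{\bfh_\R}(\R) \rtimes T_d^0$, where $T_d^0$ is the identity component of the $\R$-split factor. Then $R'$ is a simply-connected solvable Lie group, unipotently connected (its semisimple quotient being a connected $\R$-split torus), and $\bfs$-invariant, so $R' \rtimes S$ is well-defined. The lattice hull machinery of \cite{bauesklopsch} produces a finite-index subgroup $\Gamma_r' \leq \Gamma_r$ together with an embedding $\Gamma_r' \hookrightarrow R' \subseteq \bfh_\R(\R)$, differing from the original embedding only in the $T_c$-direction, so that $\Gamma_r'$ is a Zariski-dense lattice in $R'$. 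One checks this embedding is $\Gamma_s$-equivariant from naturality of the hull construction together with the $\bfs$-equivariance of the torus decomposition, so that $\Gamma' = \Gamma_r' \rtimes \Gamma_s$ is a discrete subgroup of $R' \rtimes S$. Finite covolume follows by comparison with $\Gamma \leq G$, since the two embeddings of $\Gamma_r'$ differ only in the direction of $T_c$, which contributes a factor of finite volume.

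The main obstacle is arranging the replacement $\Gamma_r \mapsto \Gamma_r' \subseteq R'$ to be $\bfs$-equivariant. Since a finitely generated subgroup of $T_c$ may be dense in $T_c$, the $T_c$-projection of $\Gamma_r$ cannot be killed by any finite-index passage; instead one re-embeds $\Gamma_r'$ into $\U_{\bfh_\R}(\R) \rtimes T_d^0$ via the lattice hull construction, and this re-embedding must respect the $\bfs$-action. Rigidity of tori (Lemma \ref{toririgidity}) together with the connectedness of $\bfs$ guarantees that $\bft$ may be chosen $\bfs$-invariantly and that $T_c, T_d$ are canonically $\bfs$-fixed, making the re-embedding automatically equivariant.
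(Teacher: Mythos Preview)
Your construction of $R'$ has a dimension problem. By the defining property \ref{realdimension} of the real algebraic hull, $\dim \U_{\bfh_\R} = \dim R$, so your $R' = \U_{\bfh_\R}(\R) \rtimes T_d^0$ has dimension $\dim R + \dim T_d$. Whenever $T_d$ is nontrivial your $R'$ is strictly larger than $R$, and a group of Hirsch rank $\dim R$ cannot be a lattice in it. Concretely, take the Sol example $R = \R \times (\R^2 \rtimes_\psi \R)$ from Example~\ref{disconnectedexample} in the paper (actually the example preceding Definition~\ref{hullfitting}): the torus in $\bfh_\R$ is one-dimensional and $\R$-split, so your $R'$ is five-dimensional while $\Gamma_r$ has rank four. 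Relatedly, your assertion that $\Gamma_r$ is Zariski-dense in $\bfh_\R$ is false in general (Example~\ref{disconnectedexample} gives a counterexample: the torus in $\bfh_R$ is not hit by the Zariski closure of $\Gamma_r$).

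The paper's argument is organized differently. It works with the \emph{virtual algebraic hull} $\bfh_\Gamma$ of $\Gamma_r$ (a $\Q$-group), not with $\bfh_R$, and invokes \cite[5.3]{bauesklopsch} to produce a simply-connected, unipotently connected $R'$ of the correct dimension $\dim R$ inside $\bfh_\Gamma(\R)$ containing a finite-index characteristic $\Gamma_r'$ as a Zariski-dense lattice. The group $R'$ is not canonically $\U \rtimes (\text{split torus})$; it is a specific $\dim R$-dimensional subgroup. The $S$-invariance of $R'$ is then obtained not via a torus decomposition but via the Fitting subgroup: since $S$ is connected it acts trivially on $R/N$ (with $N$ the nilradical), hence trivially on $\bfh_R/\F$, and because $R'$ is unipotently connected one has $\F(\R) \cap \bfh_\Gamma = N'$ (the nilradical of $R'$), giving $\Phi(R') \subseteq R' \cdot \F(\R) = R'$ for each $\Phi$ induced by $s\in S$. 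Your torus-splitting idea does not substitute for this, because the correct $R'$ is not the full $\U \rtimes T_d^0$ and its $S$-invariance is not a formal consequence of $\bfs$ centralizing $\bft$.
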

  \begin{proof}
    Let $\bfh_R$ be the real algebraic hull of $R$ and $\bfh_\Gamma$
    the virtual algebraic hull of $\Gamma_r$. There is a finite index
    characteristic subgroup $\Gamma_r' \leq \Gamma_r$ so that
    $\bfh_\Gamma$ is the algebraic hull of $\Gamma_r'$. By
    \cite[5.3]{bauesklopsch} we may moreover assume that there is some
    simply-connected solvable Lie group $R'$ that is unipotently
    connected and so that $\Gamma_r'$ is Zariski-dense in $R'$. The
    algebraic group $\bfh_\Gamma$ is a real algebraic hull for $R'$ by
    \cite[3.11]{bauesklopsch}. In particular, we identify $R'$ with a
    subgroup $R' \leq \bfh_\Gamma(\R)$ containing $\Gamma_r'$.

    By \cite[3.9]{bauesklopsch}, the inclusion $\Gamma_r' \leq R$
    extends to an $\R$-defined embedding $\bfh_\Gamma \to \bfh_R$. The
    action of $S$ on $R$ extends to an action of $S$ on $\bfh_R$ by
    $\R$-defined algebraic automorphisms.  Let $\Phi$ be an
    $\R$-defined automorphism of $\bfh_R$ induced by some $s\in
    S$. We would like to show that $\Phi$ preserves $R'$.

    Let $N$ be the maximal connected nilpotent normal subgroup of $R$,
    and let $\F$ denote the Zariski-closure of $\Fitt(\Gamma)$ in
    $\bfh_R$. We have $N \leq \F$ by a classical result of Mostow. It
    follows from \cite[3.3]{bauesklopsch} that $N \leq \bfh_R(\R)$ is
    normal.  Because $S$ is connected, the action of $S$ on $R / N$ is
    trivial by \cite[6.9]{bauesklopsch}. It follows that $\Phi(\F) =
    \F$.  By density of $R \leq \bfh_R$, we conclude that $\Phi$ is
    trivial on the quotient $\bfh_R / \F$.
    
    Let $N'$ be the maximal normal nilpotent subgroup of $R'$. Then
    $\F(\R) = N'$ in $\bfh_\Gamma$ because $R'$ is unipotently
    connected. It follows that $\Phi(R') \subseteq R' \F(\R) = R'$,
    and so $\Phi$ induces an automorphism of $R'$. This agrees with
    the given action of $\Gamma_s$ on $\Gamma_r'$, so we may form the
    semidirect product $G' = R' \rtimes S$ containing the lattice
    $\Gamma' = \Gamma_r' \rtimes \Gamma_s$.
  \end{proof}

  We may therefore assume that the radical $R$ of $G$ is unipotently
  connected and $\Gamma_r$ is Zariski-dense in $R$. Let $\bfh$ be the
  virtual algebraic hull of $\Gamma_r$. Because $R$ is unipotently
  connected and $\Gamma_r$ is Zariski-dense in $R$,
  \cite[3.11]{bauesklopsch} implies that $\bfh$ has the structure of a
  $\R$-defined connected algebraic hull of $R$. There is a
  representation $\rho : S \to \Aut_\R(\bfh)$ by the automorphism
  extension property of the algebraic hull. Because $\bfs$ is
  simply-connected, $\rho$ extends to an $\R$-defined representation
  $\rho : \bfs \to \Aut(\bfh)$ by Proposition
  \ref{repextension}. Since $\Gamma_s$ preserves $\Gamma_r$, we have
  that $\rho(\gamma)$ is $\Q$-defined for every $\gamma \in
  \Gamma_s$. Because $\bfs$ is without $\Q$-compact factors and
  connected, we know $\Gamma_s$ is Zariski-dense in $\bfs$ by Theorem
  \ref{boreldensity}. It follows from a standard fact, e.g.\
  \cite[I.0.11]{margulis}, that the representation $\rho : \bfs
  \to \Aut(\bfh)$ is $\Q$-defined.

  The definition of the variety structure on $\Aut(\bfh)$ implies that
  the action map $\alpha : \bfh \times \Aut(\bfh) \to \bfh$ is a
  $\Q$-defined map of varieties. It follows that the action map $\bfh
  \times \bfs \to \bfh$ is $\Q$-defined. The semidirect product of
  groups
  \begin{equation} \bfg = \bfh \rtimes \bfs \end{equation} therefore
  has the structure of a $\Q$-defined algebraic group. It is evident
  from the construction that $\Gamma$ embeds in $\bfg(\Q)$ as a
  Zariski-dense subgroup. This concludes the first step of the proof.

  \bold{Step 2:\ (Extension of commensurations).} We now construct a map
  \[
  \xi : \Comm(\Gamma) \to \Aut_\Q(\bfg).
  \]
  Let $\Lambda$ be a thickening of $\Gamma_r$ in $\bfh$ with nilpotent
  supplement $C$ and good unipotent shadow $\theta$, as in Proposition
  \ref{unipotentshadow}. The action of $\Gamma_s$ on $\Gamma_r$
  extends to an action on $\Lambda$. Then $\Lambda \rtimes \Gamma_s$
  is a Zariski-dense subgroup of $\bfg(\Q)$ containing $\Gamma$ as a
  finite index subgroup.

  \begin{lemma} \label{unipotentconj} Let $\U$ denote the unipotent
    radical of $\bfh$. Suppose $u \in \U(\Q)$. Then conjugation by $u$
    induces a commensuration of $\Gamma$.
  \end{lemma}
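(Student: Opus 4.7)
\medskip

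The plan is to reduce to showing that conjugation by $u$ defines a partial automorphism of the finite-index subgroup $\Lambda \rtimes \Gamma_s$ of $\Gamma$, and then to handle the $\Lambda$-part and the $\Gamma_s$-part separately using Lemma \ref{shadowaction}. Throughout, we use the key structural fact that $\bfs$ acts trivially on the torus $\bfh / \F$ (since $\bfs$ is connected and $\Aut(\bfh/\F)$ is discrete by Lemma \ref{toririgidity}), and hence $[\bfh, \bfs] \leq \F$. After passing to a finite index subgroup and possibly thickening $\Lambda$, we may assume that $\Gamma_s$ preserves $\Fitt(\Lambda)$ and $\theta$; this is possible because $\Gamma_s$ normalizes $\Fitt(\Gamma_r)$, which is a lattice in $\F$, and the nilpotent supplement $C$ and good unipotent shadow $\theta$ can be chosen compatibly with the $\Gamma_s$-action.

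For the $\Lambda$-part, I would adapt the argument of Claim 1 in the proof of Theorem \ref{commhfdescription} from $f \in \F(\Q)$ to the more general $u \in \U(\Q)$. By Theorem \ref{nilpotentcomm}, conjugation by $u$ induces a commensuration of the nilpotent group $\theta$, giving finite-index subgroups $\theta_1, \theta_2 \leq \theta$ with $u\theta_1 u^{-1} = \theta_2$. For any $c \in \bfh$, we have $[u,c] \in [\bfh,\bfh] \leq \F$, so the essential commutator estimate still lies in $\F$. Applying Lemma \ref{shadowaction} with $\theta' = \theta_1 \cap \theta_2$ and $P$ a finite index subgroup of $C$ yields $[u,c] \in \theta_1 \cap \theta_2 \cap \F \leq \Fitt(\Lambda)$ for $c$ in a further finite index subgroup $C'' \leq C$. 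The remainder of Claim 1 then goes through verbatim to show $u\Lambda_1 u^{-1} \leq \Lambda$ for a finite index $\Lambda_1 \leq \Lambda$.

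For the $\Gamma_s$-part, a direct computation in $\bfg = \bfh \rtimes \bfs$ gives
\[
u s u^{-1} \;=\; \bigl( u \cdot \rho(s)(u^{-1}) \bigr) \cdot s
\]
for $s \in \Gamma_s$. The element $u\cdot \rho(s)(u^{-1}) = u(s\cdot u^{-1})$ lies in $\F(\Q)$ by the triviality of the $\bfs$-action on $\bfh/\F$. Applying Lemma \ref{shadowaction} with $\U$ replaced by $\F$, $\theta'$ a finite index subgroup of $\Fitt(\Lambda)$ preserved by $\Gamma_s$, $f = u$, and $P$ a finite index subgroup of $\Gamma_s$ acting on $\F$ via $\rho$, we obtain a finite index $\Gamma_s'' \leq \Gamma_s$ so that $u(s\cdot u^{-1}) \in \Fitt(\Lambda)$ for all $s \in \Gamma_s''$. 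Consequently $u s u^{-1} \in \Lambda \rtimes \Gamma_s''$ for all $s \in \Gamma_s''$.

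Combining the two steps, the subgroup $\Lambda_1 \rtimes \Gamma_s''$ has finite index in $\Lambda \rtimes \Gamma_s$, and conjugation by $u$ sends it into $\Lambda \rtimes \Gamma_s$. Applying the same argument to $u^{-1}$ furnishes the inverse inclusion on a (further) finite index subgroup, so $\operatorname{Inn}_u$ restricts to a partial automorphism of $\Lambda \rtimes \Gamma_s$, hence a commensuration of $\Gamma$. The main obstacle is arranging $\Gamma_s$-invariance of the thickening data $(\Lambda, C, \theta)$ so that Lemma \ref{shadowaction} can be applied with $P = \Gamma_s$; this requires a careful choice based on the fact that $\Gamma_s$ preserves $\Fitt(\Gamma_r)$ and that the $\bfs$-action descends trivially to $\bfh / \F$.
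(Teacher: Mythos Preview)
Your overall strategy matches the paper's, but there is a genuine gap in the $\Gamma_s$-part. You invoke Lemma \ref{shadowaction} ``with $\U$ replaced by $\F$, $\theta'$ a finite index subgroup of $\Fitt(\Lambda)$ preserved by $\Gamma_s$, $f = u$''. The hypothesis of Lemma \ref{shadowaction} requires $f$ to lie in the $\Q$-points of the unipotent group on which $P$ acts. Here you have taken that group to be $\F$, but $u$ is only assumed to lie in $\U(\Q)$, not in $\F(\Q)$. In general $\U \neq \F$, so the lemma cannot be applied as stated, and the conclusion $u(s\cdot u^{-1}) \in \Fitt(\Lambda)$ does not follow.

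The paper repairs this exactly by applying Lemma \ref{shadowaction} in the full unipotent radical $\U$ rather than in $\F$. One first argues that $\Gamma_s$ preserves the good unipotent shadow $\theta \leq \U(\Q)$ (using that $scs^{-1} \in \Fitt(\Lambda)\cdot c$ for $c\in C$), and then chooses $\Gamma_s' \leq \Gamma_s$ normalizing $\theta_1 \cap \theta_2$. Applying Lemma \ref{shadowaction} with ambient group $\U$, $\theta' = \theta_1\cap\theta_2$, $P = \Gamma_s'$, and $f = u \in \U(\Q)$ yields $u(s\cdot u^{-1}) \in \theta_1\cap\theta_2$ for $s$ in a finite index $\Gamma_s''$. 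Combining this with the separate observation $u(s\cdot u^{-1}) \in \F$ (from triviality of the $\bfs$-action on $\bfh/\F$) and the defining property $\theta \cap \F = \Fitt(\Lambda)$ of a good unipotent shadow then gives $u(s\cdot u^{-1}) \in \Fitt(\Lambda_1)\cap\Fitt(\Lambda_2)$. In short: work in $\U$ with $\theta$, then intersect with $\F$, rather than trying to work in $\F$ directly.

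A minor point: for the $\Lambda$-part you re-derive Claim 1 of Theorem \ref{commhfdescription} for $u\in \U(\Q)$. This is fine, but you can shortcut it as the paper does: since $[\bfh,\bfh]\leq \F$, any inner automorphism of $\bfh$ lies in $\ahf$, so $\Inn_u \in (\ahf)_\Q$ and Theorem \ref{commhfdescription} directly gives the partial automorphism of $\Lambda$.
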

  \begin{proof}
    Suppose $u\in \U(\Q)$. Let $\F = \Fitt(\bfh)$. Conjugation by $u$
    induces two partial automorphisms: a partial automorphism
    $\phi_\theta : \theta_1 \to \theta_2$ of $\theta$, and a partial
    automorphism $\phi_R: \Lambda_1 \to \Lambda_2$ of $\Gamma_r$ by
    Theorem \ref{commhfdescription}. As in the proof of Theorem
    \ref{commhfdescription}, we may choose $\theta_1$, $\theta_2$,
    $\Lambda_1$, and $\Lambda_2$ so that $\theta_i \cap \F =
    \Fitt(\Lambda_i)$ for $i=1,2$. We want to find some finite index
    subgroup $\Gamma_s'' \leq \Gamma_s$ so that conjugation by $u$
    induces an isomorphism $\Lambda_1 \Gamma_s'' \to \Lambda_2
    \Gamma_s''$.

    Let $N$ be the maximal connected, normal, nilpotent subgroup of
    $R$. Because $S$ is connected, the action of $S$ on $R$ is trivial
    on $R / N$ (see \cite[6.9]{bauesklopsch}). Since we have assumed
    that $R$ is unipotently connected, $N$ is Zariski-dense in the
    Fitting subgroup $\F \leq \bfh$ by \cite[5.4]{bauesklopsch}, and
    so the induced action of $\Gamma_s$ on $\bfh$ is trivial on the
    quotient $\bfh / \F$. Therefore for any $s\in \Gamma_s$ we have
    \begin{equation} \label{innsemi} s u s^{-1} u^{-1} \in \F.
    \end{equation}
    Restricting our attention to $\Lambda$, we see that for any $s\in
    \Gamma_s$ and $c\in C$, there is some $f\in \Fitt(\Lambda)$ so
    that $s c s^{-1} = f c$. It follows that conjugation by $s\in
    \Gamma_s$ preserves $\theta$. Let $\Gamma_s' \leq \Gamma_s$ be a
    finite index subgroup normalizing both $\Lambda_1$ and
    $\Lambda_2$. Then $\Gamma_s'$ also normalizes both $\theta_1$ and
    $\theta_2$. By Lemma \ref{shadowaction}, there is a finite index
    subgroup $\Gamma_s'' \leq \Gamma_s'$ so that $u s u^{-1} s^{-1}
    \in \theta_1 \cap \theta_2$ for all $s\in \Gamma_s''$. Combining
    this with (\ref{innsemi}), for all $s\in \Gamma_s''$ we have
    \begin{equation} \label{innsemi2} 
      u s u^{-1} s^{-1} \in \Fitt(\Lambda_1) \cap \Fitt(\Lambda_2).
    \end{equation}
    The same arguments as in Claim 1 of the proof of Theorem
    \ref{commhfdescription} show that conjugation by $u$ induces a
    partial isomorphism $\Lambda_1 \Gamma_s'' \to \Lambda_2
    \Gamma_s''$ of $\Lambda \rtimes \Gamma_s$.
  \end{proof}
  
  \begin{proposition} \label{semipreserve} Every commensuration $[\phi]
    \in \Comm(\Gamma)$ induces a unique $\Q$-defined automorphism of
    $\bfg$ virtually extending $\phi$. Hence there is an injective
    homomorphism
    \[ \xi : \Comm(\Gamma) \to \Aut_\Q(\bfg).\]
  \end{proposition}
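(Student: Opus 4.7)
The plan is to construct the extension by restricting $\phi$ to the two natural pieces of $\bfg = \bfh \rtimes \bfs$ and then gluing the results. Given $\phi: \Gamma_1 \to \Gamma_2$ representing $[\phi]$, I would first observe that $\Gamma_r = \Gamma \cap R$ is, up to finite index, the maximal normal virtually polycyclic subgroup of $\Gamma$: any such subgroup projects to a normal virtually polycyclic subgroup of $\Gamma_s$, but $\Gamma_s$ is arithmetic in a semisimple Lie group without $\Q$-compact factors and hence contains no infinite virtually polycyclic normal subgroup. Thus $\Gamma_r$ is commensuristic, and after shrinking $\Gamma_1$ and $\Gamma_2$ the map $\phi$ restricts to a partial automorphism $\phi_r$ of $\Gamma_r$ and descends to a commensuration $\bar\phi$ of $\Gamma/\Gamma_r$.

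Next I would invoke the two extension theorems already proved. Theorem \ref{commhfdescription}, applied to the virtual algebraic hull $\bfh$ of $\Gamma_r$, promotes $\phi_r$ to a $\Q$-defined automorphism $\Phi_r \in \Aut_\Q(\bfh)$. Since $\Gamma/\Gamma_r$ is abstractly commensurable with $\Gamma_s$, and since the hypothesis on $\Gamma$ ensures that no $\Q$-simple factor of $\bfs$ has a factor locally isomorphic to $\PSL_2(\R)$ into which $\Gamma_s$ projects discretely, Theorem \ref{semisimplerigidity} applies and produces a $\Q$-defined automorphism $\Phi_s \in \Aut_\Q(\bfs)$ virtually extending $\bar\phi$. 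Uniqueness of each of $\Phi_r$ and $\Phi_s$ is built into these theorems and ultimately comes from Zariski density.

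The final and hardest step is to assemble $\Phi_r$ and $\Phi_s$ into a single $\Q$-defined automorphism $\Phi$ of $\bfg$. Writing $\phi(s) = u(s) \cdot \Phi_s(s)$ for $s$ ranging over a finite-index subgroup of $\Gamma_s$, with $u(s) \in \bfh(\Q)$, produces a cocycle $u$ that records the failure of $\phi$ to preserve the semidirect splitting. The key technical input is Lemma \ref{unipotentconj}: it shows that conjugation by any $v \in \U(\Q)$ is itself realized as a commensuration of $\Gamma$. By an argument paralleling Claim 1 of the proof of Theorem \ref{commhfdescription}, using the good unipotent shadow $\theta$ together with the control provided by Lemma \ref{shadowaction}, one can choose $v \in \U(\Q)$ so that $\Inn_v \circ \phi$ maps a finite-index subgroup of $\Gamma_s$ into $\bfs(\Q)$. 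After this normalization, the formula $\Phi(h \cdot s) = \Phi_r(h) \cdot \Phi_s(s)$ defines a $\Q$-defined group homomorphism $\bfg \to \bfg$, because the intertwining identity $\Phi_r(s h s^{-1}) = \Phi_s(s)\,\Phi_r(h)\,\Phi_s(s)^{-1}$ is $\Q$-algebraic in the pair $(s,h)$ and holds on the Zariski-dense subset $\Gamma_s \times \Gamma_r$ of $\bfs \times \bfh$ since $\phi$ is a group homomorphism there. Uniqueness of $\Phi$ and injectivity of the resulting map $\xi$ then follow immediately from Zariski density of $\Gamma$ in $\bfg$. The main obstacle is precisely the cocycle $u$; without Lemma \ref{unipotentconj} available to absorb it into an inner automorphism realized by a commensuration, it would be unclear how to reconcile the two independently constructed extensions $\Phi_r$ and $\Phi_s$ into an honest automorphism of the semidirect product $\bfg$.
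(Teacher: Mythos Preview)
Your overall architecture matches the paper's: split $\phi$ into a solvable piece extended via the hull and a semisimple piece extended via rigidity, then repair the failure of $\phi$ to respect the splitting by a unipotent conjugation. But the crucial step---producing the single element $v\in\U(\Q)$ that trivializes the cocycle $u(s)$---is not justified, and the tools you invoke do not supply it.

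Lemma \ref{shadowaction} and Claim~1 of Theorem \ref{commhfdescription} go in the wrong direction: they start from a \emph{given} element $f\in\U(\Q)$ and show that conjugation by $f$ restricts to a commensuration. What you need is the converse: given the cocycle $s\mapsto u(s)$, find one $v$ with $u(s)=v\,(s\cdot v^{-1})$ for all $s$ in a finite-index subgroup. This is a nonabelian $H^1$-vanishing statement for $\Gamma_s$ acting on $\bfh$, and it is exactly here that the hypothesis ``no surjection to a lattice in $\SO(1,n)$ or $\SU(1,n)$'' is needed a second time (beyond its use in semisimple rigidity). Without it the cocycle can be genuinely nontrivial, as the remark at the end of \S\ref{gensection} illustrates. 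The paper handles this step differently and more directly: it takes the Zariski closure $\bfl$ of $\phi(\Gamma_1\cap\Gamma_s)$ in $\bfg$, invokes \cite[Theorem 2]{starkovvanishing} (which uses the superrigidity hypothesis) to conclude that $\bfl$ is semisimple, and then applies Mostow's Levi conjugacy (Theorem \ref{levimostow}) to obtain $u\in\U(\Q)$ with $u\bfl u^{-1}\subseteq\bfs$. Only after this conjugation does one get an honest partial automorphism of $\Gamma_s$ to which Theorem \ref{semisimplecomm} applies. Your argument defines $\Phi_s$ first from the quotient $\bar\phi$ and then tries to absorb the discrepancy; the paper finds the conjugating element first via structure theory and then defines $\Phi_S$. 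The missing ingredient in your version is precisely Starkov's vanishing theorem (or an equivalent cohomological input).
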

  \begin{proof}
    Suppose there are finite index subgroups $\Gamma_1$ and $\Gamma_2$
    of $\Lambda \rtimes \Gamma_s$ with $\phi : \Gamma_1 \to \Gamma_2$
    a partial automorphism representing $[\phi]$. Passing to a finite
    index subgroup so that $\Gamma_s \cap Z(S)$ is trivial, we may
    assume that $\Gamma_i \cap \bfh$ is the unique maximal normal
    solvable subgroup of $\Gamma_i$ for $i=1,2$ (cf. \cite[Lemma
    6]{prasad}). It follows that $\phi(\Gamma_1 \cap \bfh(\R) ) =
    \Gamma_2 \cap \bfh(\R)$, and so $\phi$ induces a commensuration
    $[\phi_R] \in \Comm(\Lambda)$ by Lemma \ref{inducedmaps}. It
    follows from Lemma \ref{solvcommembedding} that $\phi_R$
    extends to an automorphism $\Phi_R \in \Aut_\Q(\bfh)$.

    Now let $\bfl$ be the Zariski-closure of $\phi(\Gamma_1 \cap
    \Gamma_s)$ in $\bfg$. Then $\bfl$ is $\Q$-defined, and is
    semisimple by \cite[Theorem 2]{starkovvanishing}.  (Note that here
    we are using the assumption that $\Gamma_s$ does not surject to a
    lattice in any $\SU(1,n)$ or $\SO(1,n)$.) There is some $u\in
    \U(\Q)$ conjugating $\bfl$ into $\bfs$ by Theorem
    \ref{levimostow}. It follows from Lemma \ref{unipotentconj} that
    $\Inn_u \circ \phi$ virtually restricts to a partial automorphism
    $\phi_S : \Delta_1 \to \Delta_2$ of $\Gamma_s$. The partial
    automorphism $\phi_S$ virtually extends to a $\Q$-defined
    automorphism $\Phi_S \in \Aut_\Q(\bfs)$ by Theorem
    \ref{semisimplecomm}.

    Define an automorphism $\Phi \in \Aut(\bfg)$ by
    \[
    \Phi(r,s) = \Inn_{u^{-1}} \left( \Inn_u \circ \Phi_R(r), \Phi_S(s)
    \right).
    \]
    Then $\Phi$ virtually extends the partial automorphism
    $\phi$. This extension is unique up to choice of $u\in \U(\Q)$
    conjugating $\mathbf{L}$ to $\bfs$. However, any two such $u$
    differ by an element of $\U(\Q)$ centralized by $\bfs$, hence
    $\Phi$ is unique.
  \end{proof}

  \bold{Step 3:\ (Algebraic structure).} We now show that the image of
  $\xi : \Comm(\Gamma) \to \Aut_\Q(\bfg)$ has the structure of the
  $\Q$-rational points of a $\Q$-defined algebraic group. We first
  show that $\Aut(\bfg)$ in fact has the structure of a $\Q$-defined
  algebraic group.
  \begin{definition}
    A pair of automorphisms $(\Phi_R, \Phi_S) \in \Aut(\bfh) \times
    \Aut(\bfs)$ is {\em compatible} if there is some $\Phi \in
    \Aut(\bfg)$ preserving $\bfs$ with $\restr{\Phi}{\bfh} = \Phi_R$
    and $\restr{\Phi}{\bfs} = \Phi_S$. Let $C(\bfg) \subseteq
    \Aut(\bfh) \times \Aut(\bfs)$ be the set of compatible pairs of
    automorphisms.
  \end{definition}
  As both $\Aut(\bfh)$ and $\Aut(\bfs)$ have structures of
  $\Q$-defined algebraic groups, their product $\Aut(\bfh) \times
  \Aut(\bfs)$ is a $\Q$-defined algebraic group.
  \begin{lemma}
    $C(\bfg)$ is a $\Q$-defined subgroup of $\Aut(\bfh) \times
    \Aut(\bfs)$.
  \end{lemma}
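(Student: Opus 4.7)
The plan is to realize $C(\bfg)$ as the vanishing locus of a single $\Q$-defined morphism of varieties, and then to note that closure under composition is automatic.

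First, I would observe that since $\bfh$ is the solvable radical of $\bfg$, every $\Phi \in \Aut(\bfg)$ preserves $\bfh$. The additional requirement that $\Phi(\bfs) = \bfs$ then forces $\Phi$ to be determined by the pair $(\Phi_R, \Phi_S) := (\restr{\Phi}{\bfh}, \restr{\Phi}{\bfs})$ via $\Phi(r \cdot s) = \Phi_R(r) \cdot \Phi_S(s)$. A direct computation in the semidirect product $\bfg = \bfh \rtimes \bfs$ (with action $\rho$) shows that this formula defines a group automorphism of $\bfg$ if and only if the intertwining relation
\[
\Phi_R\bigl(\rho(s)(r)\bigr) = \rho\bigl(\Phi_S(s)\bigr)\bigl(\Phi_R(r)\bigr)
\]
holds for all $s \in \bfs$ and $r \in \bfh$. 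Invertibility is automatic once $\Phi_R$ and $\Phi_S$ are automorphisms, so $C(\bfg)$ is exactly the set of pairs in $\Aut(\bfh) \times \Aut(\bfs)$ satisfying this intertwining relation.

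Next, to verify $\Q$-definability, I would consider the morphism of $\Q$-defined varieties
\[
F : \Aut(\bfh) \times \Aut(\bfs) \times \bfs \times \bfh \to \bfh, \quad F(\Phi_R, \Phi_S, s, r) = \Phi_R\bigl(\rho(s)(r)\bigr) \cdot \rho\bigl(\Phi_S(s)\bigr)\bigl(\Phi_R(r)\bigr)^{-1}.
\]
This is $\Q$-defined because $\rho : \bfs \to \Aut(\bfh)$ was shown to be $\Q$-defined in Step 1, and because the algebraic group structures on $\Aut(\bfh)$ and $\Aut(\bfs)$ (from \S\ref{autstructure} and from the identification with $\Aut(\mathfrak{s})$ in \S\ref{semisection}) are $\Q$-defined. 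Expanding $F$ in a $\Q$-basis for the coordinate ring of $\bfs \times \bfh$ produces finitely many coefficient functions on $\Aut(\bfh) \times \Aut(\bfs)$, each $\Q$-defined, whose common vanishing locus is precisely $C(\bfg)$. This exhibits $C(\bfg)$ as a $\Q$-defined Zariski-closed subset.

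Finally, $C(\bfg)$ is a subgroup: if $(\Phi_R, \Phi_S)$ and $(\Phi_R', \Phi_S')$ both satisfy the intertwining relation, then substituting one into the other shows that the componentwise composition $(\Phi_R' \circ \Phi_R, \Phi_S' \circ \Phi_S)$ does as well, and a symmetric computation handles inversion. There is no real obstacle here beyond careful bookkeeping in the semidirect product; the only substantive content is the $\Q$-definability of the intertwining locus, which reduces to $\Q$-definability of $\rho$.
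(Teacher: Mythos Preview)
Your argument is correct and follows essentially the same route as the paper: both characterize $C(\bfg)$ as the locus of pairs $(\Phi_R,\Phi_S)$ satisfying the intertwining relation $\Phi_R \circ \rho(s) = \rho(\Phi_S(s)) \circ \Phi_R$, and deduce $\Q$-definability from the $\Q$-definability of $\rho$ and of the algebraic structures on $\Aut(\bfh)$ and $\Aut(\bfs)$. The only cosmetic difference is that the paper writes the condition as an equation in $\Aut(\bfh)$ (parametrized by $s$ alone), whereas you write it as an equation in $\bfh$ (parametrized by $(s,r)$) and then pass to finitely many $\Q$-defined coefficient functions; your version is slightly more explicit about why the resulting infinite family of conditions cuts out a $\Q$-defined subvariety.
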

  \begin{proof}
    Let $\rho : \bfs \to \Aut(\bfh)$ be the $\Q$-defined
    representation by conjugation. Any automorphism $\Phi \in
    \Aut(\bfg)$ preserving $\bfs$ must satisfy
    \[
    [ \Phi \circ \rho(s)] ( r ) = \Phi( s r s^{-1} ) = \Phi(s)
    \Phi(r) \Phi(s)^{-1} = [ \rho( \Phi(s) ) \circ \Phi] (r)
    \] 
    for all $r\in \bfh$ and all $s\in \bfs$. From this it is clear that
    any $(\Phi_R, \Phi_S) \in C(\bfg)$ satisfies
    \begin{equation} \label{compatible} \Phi_R \circ \rho(s) \circ
      \Phi_R^{-1} \circ \rho( \Phi_S(s) )^{-1} = \Id \in \Aut(\bfh)
    \end{equation}
    for all $s\in \bfs$. Conversely, suppose a pair $(\Phi_R, \Phi_S)
    \in \Aut(\bfh) \times \Aut(\bfs)$ satisfies (\ref{compatible}) for
    all $s\in \bfs$. Then the function $\Phi : \bfg \to \bfg$ defined by
    $\Phi(r,s) = \Phi_R(r) \Phi_S(s)$ is an automorphism of $\bfg$, and
    so $(\Phi_R, \Phi_S) \in C(\bfg)$. Thus $C(\bfg)$ is equal to the
    set of pairs $(\Phi_R, \Phi_S)$ satisfying (\ref{compatible}) for
    all $s\in \bfs(\Q)$. For a fixed element $s\in \bfs$, the solution set
    of equation (\ref{compatible}) is a $\Q$-defined closed subset of
    $\Aut( \bfh ) \times \Aut( \bfs )$. It follows that $C(\bfg)$ is a
    $\Q$-defined subgroup.
  \end{proof}

  \begin{lemma} \label{autbfgstructure}
    The map
    \begin{equation} \label{autgmap}
      \begin{split} \Theta : \U \rtimes C(\bfg)
        &\to \Aut(\bfg) \\ (u, \Phi_R, \Phi_S) &\mapsto \Inn_u \circ
        \Phi_R \circ \Phi_S 
      \end{split} 
    \end{equation}    
    is a surjective group homomorphism with $\Q$-defined unipotent
    kernel. Hence $\Aut(\bfg)$ has the structure of a $\Q$-defined
    algebraic group, such that
    \begin{equation} \label{autgqpoints} \Aut_\Q(\bfg) \cong
      \Aut(\bfg)(\Q) \cong \U(\Q) \rtimes C(\bfg)(\Q) / (\ker
      \Theta)(\Q).\end{equation}
  \end{lemma}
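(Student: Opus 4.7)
The plan is to follow the strategy used for $\Aut(\bfh)$ in Section \ref{autstructure}, adapted to the Levi decomposition $\bfg = \bfh \rtimes \bfs$. First I would verify that $\Theta$ is a well-defined group homomorphism: the semidirect product structure on $\U \rtimes C(\bfg)$ uses the action of $C(\bfg)$ on $\U$ through the first factor (since $\U \leq \bfh$ is preserved by any $\Phi_R \in \Aut(\bfh)$), and a direct computation using $\Phi \circ \Inn_u = \Inn_{\Phi(u)} \circ \Phi$ for $u \in \U$ shows $\Theta((u_1, \Phi_R^1, \Phi_S^1)(u_2, \Phi_R^2, \Phi_S^2)) = \Theta(u_1, \Phi_R^1, \Phi_S^1) \circ \Theta(u_2, \Phi_R^2, \Phi_S^2)$. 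Moreover the formula for $\Theta$ is polynomial in the coordinates of the ambient representations, so $\Theta$ is a $\Q$-defined morphism of varieties.

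Next I would establish surjectivity. Given $\Phi \in \Aut(\bfg)$, the solvable radical $\bfh = \Rad(\bfg)$ and unipotent radical $\U = \U_\bfg$ are characteristic, so $\Phi$ preserves both. The image $\Phi(\bfs)$ is a Levi subgroup of $\bfg$, so by Mostow's theorem (\ref{levimostow}) there is some $u \in \U$ with $u^{-1}\Phi(\bfs) u = \bfs$. Set $\Phi' = \Inn_{u^{-1}} \circ \Phi$, $\Phi_R = \Phi'|_\bfh$, $\Phi_S = \Phi'|_\bfs$. Then $(\Phi_R, \Phi_S) \in C(\bfg)$ witnesses compatibility through $\Phi'$, and $\Theta(u, \Phi_R, \Phi_S) = \Phi$. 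For the stronger statement $\Aut_\Q(\bfg) = \Aut(\bfg)(\Q)$, when $\Phi$ is $\Q$-defined, the Levi subgroup $\Phi(\bfs)$ is $\Q$-defined, so the conjugating element $u$ from Theorem \ref{levimostow} can be chosen in $\U(\Q)$, and then $\Phi_R, \Phi_S$ are $\Q$-defined.

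To identify the kernel, suppose $\Theta(u, \Phi_R, \Phi_S) = \Id$. Restricting to $\bfh$ gives $\Phi_R = \Inn_{u^{-1}}|_\bfh$. Restricting to $\bfs$ gives $\Phi_S(s) = u^{-1}su$ for all $s \in \bfs$, and since $\Phi_S$ maps $\bfs$ into $\bfs$, $u$ must normalize $\bfs$. For $u \in \U$ and $s \in \bfs$, one computes $u^{-1}su \cdot s^{-1} = u^{-1}(sus^{-1}) \in \U$ (using that $\U$ is normal in $\bfg$), while also lying in $\bfs$ by the normalization hypothesis, hence equal to $1$. So $u \in Z_\U(\bfs)$ and $\Phi_S = \Id$. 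The kernel is therefore isomorphic to the $\Q$-defined closed subgroup $Z_\U(\bfs) \leq \U$, which is unipotent. Then $\Aut(\bfg) = (\U \rtimes C(\bfg))/\ker \Theta$ acquires the structure of a $\Q$-defined algebraic group by Lemma \ref{Qquotient}. The formula (\ref{autgqpoints}) for $\Q$-points follows because $H^1(\overline{\Q}/\Q, -)$ vanishes on the unipotent kernel, exactly as in \cite[2.2.3]{platonovrapinchuk} applied in the analogous argument for $\Aut(\bfh)$ in Section \ref{autstructure}.

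The main subtlety is the kernel calculation: one must use that $\U$ is normal in $\bfg$ (so $\bfs$-conjugates of $u \in \U$ remain in $\U$) to conclude that a $u \in \U$ which merely normalizes $\bfs$ already centralizes it. Once this is in hand, everything else is a direct adaptation of (\ref{authmap})--(\ref{qdefinedautos}), with $C(\bfg)$ playing the role $\Aut(\bfh)_\bft$ played for $\bfh$ alone.
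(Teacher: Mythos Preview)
Your proposal is correct and follows essentially the same route as the paper: surjectivity via Mostow's conjugacy theorem, the explicit kernel computation $\ker\Theta \cong Z_\U(\bfs)$, and the passage to $\Q$-points via vanishing of $H^1$ on unipotent groups as in \cite[2.2.3]{platonovrapinchuk}. One minor terminological slip: $\Phi(\bfs)$ is not a Levi subgroup of $\bfg$ in the sense of Theorem~\ref{levimostow} (a complement to $\U_\bfg$ would also contain a torus), but it is a reductive subgroup, so Mostow's theorem still conjugates it into a fixed Levi $\bfl$, and since $\bfs = [\bfl,\bfl]$ the conjugate must equal $\bfs$; this is the step both you and the paper leave implicit.
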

  \begin{proof}
    This follows from standard arguments. Compare to
    \S\ref{autstructure} and \cite[\S3.1]{bauesgrunewald}, for
    example.
  \end{proof}
  
  We will now show that the image of
  \[
  \xi : \Comm(\Gamma) \to \Aut(\bfg)
  \]
  is equal to the $\Q$-points of a $\Q$-defined subgroup of
  $\Aut(\bfg)$. Let $\mathcal{A}_{\Gamma_r} \leq \Aut(\bfh)$ be the
  $\Q$-defined subgroup such that $\mathcal{A}_{\Gamma_r}(\Q) \cong
  \Comm( \Gamma_r )$, as in Theorem \ref{introsolv}. Define
  \[
  \mathcal{B} = \left\{ \Phi \in \Aut(\bfg) \suchthat
    \restr{\Phi}{\bfh} \in \mathcal{A}_{\Gamma_r} \right\}.
  \]
  Then $\mathcal{B}$ is evidently a $\Q$-defined subgroup of
  $\Aut(\bfg)$. It is clear that $\xi( \Comm(\Gamma) ) \leq
  \mathcal{B}(\Q)$.
  \begin{proposition}
    The map $\xi : \Comm(\Gamma) \to \mathcal{B}(\Q)$ is an
    isomorphism.
  \end{proposition}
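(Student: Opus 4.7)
Injectivity of $\xi$ is provided by Proposition \ref{semipreserve}, and the containment $\xi(\Comm(\Gamma)) \subseteq \mathcal{B}(\Q)$ follows from Theorem \ref{introsolv} together with the observation, implicit in the proof of Proposition \ref{semipreserve}, that any commensuration of $\Gamma$ restricts on a finite index subgroup to a commensuration of $\Gamma_r$. The plan is therefore to prove surjectivity of $\xi$ onto $\mathcal{B}(\Q)$.

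Fix $\Phi \in \mathcal{B}(\Q)$. Applying the decomposition (\ref{autgqpoints}) of Lemma \ref{autbfgstructure}, I would write $\Phi = \Inn_u \circ \Psi$ where $u \in \U(\Q)$ and $\Psi \in \Aut_\Q(\bfg)$ preserves $\bfs$; such $\Psi$ corresponds to a compatible pair $(\Psi_R, \Psi_S) \in C(\bfg)(\Q)$ with $\Psi_R = \restr{\Psi}{\bfh}$, $\Psi_S = \restr{\Psi}{\bfs}$, and $\Psi(r, s) = \Psi_R(r)\Psi_S(s)$. By Lemma \ref{unipotentconj}, $\Inn_u$ already lies in $\xi(\Comm(\Gamma))$, so it suffices to exhibit $\Psi$ in $\xi(\Comm(\Gamma))$. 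The hypothesis $\Phi \in \mathcal{B}$ says $\restr{\Phi}{\bfh} = \Inn_u \circ \Psi_R$ lies in $\mathcal{A}_{\Gamma_r}(\Q)$; since $\restr{\Inn_u}{\bfh}$ also lies there (being the extension of the restriction to $\Gamma_r$ of the commensuration produced by Lemma \ref{unipotentconj}), so does $\Psi_R$. Theorem \ref{introsolv} then produces a partial automorphism $\phi_R : \Lambda_1 \to \Lambda_2$ of $\Gamma_r$ extending to $\Psi_R$, and Theorem \ref{semisimplecomm} (using $\Gamma_s \doteq \bfs(\Z)$ as arranged in Step 1) produces a partial automorphism $\phi_S : \Delta_1 \to \Delta_2$ of $\Gamma_s$ extending to $\Psi_S$.

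To glue $\phi_R$ and $\phi_S$ into a commensuration of $\Gamma = \Gamma_r \rtimes \Gamma_s$, I would replace $\Delta_1$ by a finite index subgroup $\Delta$ that normalizes both $\Lambda_1$ and $\Lambda_2$ and satisfies $\phi_S(\Delta) \subseteq \Delta_2 \cap N_{\Gamma_s}(\Lambda_2)$; such $\Delta$ exists because the quotients $\Gamma_r / \Lambda_i$ are finite. Then set
\[
\phi : \Lambda_1 \rtimes \Delta \;\longrightarrow\; \Lambda_2 \rtimes \phi_S(\Delta), \qquad \phi(r, s) = (\phi_R(r), \phi_S(s)).
\]
The compatibility identity (\ref{compatible}), evaluated on $s \in \Delta$ and $r \in \Lambda_1$, reads $\phi_R(s r s^{-1}) = \phi_S(s) \phi_R(r) \phi_S(s)^{-1}$, which is exactly the relation forcing $\phi$ to respect the semidirect product multiplication; the remaining properties (injectivity, finite-index image, and $\xi([\phi]) = \Psi$) are inherited from the corresponding properties of $\phi_R$ and $\phi_S$ together with the uniqueness clause of Proposition \ref{semipreserve}. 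The main technical point is that the defining relation for $C(\bfg)$ is precisely what allows the separately-produced solvable and semisimple commensurations to be assembled into a single commensuration of $\Gamma$; this is where the hypothesis $\Phi \in \mathcal{B}(\Q)$ (as opposed to an arbitrary element of $\Aut_\Q(\bfg)$) is essential, since without it $\Psi_R$ would not even arise from a commensuration of $\Gamma_r$ in the first place.
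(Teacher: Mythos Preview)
Your proof is correct and follows essentially the same route as the paper: reduce via $\Inn_u$ (the paper invokes Theorem \ref{levimostow} directly, you invoke Lemma \ref{autbfgstructure}, which amounts to the same thing) to an automorphism preserving $\bfs$, split it as a compatible pair $(\Psi_R,\Psi_S)$, produce commensurations of the solvable and semisimple parts separately via Theorem \ref{introsolv} and the semisimple case, and glue them using the compatibility relation. The only cosmetic differences are that the paper cites Proposition \ref{semicommqpointspart} (the inclusion $\Aut(\bfs)(\Q)\hookrightarrow\Comm(\bfs(\Z))$) rather than the stronger Theorem \ref{semisimplecomm}, and arranges the normalization by taking $\Lambda_1$ characteristic rather than shrinking $\Delta$; both are minor and your version is equally valid.
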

  \begin{proof}
    Clearly $\xi$ is injective. Suppose $\Phi \in \mathcal{B}(\Q)$. By
    Theorem \ref{levimostow} there is some $u\in \U(\Q)$ such that
    $\Inn_{u} \circ \Phi$ preserves $\bfs$. Since $\Inn_u \in
    \mathcal{A}_{\Gamma_r}$, it follows that $\Inn_{u} \circ \Phi \in
    \mathcal{B}(\Q)$. Therefore there are $\Phi_R \in
    \mathcal{A}_{\Gamma_r}(\Q)$ and $\Phi_S \in \Aut(\bfs)(\Q)$ such
    that $\Inn_u \circ \Phi = \Phi_R \circ \Phi_S$. 

    We have that $\Phi_R$ induces a partial automorphism $\phi_R :
    \Lambda_1 \to \Lambda_2$ of $\Lambda$ by Theorem \ref{introsolv},
    and $\Phi_S$ induces a partial automorphism $\phi_S : \Gamma_{s,1}
    \to \Gamma_{s,2}$ of $\Gamma_s$ by Proposition
    \ref{semicommqpointspart}. We may choose $\Lambda_1$ to be
    characteristic in $\Lambda$, and then choose $\Gamma_{s,2}$ to
    normalize $\Lambda_2 \leq \Lambda$. It follows that there is a
    well-defined isomorphism $\phi: \Lambda_1 \Gamma_{s,1} \to
    \Lambda_2 \Gamma_{s,2}$ defined by $\phi(r,s) = \Phi_R(r)
    \Phi_S(s)$, which clearly satisfies $\xi( [\phi] ) = \Phi_R \circ
    \Phi_S$. Since $\Inn_u \in \xi( \Comm(\Gamma) )$ by Lemma
    \ref{unipotentconj}, it follows that $\Phi \in \xi( \Comm(\Gamma)
    )$. 
  \end{proof}

  \bold{Step 4: ($\Aut(\Gamma)$ commensurable with
    $\mathcal{B}(\Z)$).}  It remains only to show that $\Aut(\Gamma)$
  is commensurable with $\mathcal{B}(\Z)$. For this, we first show
  that the element $u \in \U(\Q)$ arising in the proof of Proposition
  \ref{semipreserve} can be chosen in a controlled way. Given a vector
  space $V$ of finite dimension over a field of characteristic 0, we
  say that a subset $L \subseteq V$ is a {\em vector space lattice} if
  $L$ is a finitely generated $\Z$-submodule of $V(\Q)$ spanning $V$.
  \begin{lemma}\label{torusfixedpoints}
    Let $P$ be any group acting nontrivially and irreducibly on a
    vector space $V \cong \R^n$. Suppose $P$ preserves a vector space
    lattice $L' \subseteq V(\Q)$. Then there is a vector space lattice
    $L \subseteq V(\Q)$ such that if $v\in V(\Q)$ satisfies $v -
    p\cdot v \in L'$ for all $p\in P$ then $v\in L$.
  \end{lemma}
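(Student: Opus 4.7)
The plan is to analyze the set
\[
S = \left\{ v \in V \suchthat v - p\cdot v \in L' \text{ for all } p \in P \right\},
\]
which is evidently an additive subgroup of $V$ containing $L'$, and to show that $S$ is in fact a discrete subgroup of $V$. The lemma then follows by enlarging $S$ to a full vector space lattice.

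First, irreducibility together with nontriviality forces the fixed subspace $V^P$ to be zero: $V^P$ is $P$-invariant, and by nontriviality it is a proper subspace of $V$, so irreducibility gives $V^P = 0$. Equivalently, $\bigcap_{p \in P} \ker(I - p) = 0$, and finite dimensionality of $V$ then allows me to choose finitely many elements $p_1, \dotsc, p_k \in P$ with $\bigcap_{i=1}^k \ker(I - p_i) = 0$ already.

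Second, I would consider the linear map $F : V \to V^k$ defined by $F(v) = ((I-p_1)v, \dotsc, (I-p_k)v)$, which is injective by the previous step. For any $v \in S$ the definition of $S$ gives $F(v) \in (L')^k$, so $S \subseteq F^{-1}((L')^k)$. Since $(L')^k$ is a vector space lattice in $V^k$ and $F$ is a linear isomorphism from $V$ onto the subspace $F(V)$, the intersection $(L')^k \cap F(V)$ is a discrete subgroup of $F(V)$, and pulling back along the homeomorphism $F^{-1} : F(V) \to V$ shows that $S$ is discrete in $V$.

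Finally, every discrete subgroup of $\R^n$ is free abelian of rank equal to the real dimension of its linear span, and is therefore a vector space lattice in that span. Thus $S$ is a vector space lattice in some subspace $W \subseteq V$; choosing any vector space lattice $L_0$ in a complement of $W$ and setting $L = S + L_0$ yields a vector space lattice in $V$ containing $S$, as required. I do not foresee real obstacles here: the only non-formal step is trimming the possibly infinite intersection of kernels down to a finite one, which is immediate from finite-dimensionality of $V$.
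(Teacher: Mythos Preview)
Your argument is correct, and it takes a somewhat different route from the paper's. The paper passes to the quotient torus $V/L'$ and observes that the set $S$ you define is precisely the preimage of the fixed-point set of the induced $P$-action on $V/L'$; that fixed-point set is a closed (hence compact) Lie subgroup whose identity component lifts to a $P$-invariant subspace of $V$, which must vanish by irreducibility and nontriviality, so the fixed-point set is finite and $S$ is a lattice. Your proof instead stays in $V$ and is purely linear-algebraic: the same observation $V^P=0$ lets you build an injective linear map $F:V\to V^k$ with $F(S)\subseteq (L')^k$, forcing $S$ to be discrete. Your approach avoids any appeal to the structure of closed subgroups of tori, while the paper's is slightly more conceptual in identifying $S/L'$ directly as a fixed-point set.

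One small simplification: since $L'\subseteq S$ and $L'$ spans $V$, your discrete subgroup $S$ already spans all of $V$, so $W=V$ and you may take $L=S$; the extension step with $L_0$ is unnecessary.
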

  \begin{proof}
    The action of $P$ descends to an action of $P$ on the torus $V /
    L'$. It suffices to show that this action has finitely many fixed
    points, as the fixed points of $V(\Q) / L'$ lift to the desired
    vector space lattice $L \subseteq V$. To see this, simply note
    that the fixed point set $X$ of the action of $P$ is a closed,
    hence compact, Lie subgroup of $V / L'$. The dimension of $X$ must
    be zero by the assumption that $P$ acts irreducibly and
    nontrivially on $V$. Therefore $X$ is finite.
  \end{proof}
  \begin{lemma} \label{uisnice} There is a subgroup $\Lambda \leq
    \U(\Q)$ commensurable with $\U(\Z)$ such that if $\phi \in
    \Aut(\Gamma)$ virtually extends to $\Phi \in \Aut(\bfg)$ then
    there is some $u \in \Lambda$ such that
    \[ (\Inn_u \circ \Phi) (\bfs) \subseteq \bfs. \]
  \end{lemma}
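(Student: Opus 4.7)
\medskip

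The plan is to use the freedom in Theorem \ref{levimostow} and constrain it using the fact that $\phi \in \Aut(\Gamma)$ (not merely $\Comm(\Gamma)$), which forces $\Phi$ to preserve a finite-index subgroup of $\Gamma$. Theorem \ref{levimostow} produces some $u_0 \in \U(\Q)$ with $(\Inn_{u_0}\circ\Phi)(\bfs)\subseteq\bfs$, and $u_0$ is unique up to left multiplication by the $\Q$-defined subgroup $Z_{\U}(\bfs)$. It therefore suffices to show that the coset $u_0\cdot Z_{\U}(\bfs)$ always meets a fixed lattice in $\U(\Q)$ commensurable with $\U(\Z)$, independent of $\phi$.

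First I would extract the constraint. Passing to a characteristic finite-index subgroup, we may assume $\Phi(\Gamma)\subseteq \Gamma$ and, in particular, $\Phi(\sigma)\in \Gamma$ for every $\sigma \in \Gamma_s$. Using the factorization $\Phi = \Inn_{u_0^{-1}}\circ \Phi_R\circ\Phi_S$ coming from Lemma \ref{autbfgstructure}, a direct computation in $\bfg = \bfh\rtimes\bfs$ gives
\[
\Phi(\sigma) \;=\; u_0^{-1}\bigl(\Phi_S(\sigma)\cdot u_0\bigr)\cdot \Phi_S(\sigma),
\]
where $\Phi_S(\sigma)\cdot u_0 = \Phi_S(\sigma)\,u_0\,\Phi_S(\sigma)^{-1}$. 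Matching this against the decomposition $\Phi(\sigma) = r_\sigma s_\sigma \in \Gamma_r \Gamma_s$ forces $s_\sigma = \Phi_S(\sigma)$ and $u_0^{-1}\bigl(\Phi_S(\sigma)\cdot u_0\bigr)\in \Gamma_r \cap \U$, a fixed lattice $L'\subseteq \U(\Q)$ independent of $\phi$.

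Second I would linearize at the Lie algebra level. By complete reducibility of $\bfs$-modules, $\uu = \uu^\bfs\oplus V$, and $V$ decomposes further as $V = \bigoplus_i V_i$ into $\bfs$-irreducible subrepresentations on which $\bfs$ acts nontrivially. By Borel density (Theorem \ref{boreldensity}), $\Gamma_s$ is Zariski-dense in $\bfs$ and hence also acts irreducibly and nontrivially on each $V_i$. Writing $\log(u_0) = U^\bfs + U'$ with $U^\bfs \in \uu^\bfs$ and $U' \in V$, we may left-multiply $u_0$ by $\exp(-U^\bfs)\in Z_{\U}(\bfs)(\Q)$ to arrange $\log(u_0)\in V$. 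The Baker--Campbell--Hausdorff formula then translates the condition $u_0^{-1}(\Phi_S(\sigma)\cdot u_0)\in L'$ into
\[
\Ad(\Phi_S(\sigma))\,U' - U' \;\in\; L',
\]
modulo higher-order Lie-bracket corrections controlled inductively along the descending central series of $\U$ (exactly as in the proof of Lemma \ref{shadowaction}). Applying Lemma \ref{torusfixedpoints} to each irreducible $V_i$ (with $P = \Phi_S(\Gamma_s)$ and with the lattice given by the image of $L'$ in $V_i$) confines the $V_i$-component of $U'$ to a fixed vector-space lattice $L_i \subseteq V_i$.

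Consequently $\log(u_0)\in L := \bigoplus_i L_i \subseteq V$, and the subgroup $\Lambda \leq \U(\Q)$ generated by $\exp(L)$ together with a lattice in $Z_{\U}(\bfs)(\Q)$ commensurable with $Z_{\U}(\bfs)(\Z)$ is itself commensurable with $\U(\Z)$ and contains an admissible $u$ for every $\phi$ satisfying the hypothesis. The main obstacle is the inductive bookkeeping for the Baker--Campbell--Hausdorff corrections: one must show that they preserve an integral structure uniformly across filtration levels, so that Lemma \ref{torusfixedpoints} can be applied level by level on the graded pieces of the central series. This parallels the argument already carried out in Lemma \ref{shadowaction} and should go through with care but no essentially new idea.
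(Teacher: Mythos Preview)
Your proposal is correct and follows essentially the same route as the paper: derive from $\phi\in\Aut(\Gamma)$ the constraint that $u_0^{-1}(\sigma\cdot u_0)$ lies in a fixed lattice in $\U$ for all $\sigma\in\Gamma_s$, pass to the Lie algebra, and apply Lemma~\ref{torusfixedpoints} on each nontrivial irreducible $\bfs$-summand while absorbing the trivial summand into the $Z_\U(\bfs)$-ambiguity of $u_0$. One point to make explicit is that $\Phi_S(\Gamma_s)=\Gamma_s$ (because $\phi$ induces an automorphism of $\Gamma/\Gamma_r\cong\Gamma_s$), so the group $P$ you feed into Lemma~\ref{torusfixedpoints} is independent of $\phi$ and the resulting lattice really is fixed; the paper handles the BCH issue you flag simply by asserting the existence of a $\Gamma_s$-stable vector-space lattice $L'\supseteq\log(\theta)$ with $v-\gamma_s\cdot v\in L'$, which is the same inductive control you outline.
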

  \begin{proof}
    Let $\mathfrak{u}$ denote the Lie algebra of $\U$. The action of
    $\Gamma_s$ on $\U$ induces a linear action of $\Gamma_s$ on
    $\mathfrak{u}$. Let $\theta$ be a good unipotent shadow of
    $\Gamma_r$. Fix a vector space lattice $L' \subseteq
    \mathfrak{u}(\Q)$ containing $\log(\theta)$ preserved by the
    action of $\Gamma_s$ on $\mathfrak{u}$.

    Suppose $\phi \in \Aut(\Gamma)$ virtually extends to $\Phi \in
    \Aut(\bfg)$. By Theorem \ref{levimostow}, there is some $u\in
    \U(\Q)$ so that
    \begin{equation}\label{autconjugation}
      (\Inn_u \circ \Phi)(\bfs) \subseteq \bfs.
    \end{equation} 
    Define $\phi_1 : \Gamma_s \to \Gamma_r$ and $\phi_2 : \Gamma_s \to
    \Gamma_s$ by 
    \[
    \phi( 0 , \gamma_s ) = ( \phi_1(\gamma_s) , \phi_2(\gamma_s) ).
    \]
    Take any $\gamma_s \in \Gamma_s$. It follows from equation
    (\ref{autconjugation}) that $\phi_1( \gamma_s ) \in \U \cap
    \Gamma_r$, and so $\phi_1(\gamma_s) \in \theta$. From this we
    conclude that
    \[
      u ( \gamma_s \cdot u^{-1} ) \in \theta,
    \]
    and therefore
    \[
    \log(u) - \gamma_s \cdot \log(u) \in L'.
    \]

    Because $\bfs$ is semisimple, the action of $\Gamma_s$ on
    $\mathfrak{u}$ is completely reducible. Applying Lemma
    \ref{torusfixedpoints} to each irreducible component of this
    representation of $\Gamma_s$, we find a vector space lattice $L
    \subseteq \mathfrak{u}(\Q)$ with the property that any $u\in
    \U(\Q)$ satisfying equation (\ref{autconjugation}) satisfies
    $\log(u) \in L$. Let $\Lambda \leq \U(\Q)$ be any subgroup such
    that $\log(\Lambda)$ is a vector space lattice containing $L$ with
    finite index. Such a subgroup exists by the methods of
    \cite[\S6B]{segalbook}. The fact that $\Lambda$ is commensurable
    with $\U(\Z)$ is immediate from the fact that $\log(\Lambda)
    \subseteq \mathfrak{u}(\Q)$ is a vector space lattice.
  \end{proof}
  
  Now let 
  \[
  A_{\Lambda,\bfh} = \left\{ \Phi \in \ahf \suchthat \Phi( \Lambda )
    \subseteq \Lambda \right\}.
  \]
  Then $A_{\Lambda,\bfh}$ is commensurable with $\ahf(\Z)$ by
  \cite[8.1]{bauesgrunewald}, hence is commensurable with
  $\Aut(\Gamma_r)$. Define a $\Q$-defined subgroup of $C(\bfg)$ by
  \[
  C_\Gamma(\bfg) = \left\{ (\Phi_R, \Phi_S) \in C(\bfg) \suchthat
    \Phi_R \in \mathcal{A}_{\Gamma_r} \right\},
  \]
  and 
  \[
  A_\Lambda = \left \{ (\Phi_R,\Phi_S)\in C_\Gamma(\bfg) \suchthat
    \Phi_R \in A_{\Lambda,\bfh} \text{ and } \Phi_S(\Gamma_s) =
    \Gamma_s \right \}.
  \]
  Then $A_\Lambda$ is commensurable with $C_\Gamma(\bfg)(\Z)$.  Note
  that the map $\Theta$ of Lemma \ref{autbfgstructure} descends to a
  map
  \[
  \bar \Theta : \U \rtimes C_{\Gamma}(\bfg) \to \Aut(\bfg),
  \]
  and there is an isomorphism of algebraic groups
  \[
  \mathcal{B} \cong \U \rtimes C_{\Gamma}(\bfg) / \ker( \bar \Theta
  ). 
  \]
  
  Let 
  \[
  \Aut_\Lambda(\Gamma) = \left \{ \phi \in \Aut(\Gamma) \suchthat
    \restr{\phi}{\Gamma_r} \in A_{\Lambda,\bfh} \right\}.
  \]
  Note that $[\Aut(\Gamma) : \Aut_\Lambda(\Gamma)] < \infty$. By Lemma
  \ref{uisnice} there is a map
  \[
  \xi : \Aut_\Lambda(\Gamma) \to \Lambda \rtimes A_\Lambda / \ker(
  \bar \Theta ).
  \]
  This map is clearly injective, and the preceding discussion shows
  that its image is of finite index. Therefore the image of
  $\Aut(\Gamma)$ in $\mathcal{B}$ is commensurable with
  $\mathcal{B}(\Z)$. This completes the proof.
\end{proof}

\begin{remark}
  The assumption that the lattice $\Gamma$ is superrigid in $S$ cannot
  be removed from Theorem \ref{shorttheorem}. Consider for example $S
  = \SO(1,n)$ for $n\geq 2$ with a lattice $\Gamma \leq S$ such that
  $\Gamma / [\Gamma, \Gamma]$ is infinite. Let $\tau : \Gamma \to \Z$
  be any nontrivial homomorphism. Then $\phi_\tau : \Z\times \Gamma
  \to \Z\times \Gamma$ defined by
  \[
  \phi_\tau( t, \gamma) = ( t + \tau(\gamma) , \gamma )
  \]
  is an automorphism of $\Z\times \Gamma$, which is a lattice in $\R
  \times S$. However, $\phi_\tau$ neither is induced by conjugation by
  an element of $\Q \subseteq \R$ nor preserves $S$ in any sense, and
  $\phi_\tau$ cannot be extended to an automorphism of $\R \times S$.

  Automorphisms of the form $\phi_\tau$ as above are in one-to-one
  correspondence with elements of $H^1(\Gamma, \Z)$. If $\Delta \leq
  \Gamma$ is a finite index subgroup and $\sigma \in H^1(\Delta, \Z)$,
  then $\phi_\sigma$ defines a partial automorphism of $\Z \times
  \Gamma$. In this way we identify the inverse limit
  \[
  \mathcal{C} = \varprojlim \left \{ H^1(\Delta, \Z) \suchthat [\Gamma
    : \Delta] < \infty \right \}
  \]
  with a subgroup of $\Comm( \Z \times \Gamma)$. Nontrivial
  commensurations in $\mathcal{C}$ do not virtually extend to
  automorphisms of $\R \times S$. For any finite index subgroup
  $\Delta \leq \Gamma$, we may identify $H^1(\Delta, \Q)$ as a
  subgroup of $\mathcal{C}$. In this way, the virtual first rational
  Betti number of the semisimple quotient of a lattice may be seen as
  an obstruction to the realization of commensurations as
  automorphisms of an algebraic group.
\end{remark}

\bibliography{comm.bib}{}
\bibliographystyle{amsalpha}

\end{document}